\newtheorem{thm}{Theorem} [section]
\newtheorem{thm2}[thm]{Theorem$^\prime$}
\newtheorem{lem}[thm]{Lemma}
\newtheorem{prop}[thm]{Proposition}
\newtheorem{claim}[thm]{Expectation}
\theoremstyle{definition}
\newtheorem{definition}[thm]{Definition}
\newtheorem{example}[thm]{Example}
\theoremstyle{remark}
\newtheorem{rem}[thm]{Remark}
\numberwithin{equation}{section}
\begin{document}

\newcommand{\thmref}[1]{Theorem~\ref{#1}}
\newcommand{\secref}[1]{Section~\ref{#1}}
\newcommand{\lemref}[1]{Lemma~\ref{#1}}
\newcommand{\propref}[1]{Proposition~\ref{#1}}
\newcommand{\corref}[1]{Corollary~\ref{#1}}
\newcommand{\remref}[1]{Remark~\ref{#1}}
\newcommand{\eqnref}[1]{(\ref{#1})}

\newcommand{\exref}[1]{Example~\ref{#1}}

 \newcommand{\GSp}{\mathrm{GSp}}
 \newcommand{\PGSp}{\mathrm{PGSp}}
\newcommand{\PGSO}{\mathrm{PGSO}}
\newcommand{\PGO}{\mathrm{PGO}}
\newcommand{\SO}{\mathrm{SO}}
\newcommand{\GO}{\mathrm{GO}}
\newcommand{\GSO}{\mathrm{GSO}}
\newcommand{\Spin}{\mathrm{Spin}}
\newcommand{\Sp}{\mathrm{Sp}}
\newcommand{\PGL}{\mathrm{PGL}}
\newcommand{\GL}{\mathrm{GL}}
\newcommand{\SL}{\mathrm{SL}}
\newcommand{\U}{\mathrm{U}}
\newcommand{\ind}{\mathrm{ind}}
\newcommand{\Ind}{\mathrm{Ind}}
 \newcommand{\triv}{\mathrm{triv}}
 
\newtheorem{innercustomthm}{{\bf Theorem}}
\newenvironment{customthm}[1]
  {\renewcommand\theinnercustomthm{#1}\innercustomthm}
  {\endinnercustomthm}
  
  \newtheorem{innercustomcor}{{\bf Corollary}}
\newenvironment{customcor}[1]
  {\renewcommand\theinnercustomcor{#1}\innercustomcor}
  {\endinnercustomthm}
  
  \newtheorem{innercustomprop}{{\bf Proposition}}
\newenvironment{customprop}[1]
  {\renewcommand\theinnercustomprop{#1}\innercustomprop}
  {\endinnercustomthm}

\newcommand{\bbinom}[2]{\begin{bmatrix}#1 \\ #2\end{bmatrix}}
\newcommand{\cbinom}[2]{\set{\^!\^!\^!\begin{array}{c} #1 \\ #2\end{array}\^!\^!\^!}}
\newcommand{\abinom}[2]{\ang{\^!\^!\^!\begin{array}{c} #1 \\ #2\end{array}\^!\^!\^!}}
\newcommand{\qfact}[1]{[#1]^^!}

\newcommand{\nc}{\newcommand}

\nc{\Ord}{\text{Ord}_v}

 \nc{\A}{\mathcal A} 
  \nc{\G}{\mathbb G} 
\nc{\Ainv}{\A^{\rm inv}}
\nc{\aA}{{}_\A}
\nc{\aAp}{{}_\A'}
\nc{\aff}{{}_\A\f}
\nc{\aL}{{}_\A L}
\nc{\aM}{{}_\A M}
\nc{\Bin}{B_i^{(n)}}
\nc{\dL}{{}^\omega L}
\nc{\Z}{{\mathbb Z}}
 \nc{\C}{{\mathbb C}}
 \nc{\N}{{\mathbb N}}
 \nc{\R}{{\mathbb R}}
 \nc{\fZ}{{\mf Z}}
 \nc{\F}{{\mf F}}
 \nc{\Q}{\mathbb{Q}}
 \nc{\la}{\lambda}
 \nc{\ep}{\epsilon}
 \nc{\h}{\mathfrak h}
 \nc{\He}{\bold{H}}
 \nc{\htt}{\text{tr }}
 \nc{\n}{\mf n}
 \nc{\g}{{\mathfrak g}}
 \nc{\DG}{\widetilde{\mathfrak g}}
 \nc{\SG}{\breve{\mathfrak g}}
 \nc{\is}{{\mathbf i}}
 \nc{\V}{\mf V}
 \nc{\bi}{\bibitem}
 \nc{\E}{\mc E}
 \nc{\ba}{\tilde{\pa}}
 \nc{\half}{\frac{1}{2}}
 \nc{\hgt}{\text{ht}}
 \nc{\ka}{\kappa}
 \nc{\mc}{\mathcal}
 \nc{\mf}{\mathfrak} 
 \nc{\hf}{\frac{1}{2}}
\nc{\ov}{\overline}
\nc{\ul}{\underline}
\nc{\I}{\mathbb{I}}
\nc{\xx}{{\mf x}}
\nc{\id}{\text{id}}
\nc{\one}{\bold{1}}
\nc{\mfsl}{\mf{sl}}
\nc{\mfgl}{\mf{gl}}
\nc{\ti}[1]{\textit{#1}}
\nc{\Hom}{\mathrm{Hom}}
\nc{\Irr}{\mathrm{Irr}}
\nc{\Cat}{\mathscr{C}}
\nc{\CatO}{\mathscr{O}}
\renewcommand{\O}{\mathrm{O}}
\nc{\Tan}{\mathscr{T}}
\nc{\Umod}{\mathscr{U}}
\nc{\Func}{\mathscr{F}}
\nc{\Kh}{\text{Kh}}
\nc{\Khb}[1]{\llbracket #1 \rrbracket}

\nc{\ua}{\mf{u}}
\nc{\nb}{u}
\nc{\inv}{\theta}
\nc{\mA}{\mathcal{A}}
\newcommand{\TT}{\mathbf T}
\newcommand{\TA}{{}_\A{\TT}}
\newcommand{\tK}{\widetilde{K}}
\newcommand{\al}{\alpha}
\newcommand{\Fr}{\bold{Fr}}

\nc{\Qq}{\Q(v)}
\nc{\uu}{\mathfrak{u}}
\nc{\Udot}{\dot{\U}}

\nc{\f}{\bold{f}}
\nc{\fprime}{\bold{'f}}
\nc{\B}{\bold{B}}
\nc{\Bdot}{\dot{\B}}
\nc{\Dupsilon}{\Upsilon^{\vartriangle}}
\newcommand{\T}{\texttt T}
\newcommand{\vs}{\varsigma}
\newcommand{\Pa}{{\bf{P}}}
\newcommand{\Padot}{\dot{\bf{P}}}

\nc{\ipsi}{\psi_{\imath}}
\nc{\Ui}{{\bold{U}^{\imath}}}
\nc{\uidot}{\dot{\mathfrak{u}}^{\imath}}
\nc{\Uidot}{\dot{\bold{U}}^{\imath}}
 \nc{\be}{e}
 \nc{\bff}{f}
 \nc{\bk}{k}
 \nc{\bt}{t}
 \nc{\bs}{\backslash}
 \nc{\BLambda}{{\Lambda_{\inv}}}
\nc{\Ktilde}{\widetilde{K}}
\nc{\bktilde}{\widetilde{k}}
\nc{\Yi}{Y^{w_0}}
\nc{\bunlambda}{\Lambda^\imath}
\newcommand{\Iwhite}{\I_{\circ}}
\nc{\ile}{\le_\imath}
\nc{\il}{<_{\imath}}

\newcommand{\ff}{B}


\nc{\etab}{\eta^{\bullet}}
\newcommand{\Iblack}{\I_{\bullet}}
\newcommand{\wb}{w_\bullet}
\newcommand{\UIblack}{\U_{\Iblack}}

\newcommand{\blue}[1]{{\color{blue}#1}}
\newcommand{\red}[1]{{\color{red}#1}}
\newcommand{\green}[1]{{\color{green}#1}}
\newcommand{\white}[1]{{\color{white}#1}}

\newcommand{\dvd}[1]{t_{\odd}^{{(#1)}}}
\newcommand{\dvp}[1]{t_{\ev}^{{(#1)}}}
\newcommand{\ev}{\mathrm{ev}}
\newcommand{\odd}{\mathrm{odd}}

\newcommand\TikCircle[1][2.5]{{\mathop{\tikz[baseline=-#1]{\draw[thick](0,0)circle[radius=#1mm];}}}}

\newcommand{\commentcustom}[1]{}

\raggedbottom

\title[Generalised Whittaker models and relative Langlands duality]
{Generalised Whittaker models as instances of relative Langlands duality}

\author{Wee Teck Gan}
\author{Bryan Wang Peng Jun}
 \address{Department of Mathematics, National University of Singapore, 10 Lower Kent Ridge Road, Singapore 119076}
\email{matgwt@nus.edu.sg}
\email{bwang@u.nus.edu}

\begin{abstract}
The recent proposal by Ben-Zvi, Sakellaridis and Venkatesh of a duality in the relative Langlands program, leads, via the process of quantization of Hamiltonian varieties, to a duality theory of branching problems. This often unexpectedly relates two \textit{a priori} unrelated branching problems. We examine how the generalised Whittaker (or Gelfand-Graev) models serve as the prototypical example for such branching problems. We give a characterization, for the orthogonal and symplectic groups, of the generalised Whittaker models possibly contained in this duality theory. We then exhibit an infinite family of examples of this duality, which, provably at the local level via the theta correspondence, satisfy the conjectural expectations of duality.
\end{abstract}

\maketitle

\setcounter{tocdepth}{1}
\tableofcontents

\section{Introduction}

One of the central themes of the relative Langlands program is to characterize the non-vanishing of certain periods of automorphic forms on a reductive group $G$ (relative to a subgroup $H$ of $G$), and when the period is non-zero, to relate it  to certain (special values of) automorphic L-functions. The corresponding problem at the local level is the $H$-distinction problem: the classification of irreducible smooth representations of $G$ which possess  nonzero $H$-invariant functionals. This $H$-distinction problem is
 equivalent, by Frobenius reciprocity, to the classification of the irreducible $G$-submodules of $C^{\infty}(X)$ with $X = H\backslash G$; we shall call these the $X$-distinguished representations. One may also consider the corresponding $L^2$-problem, in which case one is interested in determining the spectral decomposition of the unitary representation $L^2(X)$. In any of these settings, the expectation is that the $X$-distinguished representations are obtained as Langlands functorial lifts from another (typically smaller) group. 
 \vskip 5pt
 
 \subsection{\bf Spherical varieties and \cite{SV}}
 More precise predictions for these problems were laid out in \cite{SV} in the case when $H$ is a spherical subgroup of $G$, so that
$X= H\bs G$ is a ($G$-homogeneous) \textit{spherical} variety. The main reason for singling out spherical subgroups is the expectation that the spectral decomposition in question will be multiplicity-free (or at least has finite multiplicities). In the spirit of the Langlands philosophy, \cite{SV} associates to the spherical variety $X$ the following dual data:
\vskip 5pt
\begin{itemize}
\item a Langlands dual group $X^{\vee}$ and a map
\[  \iota_X: X^{\vee} \times {\rm SL}_2(\C) \longrightarrow G^{\vee}.\]
\vskip 5pt

\item a (graded) finite-dimensional (typically) symplectic representation $V_X$ of $X^{\vee}$.
\end{itemize}
It was then conjectured that the $X$-distinguished representations (of Arthur type) are those whose A-parameters factor through the map $\iota_X$, thus making precise the group from which the Langlands functorial lifting originates. The representation $V_X$ of $X^{\vee}$ is the main ingredient allowing one to form the automorphic L-function which controls the relevant period. For a more detailed discussion of this, the reader can consult \cite{SV} or the introduction of \cite{GaWa}. 
\vskip 5pt

There are however plenty of important examples which appear to fit into the framework of the relative Langlands program, but which do not arise from the usual class of spherical varieties. One example is the Bessel and Fourier-Jacobi models \cite{GGP}, or more generally, generalised Whittaker models with a Whittaker-twisted component arising from a nilpotent orbit in $G$. Another example is Howe duality (or theta correspondence) \cite{Sa2}, concerning the decomposition of the Weil representation which may be thought of as arising from a symplectic vector space.  In each of these problems, one encounters a natural $G$-module whose spectral decomposition is multiplicity-free, but the $G$-module is not of the form $C^{\infty}(X)$ with $X$ a spherical variety.  
\vskip 10pt

\subsection{\bf Hyperspherical varieties and \cite{BZSV}}
The above considerations led Ben-Zvi, Sakellaridis and Venkatesh to investigate the natural setting for the relative Langlands program.  
Partly motivated by the work of Kapustin-Witten \cite{KW} interpreting geometric Langlands duality as an electric-magnetic duality of (four-dimensional) topological quantum field theories (TQFT) and the work of Gaiotto-Witten \cite{GaWi} on the boundary conditions of these TQFT, they propose in their recent paper \cite{BZSV} a broader framework for the relative Langlands program.

\vskip 5pt

According to their new proposal, the basic objects considered by the relative Langlands program should be a class of \textit{Hamiltonian $G$-varieties} $M$ called \textit{hyperspherical varieties}. From this point of view, instead of considering spherical varieties $X$, one should consider instead its cotangent variety $M = T^*(X)$.
By the process of quantization (broadly construed), these hyperspherical $G$-varieties give rise to unitary $G$-representations whose spectral decomposition is what the relative Langlands program should be concerned with. This point of view reconnects us with the classical philosophy of (geometric) quantization as a means to study the representation theory of Lie groups, a process which itself has its roots in the development of quantum mechanics.  \vskip 5pt

We recall the basic definition of a hyperspherical $G$-variety in Section \ref{sec:RelativeLanglandsDuality}. A key result shown in \cite{BZSV} is a structure theorem for such varieties. It turns out that any hyperspherical $G$-variety can be built out of the following initial data:
\vskip 5pt

\begin{itemize}
\item a map 
\[  \iota: H \times {\rm SL}_2 \longrightarrow  G \]
with $H \subset Z_G(\iota({\rm SL}_2))$ a spherical subgroup;
\vskip 5pt

\item a finite-dimensional symplectic representation $S$ of $H$.
\end{itemize}
Given these initial data, the corresponding hyperspherical $G$-variety $M$ is built up  by the process of 
 `Whittaker induction' of the symplectic $H$-vector space $S$ along the homomorphism $\iota$, which is an instance  of Hamiltonian reduction \`a la Marsden-Weinstein.
 The quantizations of these hyperspherical $M$'s capture the aforementioned examples of  the space of smooth functions on spherical varieties,  the generalised Whittaker models  and the Weil representation.  In fact, these  account for the main prototypical examples of the quantizations of hyperspherical varieties, with the general case built up by the amalgam of these three cases.  Thus, this enlarged framework of the relative Langlands program captures all the known examples that one would like to include. 
 
 \vskip 10pt
 
 \subsection{\bf BZSV Duality}
 Observe that the initial data  
 \[  (\iota: H \times {\rm SL}_2 \rightarrow G, S) \]
 used in the construction of a hyperspherical variety is very similar to the key data 
 \[   (\iota_X: X^{\vee} \times {\rm SL}_2 \rightarrow G^{\vee}, V_X) \]
 used in the formulation of the conjecture of \cite{SV} recalled above. If one were to apply the process of Whittaker induction to the latter data, one will get a hyperspherical $G^{\vee}$-variety $M^{\vee}$ (over $\C$). 
 \vskip 5pt
 
 Now another novel realization in \cite{BZSV} is that the conjecture of \cite{SV} on the classification of $X$-distinguished representations can be elegantly reformulated in terms of $M^{\vee}$. Namely, the $X$-distinguished representations (of Arthur type) are those whose A-parameters have nonempty fixed point set on $M^{\vee}$.  This suggests that not only should the basic objects in the relative Langlands program be these hyperspherical $G$-varieties $M$'s, the dual objects describing the solution of the spectral problem arising from $M$ should also be hyperspherical varieties, for the Langlands dual group $G^{\vee}$.  
 
 \vskip 5pt
 Pursuing this train of thought further,  \cite{BZSV} suggested that there should exist an involutive theory of \textit{duality} of such hyperspherical varieties, 
\[G\circlearrowright M\longleftrightarrow M^\vee \circlearrowleft G^\vee,\]
 relating two \textit{a priori} unrelated instances of the relative Langlands program, namely the spectral problems associated to the quantizations of $M$ and $M^{\vee}$. From the viewpoint of the geometric Langlands program,
they view this proposed relative Langlands duality as a classical manifestation of  a duality of boundary conditions of the TQFT's arising in the work of Kapustin-Witten and Gaiotto-Witten. However, there is as yet no firm definition of this purported theory of duality. 

As an instance of this purported duality, the trivial $G$-space (consisting of a single point) is dual to the Whittaker (twisted) cotangent bundle of $G^{\vee}$. 
Another striking instance of duality is that the hyperspherical variety underlying the branching problem occurring in the GGP conjecture \cite{GGP}, is dual to that underlying the theta correspondence, which is just a symplectic vector space acted upon by the corresponding reductive dual pair (Remark \ref{rem:NoIsotypic2}).
 \vskip 5pt

Let us remark also that it is expected in \cite{BZSV} that the duality theory should extend to a much wider class of Hamiltonian spaces, such as non-smooth spaces, or spaces which are not varieties (e.g. stacks, or derived schemes). In this paper, we restrict ourselves to the definition of `hyperspherical' as given in \cite{BZSV}, for which there exists a reasonable structure theory and formulation of expectations for the duality. 

\vskip 10pt

\subsection{\bf The result of this paper}\label{sec:IntroResult}
 In this paper, we consider mainly the special case of a hyperspherical variety $M$ whose initial data satisfies
 \[  H = Z_G(\iota({\rm SL}_2)) \quad \text{and} \quad S = 0. \] 
 (We also consider $S\ne 0$ in some cases, namely when the associated nilpotent conjugacy class, as below, is non-even. The choice of $S$ is largely dictated by the choice of nilpotent conjugacy class.)
 \vskip 5pt
 Such a $M$ is thus determined by a homomorphism
 \[  \iota: {\rm SL}_2 \longrightarrow G \]
 which, by the Jacobson-Morozov theorem, is associated to a nilpotent conjugacy class 
 \[  e = d\iota\left( \begin{array}{cc} 0 & 1 \\
0 & 0 \end{array} \right) \in \mathfrak{g} = {\rm Lie}(G). \]
 The obtained hyperspherical variety $M_e$ can be described more explicitly as 
 \[  M_e =  ( (f + \mathfrak{g}^e) \cap \mathfrak{h}^\perp) \times^H G  \]
 where
 \[  f =  d\iota \left( \begin{array}{cc}
  0 & 0 \\
  1 & 0 \end{array} \right),  \qquad \mathfrak{g}^e = {\rm Ker}({\rm ad}(e)) \quad \text{and} \quad  \mathfrak{h} = {\rm Lie}(H). \]
  Thus, $M_e$ is built from the Slodowy slice associated to  $e$ and  
 the corresponding quantization $\Pi_e$ of $M_e$ is  an instance of the  generalized Whittaker (or Gelfand-Graev) models. 
 \vskip 5pt
 
 After this preparation, we can describe the main results of this paper:
 \vskip 5pt
 
 \begin{itemize}
 \item[(a)]  Our first result gives a characterization of those $e$'s for
  the orthogonal and symplectic groups, which could possibly give rise to hyperspherical varieties. The precise statements can be found in Theorem \ref{thm:OrthogonalClassification} and  Theorem \ref{thm:SymplecticClassification}. In fact, the same upper bound for the possible $e$'s holds, regardless of the choice of $S$.  
  \vskip 5pt
  
  For example, an infinite family of such $e$'s consists of those whose corresponding partition has associated Young diagrams of hook type.
  The corresponding generalized Whittaker models are the so-called Bessel and Fourier-Jacobi models that one encounters in the GGP conjecture.
  \vskip 5pt
  
  \item[(b)]  Our second result determines the hyperspherical dual  $M_e^{\vee}$ of $M_e$, for some of those $e$'s in (a), in particular for all those $e$'s of hook type.  The precise statements can be found in Theorem \ref{thm:EvenOrthogonalMain} and  Theorem  \ref{thm:OddOrthogonalMain}. For these $e$'s of hook type, it turns out that 
  \[  M_e^{\vee} \cong M_{e^{\vee}} \] 
  for some $e^{\vee}$ which is also of hook type.  
  \end{itemize}
  \vskip 5pt
  
  Because there is no formal definition of the duality $M \longleftrightarrow M^{\vee}$,  let us explain what we mean by (b) above. 
  \vskip 5pt
  
  In this paper, when we say that a hyperspherical $G$-variety $M$  with associated data $(\iota: H \times {\rm SL}_2 \rightarrow  G, S)$ is dual to a hyperspherical $G^{\vee}$-variety $M^{\vee}$ with associated data $(\iota^{\dagger}:  H^{\dagger} \times {\rm SL}_2 \rightarrow G^{\vee}, S^{\dagger})$, we mean that the following two statements hold:
  \vskip 5pt
  
  \begin{itemize}
  \item The irreducible representations of $G$ of Arthur type which intervene in the spectral decomposition of the quantization $\Pi_M$ of $M$ have A-parameters factoring through the map $\iota^{\dagger}$;
  \vskip 5pt
  
  \item The irreducible representations of $G^{\vee}$ of Arthur type which intervene in the spectral decomposition of the quantization $\Pi_{M^{\vee}}$ of $M^{\vee}$ have A-parameters factoring through the map $\iota$;
    \end{itemize}
  In other words, in establishing the results highlighted in (b) above, we are solving a pair of branching problems and showing that their answers can be described in terms of each other. The main tool used in our proof of (b) above is  the theta correspondence, and in particular  a result of Gomez and Zhu (\cite{GZ}, \cite{Zh}) which relates generalised Whittaker models via the theta correspondence, and is a manifestation of the more general principle that the theta correspondence often relates two periods on each member of a dual pair. 

  Finally, let us remark that there is nothing essential about the choice to focus only on orthogonal and symplectic groups in this paper; one could obtain similar results for the general linear and exceptional groups by similar methods, that is, the unitary and exceptional theta correspondences respectively. 
  

\subsection{Duality and symplectic reduction}

The proof of our main results above (using the theorems of Gomez-Zhu) has an underlying geometric interpretation, which is a manifestation of the following principle that we learned from a suggestion of Venkatesh: \[ \textit{Hyperspherical duality `commutes' with symplectic reduction.} \]

More precisely, one expects a diagram of the form \[ \begin{tikzcd}[ampersand replacement=\&, row sep=large,column sep=huge, every label/.append
style={font=\normalsize}, outer sep=5pt]
M_1' \arrow{d}[swap]{\begin{array}{@{}c@{}}\text{symplectic reduction} \\ \text{with } \{0\} \end{array}}\arrow[r,leftrightarrow,"\text{duality}"] \& M_2' \arrow[d," \begin{array}{@{}c@{}}\text{Whittaker reduction} \end{array}"] \\
M_1 \arrow[r,leftrightarrow,"\text{duality} "] \& M_2
\end{tikzcd} \]

As a somewhat trivial illustration of this principle, consider for instance the hyperspherical duality in the `group case' \cite{BZSV}, between the $(G\times G)$-variety $T^\ast G$ and the $(G^\vee\times G^\vee)$-variety $T^\ast G^\vee$ (with one factor twisted by the Chevalley involution). Now symplectic reduction of $T^\ast G$ gives the trivial $G$-variety, whose dual is the Whittaker cotangent bundle for $G^\vee$, which can be obtained by Whittaker reduction of $T^\ast G^\vee$. 

As we will explain in Section \ref{sec:DualitySymplecticReduction}, the proof of our main results above can in fact be interpreted as a quantization of this geometric principle, taking $M_1’, M_2’$ as the hyperspherical dual pair associated with the GGP problem and the theta correspondence.

\subsection{Organisation of the paper} The paper is organised as follows. In Section \ref{sec:NilpOrbits} we set up the necessary preliminaries on nilpotent orbits and associated objects, and introduce the generalised Whittaker models. After reviewing preliminaries regarding Hamiltonian spaces and quantization in Section \ref{sec:HamiltonianQuantization}, in Section \ref{sec:RelativeLanglandsDuality} we introduce the relative Langlands duality of hyperspherical varieties, and explain how the generalised Whittaker models fit into the framework of hyperspherical varieties. 

In Section \ref{sec:HypersphericalClassification} we give a characterization of the generalised Whittaker models which could possibly satisfy the hyperspherical assumption. The rest of the paper is devoted to studying the cases which arise from Section \ref{sec:HypersphericalClassification}: In Section \ref{sec:ThetaCorrespondence}, we review the theory and results surrounding the theta correspondence, in preparation for Section \ref{sec:HookType}, where we study the `hook-type' generalised Whittaker models, which provide a class of examples of hyperspherical dual pairs. In Section \ref{sec:DualitySymplecticReduction}, we consider related hyperspherical dual pairs (for the group $G\times H$) \cite{FU}, and in doing so examine how hyperspherical duality interacts with the operation of symplectic reduction. Finally, in Section \ref{sec:ExceptionalPartitions} we make some brief remarks on the exceptional partitions which arise from the characterization of Section \ref{sec:HypersphericalClassification}. 

\subsection{Notation and conventions} Throughout, let $F$ be a (non-Archimedean) local field of characteristic 0, and fix a non-trivial unitary character $\psi : F\rightarrow \C^\times$. $G$ will always denote a (split) reductive group, and we work with split forms of classical groups, unless stated otherwise. Unless otherwise specified we work throughout with smooth admissible representations. We denote by $\ind$ and $\Ind$ the (unnormalized) compact induction and induction of representations respectively. We work over $\C$ (or an algebraically closed field of characteristic zero) when dealing with hyperspherical varieties and their duals, and work over $F$ otherwise. 

 \subsection{Acknowledgements} We would like to thank Yiannis Sakellaridis and Akshay Venkatesh for illuminating discussions about \cite{BZSV} during the course of this work and a number of very helpful comments on the paper. The first author thanks Ivan Losev for pointing out the reference \cite{FU} at the Nisyros conference 2023. The second author would also like to thank Nhat Hoang Le for some helpful conversations while the work on this paper was ongoing. 
W.T. Gan is partially supported by a Singapore government MOE Tier 1 grant R-146-000-320-114 and a Tan Chin Tuan Centennial Professorship.

\section{Nilpotent orbits and generalised Whittaker models}\label{sec:NilpOrbits}

\subsection{Preliminaries on nilpotent orbits}

We first review preliminaries concerning nilpotent orbits (Dynkin-Kostant theory), mainly to fix notation and to highlight the similarities between the structure theory of hyperspherical varieties and the formation of generalised Whittaker models. A reference for the relevant theory is \cite{CM}.

Fix $\kappa$, an ${\rm Ad}(G)$-invariant non-degenerate bilinear form on $\g$. Let $\gamma=\{e,h,f\}\subset \g$ be an $\mfsl_{2}$-triple associated to a nilpotent orbit of $\g$. 

\begin{rem}
By the Jacobson-Morozov theorem (and other results of Kostant) \cite{CM}, there is a correspondence between (conjugacy classes of) $\mfsl_2$-triples and nilpotent orbits; as such, in this paper, we will often refer to $\mfsl_2$-triples and their corresponding nilpotent orbits interchangeably, with no confusion to be expected for the reader. 
\end{rem}

Under the adjoint $\mfsl_2$-action, $\g$ decomposes into $\mfsl_2$ weightspaces \[\g_j=\{v\in \g \mid {\rm ad}(h)v=jv\}\] for $j\in \Z$. We have the parabolic \[\mf p=\oplus_{j\ge 0} \g_{j} = \mf l \oplus \mf u,\] where $\mf l = \g_0$. Set $\mf u^+:=\oplus_{j\geq 2} \g_{j}$.

We get corresponding subgroups $P=L\ltimes U$ and $U^+$  of $G$. Note \[ L=\{l\in G \mid \mbox{${\rm Ad}(l)h=h$}\}\] is the stabiliser of $h$. Denote the centraliser of $\gamma$ by \[M_\gamma=\{l\in L \mid \mbox{${\rm Ad}(l)e=e$}\}=\{g\in G \mid \mbox{${\rm Ad}(g)e=e$, ${\rm Ad}(g)f=f$, ${\rm Ad}(g)h=h$}\},\] which is reductive. 

We define a character $\chi_{\gamma,\psi}$ on $U^+$ via 
\begin{equation}
\label{defchi}
\chi_{\gamma,\psi}(\exp u):=\psi (\kappa(f,u)), \ \ \ \ \forall \ u\in \mf u^+.
\end{equation}
Denote also \[ \kappa_f(u) := \kappa(f,u). \]
Since $\psi$ is fixed, in what follows we will drop the subscript and simply write $\chi_\gamma$.

\subsection{Classification of nilpotent orbits}\label{NilpPartition} Suppose now $G$ is the isometry group of a $n$-dimensional vector space $V$ equipped with an orthogonal or symplectic form $B$ over $F$. 

From an $\mfsl_2$-triple as above, we obtain an $\mfsl_2$-representation on $V$ and the decomposition $V=\oplus_{j=1}^l V^{(j)}$, where \[ V^{(j)}=W_j^{\oplus a_j} \cong W_j \otimes V_j \] is the isotypic component of $V$ for the irreducible $j$-dimensional representation $W_j$ of $\mfsl_2$, and $V_j$ is a $a_j$-dimensional multiplicity space. 

Recall, from standard $\mfsl_2$-theory, that $W_j$ is symplectic (resp. orthogonal) if $j$ is even (resp. odd); fix corresponding $\mfsl_2$-invariant forms $A_j$ on $W_j$.

The form $B$ induces a symplectic or orthogonal form $B_j$ on the $a_j$-dimensional multiplicity spaces $V_j$. $B_j$ is symplectic if $j$ is even and $B$ is orthogonal or if $j$ is odd and $B$ is symplectic, and otherwise $B_j$ is orthogonal. 

$M_\gamma$ is in fact (isomorphic to) the direct product of the isometry groups of $(V_j,B_j)$; we denote \[ M_\gamma \cong \prod_{j=1}^l G(V_j,B_j). \] 

The above furnishes a parameterisation of the nilpotent orbits in $G$, by the datum of:
\begin{itemize}
    \item the partition $\lambda=[l^{a_l},\dots, 1^{a_1}]$ of $n$, and
    \item the forms on the multiplicity spaces $(V_j,B_j)$,
\end{itemize} 
such that the $(V_j,B_j)$ are compatible with $B$ in the above way; to be precise, this means that the $B_j$ must be the forms that would be induced from $B$ as above, or that \[ \bigoplus_j (V_j, B_j) \otimes (W_j, A_j) \cong (V, B). \]

In particular if $G$ is an orthogonal group, then even parts must occur with even multiplicity in $\lambda$, and if $G$ is symplectic, then odd parts must occur with even multiplicity in $\lambda$. 

\subsection{Generalised Whittaker models} 

We now define the generalised Whittaker representations $W_\gamma$ associated to the nilpotent orbit $\gamma$ and the associated generalised Whittaker models. These are also called generalised Gelfand-Graev representations and models (in analogy with the finite field case), but we use the name Whittaker in keeping with the rest of the paper.  

\subsubsection{Even nilpotent orbits}\label{evengamma} The most typical case we will consider is when $U=U^+$, that is, the nilpotent orbit under consideration is \textit{even}. 

\begin{definition}
In this case, we denote 
\begin{equation}
\label{defevenW}
W_{\gamma,\psi} := \ind^G_{M_\gamma U} \chi_\gamma
\end{equation}
with the trivial $M_\gamma$ action on $\chi_\gamma$. 
Denote also for $\pi\in \Irr(G)$ \[ W_{\gamma,\psi}(\pi):=\Hom_G (\ind^G_{M_\gamma U} \chi_\gamma, \pi^\vee) \cong \Hom_G (\pi, \Ind^G_{M_\gamma U} \chi_\gamma) \cong \Hom_{M_\gamma U} (\pi, \chi_\gamma)  \] 
This is called the space of generalised Whittaker functionals of $\pi$ (associated to $\gamma$). 
\end{definition}

For $\gamma$ a regular nilpotent orbit, it is easy to check that the character $\chi_\gamma$ of $U$ is generic (in the sense that its stabiliser is as small as possible, among all unitary characters of $U$). In fact $\chi_\gamma$ is generic for all even nilpotent orbits $\gamma$.

\subsubsection{Non-even nilpotent orbits}\label{notevengamma} If $\g_1\ne 0$, then we have an isomorphism
${\rm ad}(f)|_{\g_{1}}:\g_{1}\longrightarrow \g_{-1}$ coming from $\mfsl_2$-theory,
which allows us to transfer the non-degenerate pairing $\kappa$ between $\g_{-1}$ and $\g_1$ to a symplectic structure $\kappa_1$ on $\g_{1}$. Precisely, it is as follows:
\begin{equation}
\label{defsymg-1}
\kappa_{1}(v,w)=\kappa({\rm ad}(f)v,w)=\kappa(f,[v,w]), \qquad \mbox{for all  $v$, $w\in \g_{1}$}.
\end{equation}

Note that $\mf u/\mf u^+$ hence carries a $M_\gamma$-invariant symplectic form $\kappa_1$. 

Now consider $H_{\gamma}$ (not to be confused with the use of $H$ for a subgroup of $G$), the Heisenberg group associated to the symplectic $(\g_{1}, \kappa_{1})$. That is, $H_{\gamma}=\g_{1}\times F$, with $F$ central, and $(v,0)(w,0)=(v+w,\kappa _{1}(v,w)/2)$ for all $v$, $w \in \g_{1}$.

We have a group homomorphism $\alpha_{\gamma}:U\rightarrow H_{\gamma}$ given by
\[
\alpha_{\gamma}(\exp v \exp u)=(v,\kappa(f,u)), \qquad \mbox{for all $v\in \g_{1}$, $u\in \mf u^+$}. \label{eq:alphagammadefinition}
\]

Denoting $U' = \exp(\ker (\kappa_f|_{\mf u^+})) = \ker(\alpha_{\gamma}) $, we may think of this as exhibiting the structure of $U/U'$ as a Heisenberg group with center $U^+/U'$. 

If $\omega_\psi$ is the unique (by the Stone-von Neumann theorem) (smooth, irreducible, unitary) representation of $H_\gamma$ such that its center acts by $\psi$, then note that the action of $U^+/U'$ is given by \[
(\exp u)v=(0,\kappa(f,u))v=\psi(\kappa(f,u))v=\chi_{\gamma}(\exp u)v \qquad \mbox{for all  $u\in\mf u^+$,$v\in \omega_\psi$},
\]
i.e. it acts by the character $\chi_\gamma$. 

Since $M_\gamma$ preserves the symplectic form $\kappa_1$, in similar manner to how the Weil representation is constructed from the representation $\omega_\psi$ of the Heisenberg group, here, we have a representation on $\omega_\psi$, of some central cover of $M_\gamma$ which we denote as $\tilde{M_\gamma}$. It is the pre-image of $M_\gamma$ in ${\rm Mp}(\g_1)$. 

For a genuine representation $\rho$ of $\tilde{M_\gamma}$ (with $U$ acting trivially), $\rho\otimes \omega_\psi$ descends to an actual representation of $M_\gamma U$ (if we are working with the metaplectic cover as a double cover). 

\begin{definition}\label{def:GenWhittakerNoneven}
We denote
\begin{equation}
\label{defgeneralW}
W_{\gamma,\rho,\psi} := \ind^G_{M_\gamma U} \rho\otimes\omega_\psi
\end{equation}
and
\[ W_{\gamma,\rho,\psi}(\pi):=\Hom_G (\ind^G_{M_\gamma U} \rho\otimes\omega_\psi, \pi^\vee) \cong \Hom_G (\pi, \Ind^G_{M_\gamma U} \rho\otimes\omega_\psi) \cong \Hom_{M_\gamma U} (\pi, \rho\otimes\omega_\psi)  \] 

This is called the generalised Whittaker model of $\pi$ (associated to $\gamma$ and $\rho$). 

More generally, $\rho$ may be a (genuine) representation of $\tilde{H}$ for $H$ a reductive subgroup of $M_\gamma$, and we may form the corresponding $W_{\gamma,\rho,\psi}$ and $W_{\gamma,\rho,\psi}(\pi)$ with $H$ in place of $M_\gamma$. In most applications, $H$ will be a relatively big (e.g. finite index) subgroup of $M_\gamma$. 
\end{definition}

Note that in the case of even nilpotent orbits (Section \ref{evengamma}), we have $\tilde{M_\gamma}=M_\gamma$, $\omega_\psi=\chi_\gamma$, and we have essentially taken $\rho$ to be the trivial representation, so that we may henceforth use the same notation for both cases. 

\begin{rem}\label{rem:NoIsotypic}
    The preceding discussion suggests that one may omit the representation $\rho$ of $\tilde{M_\gamma}$ and instead consider the generalised Whittaker model as a representation of $G\times \tilde{M_\gamma}$, and indeed one commonly does so, for instance when dealing with the Bessel and Fourier-Jacobi models. See Remarks \ref{rem:NoIsotypic3} and \ref{rem:NoIsotypic2} for further discussion on this aspect. 
\end{rem}

Finally, since $\psi$ is fixed throughout this paper, where there is no danger of confusion we will sometimes drop the subscripts $_\psi$. 

\subsubsection{The choice of $\rho$}\label{sec:FourierJacobiInduction} Observe that in the even orbit case, there is a natural `canonical' choice of $\rho$: the trivial one. In the non-even case, one would ideally also like to have a `canonical' choice of $\rho$ and hence `canonical' choice of generalised Whittaker model. Conceptually, this should be achieved by choosing the `smallest'\footnote{in the sense of Gelfand-Kirillov dimension} possible $\rho$. 

It is probably not instructive to make such a choice explicit in full generality, since such a choice will depend, for instance, on whether the induced cover $\tilde{M_\gamma}$ is split or not. 

Therefore, in this subsection, we will record such a choice in one especially pertinent case to be considered in this paper (the `hook-type' case for symplectic groups, cf. Section \ref{sec:HookType}). More precisely, this is the case where $M_\gamma$ (or a subgroup $H\subseteq M_\gamma$) is precisely isomorphic to the symplectic group $\Sp(\g_1)$. 

In this case, $\rho$ will be taken to be the dual of the Weil representation $\omega_\psi^\vee$, of the metaplectic cover ${\rm Mp}(\g_1)$ (which will also allow us to work with the metaplectic cover as an $S^1$-cover), with again $U$ acting trivially.

One then has:

\begin{lem}\label{lem:FourierJacobiInduction}
As representations of $M_\gamma U$, \[ \omega_\psi^\vee \otimes \omega_\psi \cong \ind_{M_\gamma U^+}^{M_\gamma U} \chi_\gamma,\] with $M_\gamma$ acting trivially on $\chi_\gamma$. The isomorphism is given by formation of matrix coefficients: \[ s^\vee \otimes s \mapsto \big(mu \mapsto \langle s^\vee,  mu\cdot s \rangle\big)\] for all $s^\vee \in \omega_\psi^\vee, s \in \omega_\psi, mu \in M_\gamma U$.
\end{lem}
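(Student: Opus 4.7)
The plan is to show that the matrix coefficient map, call it $\Phi$, defined by $\Phi(s^\vee \otimes s)(mu) := \langle s^\vee, mu \cdot s\rangle$, is an $M_\gamma U$-equivariant isomorphism. The central ingredients are the Stone-von Neumann theorem for the Heisenberg quotient $U/U'$ (so that $\omega_\psi$ is the unique irreducible representation of central character $\chi_\gamma$) and the $\tilde{M_\gamma}$-invariance of the canonical pairing between $\omega_\psi^\vee$ and $\omega_\psi$.

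First I would verify that $\Phi(s^\vee \otimes s)$ lies in $\ind_{M_\gamma U^+}^{M_\gamma U} \chi_\gamma$. The $U^+$-transformation law is immediate from the action of $U^+$ on $\omega_\psi$ through $\chi_\gamma$, combined with the key fact that $M_\gamma$ stabilizes $\chi_\gamma$ (since $M_\gamma$ centralizes the $\mathfrak{sl}_2$-triple and $\kappa$ is $M_\gamma$-invariant, giving $\chi_\gamma(m^{-1} u^+ m) = \chi_\gamma(u^+)$). The trivial left-transformation under $M_\gamma$ follows from the $\tilde{M_\gamma}$-invariance of the pairing, which absorbs any left $M_\gamma$-factor. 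For $M_\gamma U$-equivariance with respect to right translation on the induced side: the $U$-part is direct from the formula $\Phi(u_0 \cdot (s^\vee \otimes s)) = \Phi(s^\vee \otimes u_0 s)$ (since $U$ acts on the $\omega_\psi$-factor only), while the $M_\gamma$-part uses the same invariance together with the compatibility between conjugation by $M_\gamma$ on $U$ and right translation on the induced model.

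For bijectivity, injectivity follows from non-degeneracy of matrix coefficients: if $\Phi(\sum_i s_i^\vee \otimes s_i)$ vanishes identically on $U$, then the irreducibility of $\omega_\psi$ as a representation of the Heisenberg group $U/U'$ (Stone-von Neumann) together with the non-degeneracy of the canonical pairing forces the element to vanish. For surjectivity, Frobenius reciprocity yields
\[ \Hom_{M_\gamma U}(\omega_\psi^\vee \otimes \omega_\psi, \ind_{M_\gamma U^+}^{M_\gamma U} \chi_\gamma) \cong \Hom_{M_\gamma U^+}(\omega_\psi^\vee \otimes \omega_\psi, \chi_\gamma), \]
which is one-dimensional, generated by the canonical evaluation pairing (the natural $\chi_\gamma$-eigenvector in $\omega_\psi^\vee \otimes \omega_\psi$ under $U^+$ together with its $M_\gamma$-invariance). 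Since $\Phi$ corresponds, up to scalar, to this generator, and is injective, it must exhaust the image, giving the isomorphism.

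The main obstacle is the careful tracking of the metaplectic cover $\tilde{M_\gamma}$: although $\omega_\psi$ and $\omega_\psi^\vee$ are genuine only as $\tilde{M_\gamma}$-representations, the tensor product descends to an honest $M_\gamma$-representation (as the metaplectic cocycles cancel), and all equivariance statements must honor this descent. A cleaner alternative, if one prefers, is to fix a polarization $\g_1 = L \oplus L^*$, realize both sides as Schwartz spaces on $\g_1$, and verify the isomorphism directly using Fourier analysis on the Heisenberg group.
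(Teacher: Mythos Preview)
Your equivariance and injectivity arguments are sound, but the surjectivity argument has a genuine gap. The Frobenius reciprocity you invoke is the one for $\Ind$, not for the compact induction $\ind$ appearing in the statement, so the displayed isomorphism of $\Hom$-spaces is not the one you want. More seriously, even granting it, the space $\Hom_{M_\gamma U^+}(\omega_\psi^\vee\otimes\omega_\psi,\chi_\gamma)$ is \emph{not} one-dimensional: since $U^+$ already acts by $\chi_\gamma$, this reduces to $\Hom_{\tilde{M_\gamma}}(\omega_\psi,\omega_\psi)$, and the Weil representation of ${\rm Mp}(\g_1)$ splits as $\omega_\psi^+\oplus\omega_\psi^-$, giving a two-dimensional endomorphism algebra. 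Finally, and most importantly, even if the $\Hom$-space were one-dimensional, knowing that $\Phi$ is the (essentially unique) nonzero intertwiner and that $\Phi$ is injective does \emph{not} force $\Phi$ to be surjective; a proper embedding is perfectly compatible with both facts. Your phrase ``it must exhaust the image'' is a non-sequitur.

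The fix is exactly your proposed alternative, and this is what the paper does: one identifies both sides explicitly with the Schwartz space $\mathscr{S}(\g_1)$. On the left, the standard fact (cited in the paper from \cite{Pr}) that $\omega_\psi^\vee\otimes\omega_\psi\cong\mathscr{S}(\g_1)$ as $\Sp(\g_1)$-representations, with the isomorphism given by matrix coefficients; on the right, $\ind_{M_\gamma U^+}^{M_\gamma U}\chi_\gamma$ restricts (since $M_\gamma U^+\backslash M_\gamma U\cong U^+\backslash U\cong\g_1$) to $\ind_{U^+}^{U}\chi_\gamma\cong\mathscr{S}(\g_1)$, with the $U$-action computed as in \cite{Li}. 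One then checks that the two identifications are furnished by the same matrix-coefficient map. Surjectivity is thus obtained by the concrete Schr\"odinger-model computation rather than by an abstract multiplicity argument.
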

\begin{proof}
For the action of $M_\gamma$, one simply notes the standard fact \cite[Remark 2.11]{Pr} that  \[ \omega_\psi^\vee \otimes \omega_\psi \cong \mathscr{S}(\g_1) \] as representations of $\Sp(\g_1)$, where $\mathscr{S}(\g_1)$ denotes the space of (locally constant) functions on $\g_1$ with compact support, and $U/U^+\cong \mf{u}/\mf{u}^+ \cong \g_1$. 

Similarly for the action of $U$, this follows from \cite[Proposition 3.2.11]{Li}. 

It is straightforward to check that the respective isomorphisms for the $M_\gamma$- and $U$- actions are furnished by the same (natural) map of formation of matrix coefficients, as given above. 
\end{proof}

Therefore in this case we have: \begin{equation}
\label{defFJW}
W_{\gamma,\rho,\psi} \cong \ind^G_{M_\gamma U^+} \chi_\gamma
\end{equation}
from which the similarity to the even orbit case is immediately apparent. 

\section{Hamiltonian spaces and quantization}\label{sec:HamiltonianQuantization}

In the next two sections, we introduce the theory of duality of hyperspherical varieties as set out in \cite{BZSV}. We begin in this section by recalling the necessary preliminaries from symplectic geometry, as well as the classical philosophy of geometric quantization as a bridge between symplectic geometry and representation theory. 

\begin{rem}
We will be dealing with unitary representations in this section only, to illustrate the philosophy of quantization. For the rest of this paper, we will work in the analogous setting of smooth representations (hence all inductions are taken to be smooth, etc.). 
\end{rem}

\subsection{Hamiltonian spaces} We first review some preliminaries from symplectic geometry; one good reference is \cite[Chapter 1]{CG}.

\begin{definition} (Hamiltonian $G$-spaces)
A \textit{Hamiltonian $G$-space} (or $G$-variety) is a smooth, symplectic variety $M$ with a $G$-action (from the right, unless otherwise specified) by symplectomorphisms and a $G$-equivariant \textit{moment map} \[ \mu: M\rightarrow \g^\ast.\] 

\noindent The moment map $\mu$ must satisfy the following: \begin{itemize}
    \item Each $X\in \g$ induces a vector field $\rho(X)$ on $M$ by `differentiating' the $G$-action, which further induces a 1-form on $M$ by contracting with the symplectic form $\omega$: \[ Y \mapsto \omega(\rho(X), Y). \]
    On the other hand $X$ and $\mu$ also define a 1-form on $M$ via \[ d\big(m\mapsto (\mu(m))(X)\big).\]
    These two 1-forms must coincide. 
\end{itemize}

\end{definition}

\begin{definition}\label{def:PoissonBracket} (Poisson bracket)
Given two regular functions $f_1,f_2$ on $M$, we define the Poisson bracket $\{ f_1,f_2\}$ as follows: the two 1-forms $df_1,df_2$ are the contractions with $\omega$ of some (unique) vector fields $X_{f_1},X_{f_2}$ respectively. Then take \[ \{f_1,f_2\} := \omega(X_{f_1},X_{f_2}). \]
This makes the ring of regular functions on $M$ a Poisson algebra. 
\end{definition}

\begin{example}
Any symplectic vector space $(W, \langle -,-\rangle)$ is naturally a Hamiltonian $\Sp(W)$-space with the moment map \[ \mu : w \mapsto \big( X\mapsto \frac{1}{2}\langle Xw,w\rangle  \big) \quad \text{for $w\in W$ and $X\in\g$}.\]
\end{example}

\begin{example}
Any cotangent bundle $T^\ast X$ (for $X$ a $G$-variety) is naturally a symplectic variety with the symplectic form $\omega=d\lambda$, where $\lambda$ is the tautological 1-form pairing tangent and cotangent vectors. It is then naturally a Hamiltonian variety, with the moment map \[ \mu : p \mapsto \big( Y \mapsto -\lambda(\rho(Y))|_p \big) \quad \text{for $p\in T^\ast X$ and $Y\in\g$}. \]
\end{example}

\subsection{Quantization and examples} According to the classical philosophy of geometric quantization \cite{GuSt}, and as explained in \cite{Ga}, one may construct from each Hamiltonian $G$-space $M$ a (unitary) representation of $G$, which we call its quantization. 

\begin{example}\label{ex:QuantizationWeil} (Weil representation)
Consider a symplectic vector space $M:=W$ with $G:=\Sp(W)$ acting on it. Choosing a polarisation $W=X\oplus Y$ with $X,Y$ Lagrangians, the Weil representation, which can be realised on $L^2(Y)$, may be thought of as a quantization of the Hamiltonian $\Sp(W)$-space $W$. 
\end{example}

\begin{rem}\label{rem:Anomaly} (Anomaly)
Note that the Weil representation is not a representation of $\Sp(W)$ but of the metaplectic cover ${\rm Mp}(W)$. In the language of \cite{BZSV}, this is because $W$ has `anomaly' (which can be detected via Betti or étale cohomology), and anomalous varieties are at present excluded from the expectations of duality of hyperspherical varieties. None of the new examples of hyperspherical duality that we will exhibit in this paper will be anomalous. 

One should still work with the quantization as a representation of the metaplectic cover where appropriate, focusing on the cases where the quantization descends to an actual representation of an algebraic group, as in the remarks before Definition \ref{def:GenWhittakerNoneven}. 
\end{rem}

\begin{example}\label{ex:QuantizationCotangent} (Cotangent bundles)
The quantization of a cotangent bundle $T^\ast X$, for $X$ a $G$-variety, should be the space of functions $L^2(X)$, as a unitary representation of $G$.
\end{example}

\subsection{Symplectic reduction and induction}\label{sec:SymplecticOperations} This philosophy further postulates that many standard operations in symplectic geometry correspond to standard operations in representation theory. We shall review two of the most pertinent ones below: symplectic reduction and symplectic induction, which correspond respectively to formation of coinvariant spaces (or, more generally, multiplicity spaces) and induction of representations.  

\begin{definition}\label{def:SymplecticReduction} (Symplectic reduction)

The symplectic reduction of a Hamiltonian $G$-space $M$ is defined as \[ M\times^G_{\g^\ast} \{0\}.\]

The notation $\times^G_{\mf g^\ast}$ denotes the fiber product of $M$ and $\{0\}$ over $\mf g^\ast$ (via the moment maps), modulo the action of $G$ on $M$ (assuming the quotient exists as a scheme). 
    
\end{definition}

As explained in \cite{Ga}, quantization of symplectic reduction corresponds to taking $G$-coinvariant spaces. 

Furthermore, one may replace the trivial space $\{0\}$ with a coadjoint orbit $\mathscr{O} \subset \g^\ast$ corresponding under quantization to an irreducible representation $\rho$ of $G$. Quantization of the symplectic reduction then corresponds to taking the $\rho$-multiplicity space. 

Extending this idea further, given two Hamiltonian $G$-spaces $M_1$ and $M_2$, one may also consider the symplectic reduction of $M_1\times M_2^-$ (where $M_2^-$ is $M_2$ with its symplectic form and moment map negated), which corresponds under quantization to the formation of Hom spaces \cite{RZ}.

\begin{definition} (Symplectic induction)
    
We define the symplectic induction of a Hamiltonian $H$-space $S$ from $H$ to $G$ as 
\begin{equation} M:= S\times^H_{\mf h^\ast} T^\ast G\cong (S\times_{\mf h^\ast} \g^\ast)\times^H G\end{equation} 

Some remarks are in order:
\begin{itemize}
\item Here $H$ acts on $T^\ast G$ from the left, and we have the identification $T^\ast G \cong \g^\ast\times G$, where the moment map is the projection onto the first factor.

\item The notation $\times^H_{\mf h^\ast}$ denotes the fiber product of $S$ and $T^\ast G$ over $\mf h^\ast$ (via the moment maps), modulo the relation $(sh,x)\sim (s,hx)$ for $s\in S, h\in H, x\in T^\ast G$ (assuming the quotient exists as a scheme). 

\item The moment map for $M$ is induced by the right (coadjoint) action of $G$ on $\g^\ast$ sending $\phi \mapsto {\rm Ad}(g^{-1})\phi$. 
\end{itemize}
\end{definition}

The quantization of symplectic induction corresponds to induction of representations from $H$ to $G$. 

\begin{rem}\label{rem:FrobeniusReciprocitySymplectic}
With the operations of symplectic induction and reduction, one may readily formulate further analogues of other standard constructions in representation theory; for instance, symplectic analogues of Frobenius reciprocity have been first studied in \cite{GuSt3}. See \cite{RZ} (in particular Theorem 3.4) for a detailed discussion. 

One may further hope that this can be formalised in the sense of forming a category of Hamiltonian spaces, and this is indeed possible in the setting of \textit{shifted symplectic geometry} \cite{PTVV}, which involves the machinery of derived geometry. For our purposes, the classical constructions of symplectic induction and reduction are sufficient. 
\end{rem}

\subsection{} We now examine some important examples of quantizations of cotangent bundles to illustrate symplectic reduction and induction. 

\begin{example}\label{ex:QuantizationWhittaker} (Cotangent bundles)

Suppose $H\subset G$ are groups, then the cotangent bundle $M:=T^\ast(H\bs G)$ may be identified with the symplectic induction of the trivial $H$-space $\{0\}$ from $H$ to $G$, which is \[ \{0\} \times^H_{\mf h^\ast} T^\ast G. \]  Its quantization should be the (unitary) induction $L^2(H \bs G) = (L^2-)\Ind^{G}_H \C $. Most often, we consider the case where $H\bs G$ is a spherical variety. 

The cotangent bundle $T^\ast(H\bs G)$ may also be thought of as the symplectic reduction of $T^\ast G$ with respect to $H$ (with $H$ now acting from the right via $g\mapsto h^{-1}g$), which corresponds to taking the $H$-coinvariant space of the regular representation $L^2(G)$ of $G$.
\end{example}

\begin{example}\label{ex:TwistedCotangent} (Twisted cotangent bundles)

Suppose now $H=N$ is a unipotent subgroup of $G$.

By shifting the moment map of the trivial $N$-space $\{0 \}$, we obtain twisted cotangent bundles \[ M:=(\lambda+\mf n^\perp)\times^N G \rightarrow N\bs G\] which quantizes to \[L^2(N,\psi \bs G) = (L^2-)\Ind^{G}_N \psi = \{ f: G\rightarrow \C \mid f(ng)=\psi(n)f(g)\} \] (the choice of $\lambda$ corresponds to choice of $\psi$). In general, a representation induced from a character can be thought of as the quantization of some twisted cotangent bundle. 

This example includes the usual Whittaker case, when $N$ is a maximal unipotent subgroup of $G$. 

\end{example}

\section{Relative Langlands duality}\label{sec:RelativeLanglandsDuality}

In this section, we introduce the theory of duality of hyperspherical varieties as set out in \cite{BZSV}.

\subsection{Hyperspherical varieties} The central objects of study in \cite{BZSV} are a class of Hamiltonian $G$-varieties defined over $\C$ (or an algebraically closed field of characteristic zero), called \textit{hyperspherical} varieties. 

\begin{definition}\label{def:Hyperspherical} (Hyperspherical varieties) 
A \textit{hyperspherical variety} is a smooth Hamiltonian $G$-variety equipped with a grading (that is, a commuting $\G_m$-action), such that:

\begin{itemize}
    \item it is affine;
    \item it satisfies the \textit{multiplicity-free, or coisotropic}, condition: the ring of $G$-invariant functions on $M$ is Poisson-commutative (cf. Definition \ref{def:PoissonBracket}). 
\end{itemize}

One also requires $M$ to satisfy several technical conditions: its generic stabiliser is connected, its moment map image meets the nilcone, and the $\G_m$-action is ``neutral" (which we will not define here). 

However, the most important condition is the multiplicity-free condition, and in what follows we will often (without loss of generality) ignore the other technical conditions, cf. the remarks after Theorem \ref{MainStructureThm}.  

\end{definition}

Under the philosophy of quantization as explained in Section \ref{sec:HamiltonianQuantization}, the multiplicity-free condition corresponds to the multiplicity-free property for representations \cite{GuSt2}.

\subsection{Whittaker induction}\label{sec:WhittakerInduction}

Continue the notation of Section \ref{sec:NilpOrbits}. 

\begin{definition}\label{def:WhittakerInduction} (Whittaker induction)
Consider any reductive subgroup $H$ of $G$ and a commuting $\SL_2$ (giving rise to a homomorphism $H\times \SL_2\rightarrow G$). Let $S$ be a symplectic $H$-vector space (or more generally a Hamiltonian $H$-space, but we do not need this). The Whittaker induction of $S$ along $H\times \SL_2\rightarrow G$ is defined as follows:

Let $\gamma$ be the $\mfsl_2$ triple corresponding to the $\SL_2$ factor; we have seen that $\mf u/\mf u^+$ carries a $M_\gamma$-invariant symplectic form $\kappa_1$, and hence can be naturally considered as a Hamiltonian $H$-space (since $H$ centralises $\gamma$) via the adjoint action of $H$. 

We in fact consider it as a Hamiltonian $HU$-space where $U$ acts additively via the identification $U/U^+\cong \mf{u}/\mf{u}^+$, and the moment map $\mu_U:\mf u/\mf u^+\rightarrow \mf u^\ast$ is shifted by $\kappa_f$, that is, $\mu_U(u) = \kappa_1(u)+\kappa_f$, where $\kappa_1:\mf u/\mf u^+\rightarrow (\mf u/\mf u^+)^\ast$ is the identification via the symplectic form. 

Then the Whittaker induction of $S$ is defined to be the symplectic induction of $S\times (\mf u/\mf u^+)$ from $HU$ to $G$: 
\begin{equation}
\label{defWhittakerInduction} 
(S\times (\mf u/\mf u^+))\times^{HU}_{(\mf h+\mf u)^\ast} T^\ast G \cong ((S\times (\mf u/\mf u^+))\times_{(\mf h+\mf u)^\ast} \g^\ast)\times^{HU} G
\end{equation}
\end{definition}

Comparing Section \ref{notevengamma} and Definition \ref{def:WhittakerInduction}, we see that since $H$ is a subgroup of the centraliser $M_\gamma$ of the $\SL_2$ factor, then under the philosophy of quantization, Whittaker induction corresponds precisely to the formation of the generalised Whittaker representations $W_{\gamma,\rho,\psi}$ as in Section \ref{notevengamma}: 

\begin{itemize}

    \item $S$ corresponds to $\rho$;
    \item $\mf u/\mf u^+$ corresponds to the oscillator representation $\omega_\psi$ of $U$ with the associated (Weil) representation of $H$;
    \item the symplectic induction from $HU$ to $G$ corresponds to the induction of representations from $HU$ to $G$. 
\end{itemize}

It is this case that we will be working with throughout this paper. In particular, the choice of $S$ should correspond to the canonical choice of $\rho$ as described in Section \ref{sec:FourierJacobiInduction}. 

\begin{rem}\label{rem:Grading} (Grading)
When $S$ has a grading (i.e. commuting $\G_m$-action), the Whittaker induction of $S$ can also be given a natural grading. However, since we do not make essential use of the grading in this paper, we omit the details (which are rather lengthy and technical). It will suffice to mention that every symplectic $H$-vector space $S$ is naturally graded via linear scaling, and so every Whittaker-induced space from a symplectic vector space also carries a corresponding natural grading. 
\end{rem}

\subsubsection{Simplifying the Whittaker induction} The definition (\ref{defWhittakerInduction}) of Whittaker induction, while corresponding nicely under quantization to the formation of generalised Whittaker representations, is geometrically unwieldy. It is possible, via the theory of Slodowy slices, to simplify the Whittaker induction somewhat.

In particular, \cite[Lemma 2.1]{GaGi} states that one has an isomorphism \begin{equation}\label{SlodowySlice} U\times (f+\g^e)\rightarrow f+\mf u^{+,\perp}\end{equation} given by the action map of $U$ on $(f+\g^e)$, where $\g^e$ is the centraliser of $e$ (considered as a subspace of $\g^\ast$ via $\kappa$). 

Now note that \begin{equation}\label{FiberProd}
    (S\times (\mf u/\mf u^+))\times_{(\mf h+\mf u)^\ast} \g^\ast
\end{equation}  
may be identified with the set of pairs $(s,x)$ for $s\in S$ and $x\in \g^\ast$, such that 
\begin{itemize}
    \item the restrictions of $\mu(s)$ and $x$ to $\h$ are equal ($\mu$ is the moment map for $S$), and
    \item that $x$ restricts to $f$ on $\mf u^+$, that is, $x\in f+\mf u^{+,\perp}$ (noting that we have used the $f$- or $\kappa_f$-shifted moment map for $(\mf u/\mf u^+)$),
\end{itemize}
since then the corresponding element of $(\mf u/\mf u^+)$ is uniquely determined by $(s,x)$. 

Combining (\ref{SlodowySlice}) and (\ref{FiberProd}), one therefore sees that we have a ($H$-equivariant) isomorphism \[ (S\times (\mf u/\mf u^+))\times_{(\mf h+\mf u)^\ast} \g^\ast \cong (S\times_{\mf h^\ast} (f+\g^e) )\times U,  \]

Hence the Whittaker induction can be written as 
\begin{equation}
\label{defWhittakerInduction2} 
(S\times_{\mf h^\ast} (f+\g^e) )\times^{H} G
\end{equation}

We remark that when $S$ is trivial, we have $S\times_{\mf h^\ast} (f+\g^e) = \mf h^\perp \cap (f+\g^e)$. 

Such a rewriting as in (\ref{defWhittakerInduction2}) makes clear the geometric meaning of the Whittaker induction $M$: it is always an (affine) bundle over $H\bs G$. In other words, it is the subgroup $H$ which plays the biggest role in the geometry of $M$. 

Also, since we will later work with the full orthogonal groups (rather than the special orthogonal groups), (\ref{defWhittakerInduction2}) also makes clear that one can formulate the analogous hyperspherical dual pairs for special orthogonal groups without much issue.

\subsection{Structure theorem} The main structure theorem proved in \cite{BZSV} is as follows:

\begin{thm}\label{MainStructureThm}
Suppose $M$ is a hyperspherical $G$-variety. Then there is a reductive subgroup $H$ of $G$ and a commuting $\SL_2$ (giving rise to a homomorphism $H\times \SL_2\rightarrow G$), and a symplectic $H$-vector space $S$, such that $M$ is the \textit{Whittaker induction} of $S$ along $H\times \SL_2\rightarrow G$. 
\end{thm}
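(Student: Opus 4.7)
The plan is to reverse-engineer the Whittaker induction construction: given a hyperspherical $M$, I would extract the triple $(H, \iota, S)$ by means of a moment-map slice, then verify that $M$ is recovered as its Whittaker induction. First I would locate a distinguished point. By hypothesis the moment map image $\mu(M)$ meets the nilcone, so (after identifying $\g\cong\g^{\ast}$ via $\kappa$) pick a nilpotent $f\in\mu(M)$ and apply Jacobson--Morozov to obtain an $\mfsl_2$-triple $\gamma=\{e,h,f\}$, equivalently a homomorphism $\SL_2\to G$. Next, form the preimage $S:=\mu^{-1}(f+\g^e)$ of the Slodowy slice. In view of the transversality isomorphism~\eqref{SlodowySlice}, the action map induces a $U$-equivariant identification $\mu^{-1}(f+\mf u^{+,\perp})\cong U\times S$, so that $S$ inherits the structure of a Hamiltonian variety for the reductive centralizer $M_\gamma$. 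This step is geometric Whittaker reduction of $M$ along $U^{+}$ at the character $\chi_\gamma$, and it encodes all of $M$ in the sense that the $G$-action reproduces everything outside the slice.

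The heart of the argument is to show that $S$ is in fact a \emph{linear} symplectic $H$-representation for some reductive $H\subseteq M_\gamma$. Here the neutral $\G_m$-grading on $M$, twisted by the cocharacter coming from $h$ (Kazhdan grading), descends to a $\G_m$-action on $S$ that contracts to a unique fixed point $s_0$ with $\mu(s_0)=f$. A smooth affine $\G_m$-variety with a contracting action and a single fixed point is $\G_m$-equivariantly isomorphic to its tangent space at that point, via a Bia\l ynicki--Birula / formal slice argument; this linearizes $S$ to a symplectic $M_\gamma$-vector space. The multiplicity-free condition on $M$ is then inherited by $S$ as an $M_\gamma$-space (since Whittaker reduction transports Poisson-commuting $G$-invariants on $M$ to Poisson-commuting $M_\gamma$-invariants on $S$), and a standard reduction cuts $M_\gamma$ down to a reductive subgroup $H$ compatible with the connectedness of the generic stabilizer of $M$.

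Finally, one verifies reconstruction. By Definition~\ref{def:WhittakerInduction}, in the simplified form~\eqref{defWhittakerInduction2}, the Whittaker induction of $S$ along $H\times\SL_2\to G$ is $(S\times_{\mf h^\ast}(f+\g^e))\times^{H}G$. The tautological $G$-equivariant map back to $M$, sending $(\sigma,g)$ to $\sigma\cdot g$, is a symplectomorphism on the slice by construction, $G$-equivariant everywhere, and surjective because the $G$-saturation of the Slodowy preimage exhausts $M$ (a consequence of the moment-map geometry and smoothness). A dimension count and injectivity on the open stratum then promote it to a global isomorphism of Hamiltonian $G$-varieties.

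The hardest step will clearly be the linearization in the middle paragraph: proving that the Slodowy-preimage $S$ is genuinely a linear symplectic vector space, rather than a more general smooth affine Hamiltonian $H$-space, is where the full force of all the hyperspherical hypotheses---smoothness, affineness, connected generic stabilizer, and the neutrality of the grading---must be used in concert. A subtler but closely related difficulty is pinning down $H$ inside $M_\gamma$: the coisotropy condition determines $H$ only up to finite data, and one must verify that the chosen $H$ really does act on $S$ by a symplectic representation compatible with the grading, so that Whittaker induction in the sense of Section~\ref{sec:WhittakerInduction} applies unambiguously.
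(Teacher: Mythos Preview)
The paper does not contain a proof of this theorem: it is stated as ``the main structure theorem proved in \cite{BZSV}'' and simply cited. There is therefore no proof in the paper to compare your proposal against.

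That said, your outline is broadly the right shape for how the argument in \cite{BZSV} goes, but a few points deserve caution. First, you write ``pick a nilpotent $f\in\mu(M)$'': an arbitrary nilpotent in the moment image will not do. The correct basepoint is singled out by the $\G_m$-grading---one takes a point in the unique closed $G\times\G_m$-orbit (this is exactly how the paper later invokes the structure theorem in Proposition~\ref{prop:Slice}). Without this, the Slodowy preimage need not have the contracting $\G_m$-action you want, and the linearization step collapses. Second, your identification of $H$ is too loose: $H$ is not obtained by ``cutting $M_\gamma$ down'' via coisotropy, but is the stabilizer of the distinguished basepoint $s_0$ inside $M_\gamma$, and one must then separately check that $H$ is reductive and that the normal directions at $s_0$ organize into the symplectic $H$-representation $S$. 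Third, the reconstruction map you describe goes the wrong way for a clean argument: rather than mapping the Whittaker induction into $M$ and checking bijectivity, it is cleaner to use the $U$-slice isomorphism to identify a neighbourhood of the distinguished orbit with the Whittaker-induced model and then use the $\G_m$-contraction to propagate this globally.
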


Conversely, to check that the Whittaker induction of $S$ along $H\times \SL_2\rightarrow G$ is hyperspherical, it suffices to check the multiplicity-free condition (and that the generic stabiliser is connected). We have seen from (\ref{defWhittakerInduction2}) that the Whittaker induction is automatically affine, and it is shown in \cite{BZSV} that the other technical conditions of Definition \ref{def:Hyperspherical} are also satisfied. 

\begin{rem} (On connectedness)
    Note that in \cite{BZSV} the group $G$ is usually assumed connected; for us, we do not require that $G$ be connected, since we will work with the orthogonal group rather than the special orthogonal group. Consequently, the condition that the generic stabiliser be connected will have correspondingly less significance for us. 

    However, since this condition is of importance in certain contexts and is a condition that needs to be independently checked, let us remark that it will be possible, using (\ref{defWhittakerInduction2}) and the results of \cite{GL} for instance, to verify that the generic stabilisers are connected in the specific cases that we will consider in this paper, after replacing the groups involved with their identity components. While we have not verified this in every case, we do not have any reason to expect otherwise.

\end{rem}

\begin{rem}\label{rem:Rationality} (Rationality)
    With the structure theorem in hand, now one defines (forms of) hyperspherical varieties over non-algebraically closed fields (the obvious one being our local field $F$), via the algebraic datum \[ H\times \SL_2\rightarrow G, \quad H\rightarrow \Sp_{2n}\] over $F$. (To be more precise, $H,G$ are reductive group schemes over $F$ and $H\rightarrow G$ is a closed immersion.) 
    
    The expectation being that, for each $M$ (over $\C$) there will be a distinguished `split form' of $M$ defined over arithmetic fields $F$ (and there is an expected construction of such a form in the untwisted case). 

    For our purposes in working with generalised Whittaker models, this is the most reasonable and natural point of view, and resolves potential issues to do with rationality. 

\end{rem}

\subsection{Duality}\label{sec:DualityExpectations} The key point of the theory is that there is expected to exist a \textit{duality} \[G\circlearrowright M\longleftrightarrow M^\vee \circlearrowleft G^\vee\] of (anomaly-free, cf. Remark \ref{rem:Anomaly}) hyperspherical varieties (over $\C$). Each such hyperspherical dual pair $(M,M^\vee)$ encodes an instance of the relative Langlands program with corresponding (conjectural) statements at the local and global levels. 

However, there is at present no definition of the duality $M\leftrightarrow M^\vee$ (which can be proved to satisfy the desiderata of the duality theory). Hence, for a given pair $(M,M^\vee)$, the expected duality of $M$ and $M^\vee$ should be verified via the conjectures it entails. Here we focus on a (smooth) local incarnation or counterpart of the conjectures. 

\begin{claim}\label{ex:relLanglandsQuantizationSmooth}
The irreducible representations of $G$ of Arthur type which occur as quotients of a quantization of $M$ belong to Arthur packets whose Arthur parameters factor through the morphism defining the hyperspherical variety $M^\vee$: \[ X^\vee(\C) \times \SL_2(\C)\rightarrow G^\vee(\C). \] 

\end{claim}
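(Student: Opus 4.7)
The plan is not to prove this expectation in full generality---no formal definition of the duality $M \leftrightarrow M^\vee$ is yet available, so a general proof is out of reach---but rather to verify it case by case for specific families of hyperspherical dual pairs $(M_e, M_{e^\vee})$. The strategy adopted in this paper focuses on the hook-type generalised Whittaker models, where the groups naturally assemble into a reductive dual pair and where the theta correspondence provides the bridge between the two spectral problems.

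First, for each $e$ of hook type from the classification of Theorems \ref{thm:OrthogonalClassification} and \ref{thm:SymplecticClassification}, I would identify a candidate dual nilpotent $e^\vee$ together with the data $(\iota^\dagger \colon H^\dagger \times \SL_2 \to G^\vee, S^\dagger)$ defining the putative dual variety $M_{e^\vee}$. The combinatorial matching (transposing and reshaping the partition, matching of centralisers $M_\gamma$ and of $\SL_2$-types) is guided by the fact that for hook-type $e$ the centraliser $M_\gamma$ is essentially a single classical group, which is exactly the situation in which a reductive dual pair arises naturally and in which the simplification \eqref{defFJW} applies.

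Second, I would translate the statement that an Arthur-type $\pi$ occurs as a quotient of the quantization $\Pi_{M_e}$ into the non-vanishing of the generalised Whittaker functional space $W_{\gamma,\rho,\psi}(\pi)$ from Definition \ref{def:GenWhittakerNoneven}, and use the theorem of Gomez--Zhu (\cite{GZ}, \cite{Zh}) to transfer this non-vanishing across a suitably chosen theta correspondence. The output should be an equivalence between $W_{\gamma,\rho,\psi}(\pi) \neq 0$ on $G$ and $W_{\gamma^\vee,\rho^\vee,\psi}(\theta(\pi)) \neq 0$ on the dual member of the pair. Choosing the pair so that the two sides line up with $M_e$ and $M_{e^\vee}$---this is where the hook-type hypothesis and the Fourier--Jacobi-type presentation in Lemma \ref{lem:FourierJacobiInduction} are essential---delivers the required correspondence of spectral problems.

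Third, I would combine this transfer with the known behaviour of A-parameters under the local theta correspondence to verify that the A-parameter of $\pi$ factors through the morphism $\iota^\dagger$ defining $M_{e^\vee}$, and symmetrically on the dual side. The main obstacle will be matching, on the nose, the candidate hyperspherical dual with the actual output of the Gomez--Zhu machinery: this is a structural computation which is tractable precisely because the hook-type hypothesis is so restrictive, but which becomes delicate at the boundary cases of the classification. A secondary obstacle is the Arthur-parameter bookkeeping across the theta lift---tracking signs, $\SL_2$-types, and first-occurrence phenomena---so that the factorisation through $\iota^\dagger$ really matches the data coming from $M_{e^\vee}$ rather than a nearby but inequivalent hyperspherical datum.
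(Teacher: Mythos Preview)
This statement is labeled as an \emph{Expectation} in the paper (the environment \texttt{claim} is styled as ``Expectation''), and the paper gives no proof of it; it is stated as a conjectural desideratum of the \cite{BZSV} duality, to be verified case by case. You have correctly identified this, and the verification strategy you outline---identifying a candidate dual nilpotent $e^\vee$ for hook-type $e$, invoking the Gomez--Zhu transfer (Proposition \ref{prop:MainGomezZhu}) to relate the two generalised Whittaker spectral problems across a theta correspondence, and then using Adams' conjecture (Theorem \ref{thm:Adam}) to track A-parameters---is exactly the approach carried out in Section \ref{sec:HookType} (Theorems \ref{thm:EvenOrthogonalMain} and \ref{thm:OddOrthogonalMain}).

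One small clarification worth flagging: in the paper's actual execution, the candidate $e^\vee$ is not found by ``transposing and reshaping the partition'' directly, but rather by applying the moment-map transfer of nilpotent orbits (Proposition \ref{prop:TransferNilpOrbit}) from the \emph{regular} nilpotent orbit on the smaller member of the dual pair. Concretely, the Gomez--Zhu theorem is applied with $\gamma$ regular on one side, so that the relevant Whittaker space on that side is the ordinary Whittaker model and the theta lift identifies irreducible quotients of $W_{\gamma_1,\psi}$ with theta lifts of $\psi$-generic representations; the A-parameter factorisation then drops out of Adams' conjecture rather than from an \emph{a priori} matching of $W_{\gamma,\rho,\psi}$ with $W_{\gamma^\vee,\rho^\vee,\psi}$ as you phrase it. Your second step as written suggests transferring between two nontrivial generalised Whittaker models, whereas the paper always reduces one side to the regular (ordinary Whittaker) case. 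This is a minor difference of emphasis, not a gap.
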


\begin{rem}\label{rem:DualExpectation}
    Because the theory is a \textit{duality}, one in fact expects a pair of statements arising from this dual pair $M,M^\vee$ by exchanging the roles of $M$ and $M^\vee$, often relating two \textit{a priori} unrelated (branching) problems:

\begin{quote}
    The irreducible representations of $G^\vee$ of Arthur type which occur as quotients of a quantization of $M^\vee$ belong to Arthur packets whose Arthur parameters factor through the morphism defining the hyperspherical variety $M$: \[ H(\C) \times \SL_2(\C)\rightarrow G(\C). \] 
\end{quote}

    We may refer to this as the `dual problem', and this is the key feature of the duality theory. 
\end{rem}

The conjecture hence characterizes the spectral decomposition of the quantization of $M$ as the image of a certain Langlands functorial lifting.

In the smooth setting, it is known that there are additional subtleties and such a formulation in terms of Arthur parameters is only true as a guiding principle, or up to an approximation. Nonetheless, one expects a natural map realising a lifting of irreducible representations, and in this paper, such a lifting will be facilitated by the theta correspondence (Section \ref{sec:ThetaCorrespondence}). 

It is an interesting future problem to investigate the precise statements in the local $L^2$-setting and even the global setting. This will involve at a minimum the computation of the relevant local relative characters in the unramified setting (as was first done in \cite{Sa1} and later also in \cite{WZ}).

\section{Hyperspherical Whittaker models}\label{sec:HypersphericalClassification}

In this section, we determine an upper bound for the possible generalised Whittaker models for the orthogonal and symplectic groups which arise from hyperspherical varieties (over $\C$), cf. Section \ref{sec:WhittakerInduction}. This provides an effective upper bound for the possible generalised Whittaker models which may be contained in the conjectural class admitting a duality theory. 

\subsection{} To do so we first need an effective criterion for hypersphericality. Continuing the notation of Section \ref{sec:RelativeLanglandsDuality}, we record: 

\begin{prop}\label{prop:HypersphericalCriterion}
If $M$ is hyperspherical, then $H\bs L$ is a (smooth, affine) spherical $L$-variety, where $L$ is the Levi factor of the parabolic $P=LU$ associated to the $\mfsl_2$-triple $\gamma$. In particular, $H$ is a spherical subgroup of $M_\gamma$, and $M_\gamma$ is a spherical subgroup of $L$. 
\end{prop}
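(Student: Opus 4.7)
The plan is to derive sphericality of $H\backslash L$ from the coisotropic (multiplicity-free) condition defining the hypersphericality of $M$, using the standard criterion that, for a smooth affine $L$-variety $Y$, $L$-sphericality is equivalent to Poisson-commutativity of $\C[T^\ast Y]^L$ \cite{GuSt2}. The task is therefore to reduce the coisotropic condition on $M$ to the corresponding condition on $T^\ast(H\backslash L)$.

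First, using the simplified form of Whittaker induction (\ref{defWhittakerInduction2}), I identify $\C[M]^G\cong \C[S\times_{\mf h^\ast}(f+\g^e)]^H$, with the Poisson bracket inherited from the product of the symplectic form on $S$ and the KKS bracket on $\g^\ast$; the hyperspherical condition demands this ring be Poisson-commutative. To pass from the Slodowy-slice picture to a more tractable linear one, I would exploit the Kazhdan $\G_m$-action (built from the $\mfsl_2$-grading together with the neutral $\G_m$-grading on $M$) to degenerate $f+\g^e$ to $\g^e$ in a flat family, obtaining Poisson-commutativity of $\C[S\times_{\mf h^\ast}\g^e]^H$ in the limit. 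Then, using the $\mfsl_2$-weight decomposition $\g^e=\bigoplus_{j\geq 0}\g^e_j$ with $\g^e_0=\mf m_\gamma$, I would project along $\g^e\twoheadrightarrow \mf m_\gamma$; this projection is Poisson because $\mf m_\gamma\subset\mf l$ is a reductive subalgebra on which $\kappa$ is non-degenerate and the $\mfsl_2$-grading is $\kappa$-orthogonal. The result is Poisson-commutativity of $\C[S\times_{\mf h^\ast}\mf m_\gamma]^H$.

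Via the symplectic-induction interpretation of Section \ref{sec:SymplecticOperations}, this last statement says that the Hamiltonian $L$-variety $(S\times_{\mf h^\ast}\mf m_\gamma)\times^H L$ is coisotropic; in the case $S=0$ (the main case of interest) this variety is simply $T^\ast(H\backslash L)$, giving $L$-sphericality of $H\backslash L$. The sphericality of $M_\gamma\backslash L$ then follows formally from the surjection $L/H\twoheadrightarrow L/M_\gamma$, and the sphericality of $H\backslash M_\gamma$ follows by running the same argument with $M_\gamma$ in place of $L$ throughout (the grading and projection step are unchanged). The main obstacle lies in verifying the Poisson compatibilities of the two geometric reductions — the Kazhdan degeneration of $f+\g^e$ and the weight-$0$ projection $\g^e\twoheadrightarrow \mf m_\gamma$; the key inputs are that the Kazhdan action rescales the KKS bracket uniformly (so its associated-graded limit remains Poisson) and that the $\mfsl_2$-grading decomposes $\g^e$ into $\kappa$-orthogonal pieces, so that the projection intertwines the ambient and restricted KKS structures on $H$-invariants. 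Once these compatibilities are in place, the remainder of the argument is essentially formal bookkeeping within the symplectic induction/reduction framework.
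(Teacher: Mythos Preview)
Your argument contains a genuine gap at the step where you pass from Poisson-commutativity of $\C[S\times_{\mf h^\ast}\mf m_\gamma]^H$ to the conclusion that $H\backslash L$ is $L$-spherical. The weight-$0$ projection $\g^e\twoheadrightarrow \g^e_0=\mf m_\gamma$ lands only in $\mf m_\gamma$, not in $\mf l$; consequently the variety you assemble, $(S\times_{\mf h^\ast}\mf m_\gamma)\times^H L$, is \emph{not} $T^\ast(H\backslash L)$ when $S=0$. Indeed $T^\ast(H\backslash L)\cong (\mf h^\perp\cap \mf l^\ast)\times^H L$ has dimension $2(\dim L-\dim H)$, whereas your space has dimension $(\dim M_\gamma-\dim H)+(\dim L-\dim H)$, and these agree only when $M_\gamma=L$, i.e.\ when the $\SL_2$ is trivial. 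At best your reduction proves that $H\backslash M_\gamma$ is $M_\gamma$-spherical, which is strictly weaker: sphericality of $H$ in $M_\gamma$ and of $M_\gamma$ in $L$ does not combine to give sphericality of $H$ in $L$. There is also a secondary issue you flag but do not resolve: the degenerated Poisson structure on $\g^e$ coming from the Kazhdan limit of the Slodowy-slice (transverse) Poisson structure is \emph{not} a KKS structure for any Lie algebra structure on $\g^e$, so the claim that the projection to $\mf m_\gamma$ is a Poisson map needs an independent argument beyond the $\kappa$-orthogonality of the $\mfsl_2$-grading.

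The paper takes a quite different route that avoids these difficulties entirely. It first invokes the result of \cite{BZSV} that coisotropy of $M$ forces the homogeneous space $HU\backslash G$ to be $G$-spherical (this step packages all of the Poisson-geometric input). From there it is a purely combinatorial Bruhat-type argument: choosing Borels $B_L=B\cap L$ with $U\subset B\subset P$, the embedding $B_L\backslash L\hookrightarrow B\backslash G$ transfers the finiteness of $HU$-orbits on $B\backslash G$ (a consequence of sphericality, by Knop) to finiteness of $H$-orbits on $B_L\backslash L$, whence an open $B_L$-orbit on $H\backslash L$. This bypasses any need to track Poisson structures through degenerations or projections, and in particular never requires comparing $\mf m_\gamma$ with $\mf l$.
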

\begin{proof}
    It is shown in \cite{BZSV} that if $M$ is coisotropic, then $HU\bs G$ is spherical. (We remark that it is known from the theory of spherical varieties that a twisted cotangent bundle $M=T^\ast(X,\psi)$ is coisotropic if and only if $X$ is a spherical variety. The proof in \cite{BZSV} essentially extends this to general Whittaker-induced $M$.) 

Now again from the theory of spherical varieties (cf. the theory of Whittaker-type induction \cite{SV}), we may view $HU\bs G$ as the parabolic induction from $H\bs L$ along the parabolic $P=LU$, with then $H\bs L$ a (smooth, affine) spherical $L$-variety. 

(A direct proof of the key spherical property is as follows: Let $B_L,B$ be Borel subgroups of $L,G$ with $B_L=B\cap L$ and $P\supseteq B\supseteq U$; we have the natural embedding \[ B_L\bs L\hookrightarrow B\bs G. \] Now every spherical variety has only finitely many orbits under a Borel subgroup \cite{Kn}, so $B\bs G$, and hence too $B_L\bs L$, has finitely many $HU$-orbits, and hence finitely many $H$-orbits. One of these orbits must hence be dense, which means $H\bs L$ is spherical as desired.)
\end{proof}

\begin{rem}
It is in fact shown in \cite{BZSV} that $M$ is coisotropic if and only if $HU\bs G$ is spherical and $(S\times (\mf u / \mf u^+))$ is coisotropic for the generic stabiliser of $T^\ast(HU\bs G)$ (in particular for $H$). This latter condition can be checked for each given case using the tables of \cite{Kn2}. Therefore, modulo the choice of $S$ and checking that the generic stabiliser of $M$ be connected (which in practice also corresponds roughly to the exclusion of type N spherical roots \cite{BZSV}), the upper bound we obtain in this section is relatively close to a precise classification of nilpotent orbits that give rise to hyperspherical varieties. 
\end{rem}

In view of the above proposition, and since we are interested in determining the possible $\gamma$ which gives rise to hyperspherical varieties, let us without loss of generality assume for the rest of this section that $H=M_\gamma$ is the centraliser of the corresponding $\mfsl_2$ triple $\gamma$. 

Further, for simplicity, let us first consider the case where the corresponding nilpotent orbit is even.

\begin{rem}
We briefly consider the case when the nilpotent orbit is not even at the end of this section, in Proposition \ref{prop:Noneven}. Given the relative complexity of the characterization we obtain, it is likely that a cleaner classification in generality can only be achieved after a fuller theory of combinatorial datum for general hyperspherical varieties is developed (in line with that for spherical varieties), which is one of the main open questions arising from \cite{BZSV}. 
\end{rem}

In what follows we will also make use of the following dimensional consideration:

\begin{lem}
If $H\bs L$ is spherical, then $\dim L - \dim H \le \dim B$, where $B$ is a Borel subgroup of $L$.
\end{lem}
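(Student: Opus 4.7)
The plan is to use the defining property of sphericality directly: by definition, $H\backslash L$ is a spherical $L$-variety if and only if a Borel subgroup $B$ of $L$ has an open (hence dense) orbit on $H\backslash L$. So my strategy is purely a dimension count on this open $B$-orbit.

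Concretely, I would proceed as follows. First, I would recall that $H\backslash L$ is a homogeneous $L$-variety of dimension $\dim L - \dim H$. Second, I would fix an open $B$-orbit $\mathcal{O}\subseteq H\backslash L$ guaranteed by sphericality; because $\mathcal{O}$ is open in an irreducible variety, $\dim \mathcal{O} = \dim(H\backslash L) = \dim L - \dim H$. Third, since $B$ acts transitively on $\mathcal{O}$, we have $\dim \mathcal{O} = \dim B - \dim \mathrm{Stab}_B(x)$ for any $x\in\mathcal{O}$, and in particular $\dim \mathcal{O} \leq \dim B$. Combining these gives the desired inequality
\[ \dim L - \dim H \leq \dim B. \]

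The argument is essentially a one-line observation, so there is no real obstacle; the only thing to be careful about is the irreducibility needed to conclude that the open $B$-orbit has the same dimension as the ambient variety, but this is immediate since $L$ (and hence $H\backslash L$ as its quotient) is irreducible as an algebraic group. No further structure theory is needed for the lemma itself.
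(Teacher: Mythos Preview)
Your argument is correct and is exactly the approach the paper takes: the paper's proof is the single sentence ``This is an immediate consequence of the fact that $B$ has an open dense orbit on $H\backslash L$,'' which is precisely the dimension count you spell out. Your write-up simply fills in the details of that one-line observation.
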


\begin{proof}
This is an immediate consequence of the fact that $B$ has an open dense orbit on $H\bs L$. 
\end{proof}

\subsection{Orthogonal groups} Suppose now $G$ is the orthogonal group $\O_n$, acting on an $n$-dimensional vector space $V$ equipped with an orthogonal form $B$.

Recall from Section \ref{NilpPartition} that the nilpotent orbits in $G$ are parameterised by the datum of the partition $\lambda=[l^{a_l},\dots, 1^{a_1}]$ of $n$, and the forms on the multiplicity spaces $(V_j,B_j)$, where even parts must occur with even multiplicity in $\lambda$. For even nilpotent orbits, all parts of the partition $\lambda$ have the same parity \cite{CM}. We have seen also that \[ H=M_\gamma \cong \prod_{j=1}^l G(V_j,B_j). \]

Let \[\lambda^t=[a_l+\dots+a_1,\dots, a_l]=[c_1,\dots,c_m]=[h^{b_h},\dots,1^{b_1}]\] denote the transpose partition of $\lambda$. 

If $\lambda$ has all even parts, then 
\[ H\cong \Sp_{a_l} \times \dots \Sp_{a_2}, \]
\[L\cong \GL_{h}^{\times b_h/2} \times \dots \times \GL_{1}^{\times b_1/2}.\]

If $\lambda$ has all odd parts, then 
\[ H\cong \O_{a_l} \times \dots \O_{a_1}, \]
\[L\cong \O_h \times \GL_{h}^{\times (b_h-1)/2} \times \dots \times \GL_{1}^{\times b_1/2}. \]

In other words, representing $\lambda$ by a Young tableaux $d$ (in the usual way), the factors of $H$ correspond to groups of rows of the tableaux with same length, while the factors of $L$ correspond to (pairs of) columns of the tableaux with same length. See Figure \ref{fig:ExampleHL} for an example. 

\begin{figure}[!h]
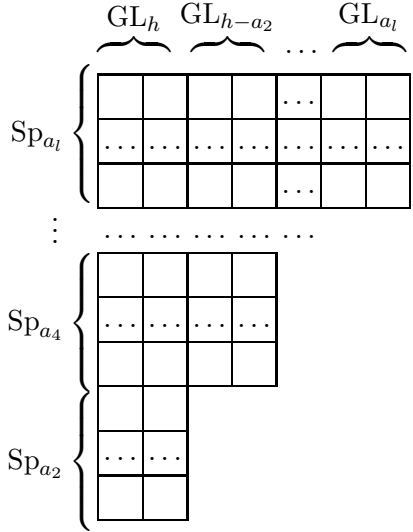

    \begin{center}
    
   \ytableausetup {mathmode,boxframe=normal,boxsize=1.5em,centertableaux} 
   \begin{tabular}{r@{}l}
   & $\overbrace{\hspace{2.5em}}^{\displaystyle \GL_h}$ $\overbrace{\hspace{2.5em}}^{\displaystyle \GL_{h-a_2}}$ $\dots$ $\overbrace{\hspace{2.5em}}^{\displaystyle \GL_{a_l}}$ \\
   \begin{tabular}{r@{}l} $\Sp_{a_l}\left\{\vphantom{\begin{ytableau}
    \quad \\ \quad  \\ \quad
 \end{ytableau}}\right.$ \\ $\vdots$ \hspace{1em} \\
 $\Sp_{a_4}\left\{\vphantom{\begin{ytableau}
    \quad \\ \quad  \\ \quad
 \end{ytableau}}\right.$  \\
 $\Sp_{a_2}\left\{\vphantom{\begin{ytableau}
    \quad \\ \quad  \\ \quad
 \end{ytableau}}\right.$  \end{tabular} & \begin{ytableau} \quad & \quad &\quad & \quad &\dots&\quad &\quad \\ \dots & \dots &\dots& \dots &\dots &\dots&\dots \\ \quad & \quad &\quad & \quad  &\dots &\quad&\quad \\ \none[\dots] & \none[\dots]& \none[\dots]& \none[\dots]& \none[\dots] \\ \quad & \quad &\quad & \quad \\ \dots &\dots &\dots &\dots \\ \quad &\quad &\quad &\quad \\ \quad & \quad \\ \dots &\dots \\ \quad &\quad   \end{ytableau} \\ \\
   \end{tabular} 

\end{center}

\caption{An illustration of the factors of $H$ and $L$ in one example where $\lambda$ has even parts.  }\label{fig:ExampleHL}
\end{figure}

Furthermore, $H$ is embedded in $L$ such that, for a factor $G(V_j,B_j)$ of $H$ and a factor $\O_i$ or $\GL_i$ of $L$, the composite projection $G(V_j,B_j) \hookrightarrow H \hookrightarrow L \twoheadrightarrow \O_i$ or $\GL_i$ is non-zero if and only if the corresponding rows and columns in $d$ share common cells. 

Now we may use the classification of smooth affine spherical varieties in \cite{KVS} to characterize the allowable nilpotent orbits. 

\begin{thm}\label{thm:OrthogonalClassification}
Let $G$ be the orthogonal group $\O_n$ and $M$ be a hyperspherical $G$-variety. By Theorem \ref{MainStructureThm}, it is obtained as the Whittaker induction along a map $H\times \SL_2\rightarrow G$. Let $\gamma$ be the nilpotent orbit determined by the $\SL_2$ factor. If $\gamma$ is even, then it corresponds to a partition $\lambda$ of the form 
\begin{itemize}
    \item $[2^{a_2}]$ (Shalika)
    \item $[n-a_1,1^{a_1}]$ (hook-type, corresponding to Bessel models \cite{GGP})
    \item finitely many low-rank exceptions \[[3,3], [4,4],[6,6].\] 
\end{itemize} 
\end{thm}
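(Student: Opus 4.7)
The starting point is Proposition~\ref{prop:HypersphericalCriterion}, which reduces the theorem to classifying those even nilpotent orbits $\gamma$ in $\mathrm{O}_n$ for which $H\backslash L$ is a spherical $L$-variety, where (without loss of generality) $H=M_\gamma$ and $L$ is the Levi factor associated to $\gamma$. Using the explicit description recalled immediately before the statement, for $\gamma$ even all parts of $\lambda$ share a common parity, $H$ is a product of $\mathrm{Sp}_{a_j}$- or $\mathrm{O}_{a_j}$-factors indexed by groups of rows of common length in the Young diagram $d$ of $\lambda$, and $L$ is a product of $\mathrm{GL}$-factors (with a single $\mathrm{O}$-factor in the all-odd case) indexed by columns of common length. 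The embedding $H\hookrightarrow L$ is ``staircase-shaped'': each factor of $H$ projects nontrivially only onto those factors of $L$ whose indexing columns meet the relevant rows of $d$.

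The plan is then to apply two successive filters. The first is the dimensional bound $\dim L-\dim H\le \dim B_L$ recorded in the lemma above the statement, which when expanded in $(a_j, b_i)$ is a restrictive polynomial inequality. A direct case analysis shows this inequality already forces $\lambda$ to be either a two-column partition $[2^{a_2}]$, a hook $[n-a_1,1^{a_1}]$, or to lie in a short finite list of partitions in low rank. The second filter is to verify sphericality for each surviving candidate using the Kr\"amer and Knop--Van Steirteghem classification of smooth affine spherical homogeneous spaces. For the two infinite families the verification is immediate: $[2^{a_2}]$ yields the Shalika pair $\mathrm{Sp}_{a_2}\hookrightarrow \mathrm{GL}_{a_2}$, while $[n-a_1,1^{a_1}]$ yields a Bessel-type pair in which the centraliser of the long row embeds as a spherical subgroup of the $\mathrm{O}$- or $\mathrm{GL}$-factor of $L$ indexed by the long column. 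The three sporadic partitions $[3,3], [4,4], [6,6]$ arise as the only low-rank cases not excluded by the dimension filter in which $H\backslash L$ is spherical (due to accidental isomorphisms in small rank) and can be checked against KVS by inspection.

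The principal technical obstacle I anticipate is the combinatorial bookkeeping for multi-part partitions, where $H$ is a nontrivial product and the staircase embedding mixes across several factors of $L$. Since the KVS classification is phrased for irreducible spherical pairs, one must argue separately that a product $H=H_1\times\cdots\times H_k$ embedded staircase-wise into a product $L=L_1\times\cdots\times L_m$ is spherical only if each induced projection $H\to L_i$ satisfies a KVS-type condition, together with a cross-constraint coming from the shared factors of $H$. Verifying that these cross-constraints eliminate every partition in the finite list produced by the dimension filter except the three exceptional ones is the main combinatorial content of the proof; once this is established the classification follows by inspection of the residual list.
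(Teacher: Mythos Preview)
Your overall strategy is close to the paper's: both start from Proposition~\ref{prop:HypersphericalCriterion} and ultimately appeal to the Knop--Van~Steirteghem tables. But there is a genuine gap in your first filter. You assert that the inequality $\dim L-\dim H\le \dim B_L$ alone forces $\lambda$ into $[2^{a_2}]$, a hook, or a short finite low-rank list. This is false. Take $\lambda=[3,3,1^{a_1}]$ in the all-odd case: here $H=\mathrm{O}_2\times\mathrm{O}_{a_1}$, $L=\mathrm{O}_{a_1+2}\times\mathrm{GL}_2$, and one computes $\dim L-\dim H=2a_1+4$, while $\dim B_L$ grows quadratically in $a_1$. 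The dimension bound is therefore satisfied for all $a_1\ge 5$, giving an \emph{infinite} family surviving your first filter that is neither Shalika nor hook-type. (Analogous infinite families survive in the all-even case as well.) So dimension cannot be the primary reducer; it is only a secondary tool.

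The paper's proof inverts your priorities. It takes the KVS graph $\Gamma$ (encoding which factors of $\mathfrak{h}$ project nontrivially into which factors of $\mathfrak{l}$) as the primary constraint and systematically inspects Tables~4--5 of \cite{KVS} to rule out the possible local configurations around each vertex of $\Gamma$. This structural argument is what eliminates families like $[3,3,1^{a_1}]$: the paper observes directly that $\mathrm{O}_{a_1}\backslash\mathrm{O}_{a_1+2}$ is not spherical for $a_1\ge 1$ (even though the KVS tables at the level of simple factors, and the dimension bound, do not by themselves preclude it). The dimension inequality is invoked only sporadically, to dispose of a handful of specific cases such as $[4,4,2,2]$ that slip past the KVS tables. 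Your ``second filter'' is thus doing the real work, and it must be used to \emph{exclude} infinite families rather than merely to confirm a residual finite list.
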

\begin{proof}
The classification of smooth affine spherical varieties $H'\bs G'$ is given in Tables 4 and 5 of \cite{KVS}. There, all the possible (indecomposable) pairs $\mf h'=\mf h'_1\oplus\dots\oplus \mf h'_s$, $\g' = \g'_1\oplus\dots\oplus \g'_r$ are listed, together with the combinatorial datum of a graph $\Gamma$ with vertices corresponding to the factors $\mf h'_1,\dots, \mf h'_s$ and $\g'_1,\dots, \g'_r$, and edges indicating if the composite projection $\mf h'_j \hookrightarrow \mf h' \hookrightarrow \g' \twoheadrightarrow \g'_i$ is non-zero. Here we always take $\mf h', \g'$ to be the commutator subalgebras of the Lie algebras of $H',G'$. In our case, we take $H'=H$ and $G'=L$, in view of Proposition \ref{prop:HypersphericalCriterion}. 

First, if $\mf h$ is trivial, then $\mf l$ must be trivial or some copies of $\mfsl_2$. In the former case, it is easy to see that one can only have the regular nilpotent orbit. In the latter case, a simple dimensional consideration shows that we can have at most one copy of $\mfsl_2$, and then that we can only have the partitions $[n-2,1,1]$, $[2,2]$, or $[3,3]$.

Therefore assume henceforth that $H$ has a factor $H_j=G(V_j,B_j)$ with non-trivial $\mf h_j$, corresponding to $a_j$ rows of $d$ with the same length $j$. 

(a) If all parts of $\lambda$ are even. Recall that even parts of $\lambda$ occur with even multiplicity. Then if $j\ge 4$, we must have in the graph $\Gamma$ a vertex corresponding to an $\mf{sp}(a_j)$ factor, connected to two or more vertices each corresponding to $\mf{sl}(a_j+r_1),\mf{sl}(a_j+r_2)$ factors for some even $r_1,r_2\ge 0$. An inspection of Table 5 of \cite{KVS} shows that this is only possible when $a_j=2$ and $r_1=r_2=0$ (that is, there are no other rows in $d$), and then only when $j\le 6$. So there are only finitely many low-rank exceptions here, namely $[4,4]$ and $[6,6]$. 

Hence $j=2$. Now if there are $r\ge 1$ rows of length $>2$ in $d$, then we must have $r\ge 2$ (because $r$ is even). Then one must have in $\Gamma$ a vertex corresponding to an $\mf{sp}(a_2)$ factor, connected to a vertex corresponding to an $\mf{sl}(a_2+r)$ factor with $r\ge 2$. An inspection of Tables 4 and 5 of \cite{KVS} shows that there is only one possibility, corresponding to the partition $[4,4,2,2]$. However, in this case it is not possible for the $H \bs L$ to be spherical, by a dimensional consideration.   

Otherwise, we are left with the partition $\lambda=[2^{a_2}]$ (which in fact corresponds to the Shalika case). 

(b) If all parts of $\lambda$ are odd. Note then $a_j\ge 3$. If $j\ge 3$, we must have in the graph $\Gamma$ a vertex corresponding to an $\mf{so}(a_j)$ factor, connected to two or more vertices, one corresponding to an $\mf{so}(a_j+r)$ factor and one corresponding to $\mf{sl}(a_j+r')$ for $r\ge r' \ge 0$. An inspection of Table 5 of \cite{KVS} shows that this is not possible. (Note that the partition $[3,3,3]$ appears to be a possible low-rank exception, but is not, due to the difference in embedding between $\mf{so}(3)\hookrightarrow \mfsl(3)$ and $\mfsl(2)\hookrightarrow \mfsl(3)$.)

Hence $j=1$. Now if there are $r\ge 2$ other rows of length $>1$ in $d$, then one must have in $\Gamma$ a vertex corresponding to an $\mf{so}(a_1)$ factor, connected to a vertex corresponding to an $\mf{ so}(a_1+r)$ factor with $r\ge 2$. When $r=2$ the tables in \cite{KVS} do not preclude this but it is readily checked that the $H\bs L$ cannot be spherical (because $\O(n)\bs \O(n+2)$ is not spherical for $n\ge 1$), so in fact $r\ge 3$. An inspection of Tables 4 and 5 of \cite{KVS} shows that this is not possible. 

So there is at most 1 other row of length $>1$ in $d$, and the corresponding partitions are $[1^{a_1}]$ (trivial partition) or $[n-a_1,1^{a_1}]$ (hook-type).
\end{proof}

\subsection{Symplectic groups} Suppose now $G$ is the symplectic group $\Sp_{2n}$, acting on a $2n$-dimensional vector space $V$ equipped with an symplectic form $B$.

Again from Section \ref{NilpPartition}, the nilpotent orbits in $G$ are parameterised by the datum of the partition $\lambda=[l^{a_l},\dots, 1^{a_1}]$ of $n$, and the forms on the multiplicity spaces $(V_j,B_j)$, where odd parts must occur with even multiplicity in $\lambda$. For even nilpotent orbits, all parts of the partition $\lambda$ have the same parity. We have seen also that \[ H=M_\gamma \cong \prod_{j=1}^l G(V_j,B_j). \]

Let \[\lambda^t=[a_l+\dots+a_1,\dots, a_l]=[c_1,\dots,c_m]=[h^{b_h},\dots,1^{b_1}]\] denote the transpose partition of $\lambda$. 

If $\lambda$ has all even parts, then 
\[ H\cong \O_{a_l} \times \dots \O_{a_2}, \]
\[L\cong \GL_{h}^{\times b_h/2} \times \dots \times \GL_{1}^{\times b_1/2}.\]

If $\lambda$ has all odd parts, then 
\[ H\cong \Sp_{a_l} \times \dots \Sp_{a_1}, \]
\[L\cong \Sp_h \times \GL_{h}^{\times (b_h-1)/2} \times \dots \times \GL_{1}^{\times b_1/2}.\] 

In other words, representing $\lambda$ by a Young tableaux $d$ (in the usual way), the factors of $H$ correspond to groups of rows of the tableaux with same length, while the factors of $L$ correspond to (pairs of) columns of the tableaux with same length.

Furthermore, $H$ is embedded in $L$ such that, for a factor $G(V_j,B_j)$ of $H$ and a factor $\Sp_i$ or $\GL_i$ of $L$, the composite projection $G(V_j,B_j) \hookrightarrow H \hookrightarrow L \twoheadrightarrow \Sp_i$ or $\GL_i$ is non-zero if and only if the corresponding rows and columns in $d$ share common cells. 

Now we have the analogous result to Theorem \ref{thm:OrthogonalClassification}. 

\begin{thm}\label{thm:SymplecticClassification}
Let $G$ be the symplectic group $\Sp_{2n}$ and $M$ be a hyperspherical $G$-variety. By Theorem \ref{MainStructureThm}, it is obtained as the Whittaker induction along a map $H\times \SL_2\rightarrow G$. Let $\gamma$ be the nilpotent orbit determined by the $\SL_2$ factor. If $\gamma$ is even, then it corresponds to a partition $\lambda$ of the form 
\begin{itemize}
    \item $[2^{a_2}]$ (Shalika)
    \item the `exceptional' partitions \[ [3,3,1^{2a}], [5,5,1^{2a}] \] for $a\ge 0$ 
    \item $[1^{2n}]$ (trivial orbit) or $[2n]$ (regular orbit). 
\end{itemize} 
\end{thm}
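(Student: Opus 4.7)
The plan is to mirror the strategy used in the proof of Theorem \ref{thm:OrthogonalClassification}, adapting the group-theoretic analysis to the symplectic setting. By Proposition \ref{prop:HypersphericalCriterion}, hypersphericality forces $H\backslash L$ to be a smooth affine spherical $L$-variety, where $L$ is the Levi of the parabolic $P=LU$ attached to $\gamma$ and (without loss of generality) $H=M_\gamma$. Together with the dimensional constraint $\dim L - \dim H \le \dim B_L$, and the explicit descriptions recorded above of $H$ and $L$ as products of symplectic, orthogonal and general linear factors dictated by $\lambda$, this reduces the problem to a combinatorial inspection of the Knop--Van Steirteghem classification of smooth affine indecomposable spherical pairs in Tables 4 and 5 of \cite{KVS}.

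First I would dispose of the degenerate case in which $\mf h$ is trivial. A short dimensional analysis then isolates precisely $\lambda = [1^{2n}]$ (zero orbit: $L = G$ and $H\backslash L$ is a point) and $\lambda = [2n]$ (regular orbit: $L$ is a torus, recovering the standard Whittaker model), both of which yield \textit{bona fide} hyperspherical varieties. Assume henceforth that $H$ has a nontrivial factor $H_j$; since $\gamma$ is even, all parts of $\lambda$ share a common parity.

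In case (a), all parts even, $H$ is a product of orthogonal groups $\O_{a_j}$ and $L$ a product of $\GL$'s. For $j \ge 4$ the $\mf{so}(a_j)$-vertex in the graph $\Gamma$ of \cite{KVS} must be adjacent to two or more $\mfsl$-vertices, and inspection of Table 5 of \cite{KVS}, combined with the dimensional inequality, rules out all such configurations. For $j = 2$ the same inspection together with a dimensional check in the borderline case (analogous to the $[4,4,2,2]$ case in the orthogonal proof) forces $\lambda = [2^{a_2}]$, the Shalika partition. In case (b), all parts odd (with even multiplicities), $H$ is a product of $\Sp_{a_j}$'s and $L$ acquires an additional $\Sp_h$-factor from the longest column of the Young diagram. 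For the largest part $j \ge 3$ the $\mf{sp}(a_j)$-vertex of $\Gamma$ is adjacent either to an $\mf{sp}(a_j + r)$-vertex (through the $\Sp_h$-factor of $L$) or to $\mfsl$-vertices (through the $\GL$-factors of $L$); a careful matching of the relevant entries of Tables 4 and 5 of \cite{KVS} with the constraint that $H\hookrightarrow L$ be the canonical embedding prescribed by the Young tableau yields precisely the two infinite families $\lambda = [3,3,1^{2a}]$ (arising from the spherical pair $\mf{sp}(2)\oplus \mf{sp}(2a)\subset \mf{sp}(2+2a)$, with an auxiliary $\GL_2$) and $\lambda = [5,5,1^{2a}]$ (from an analogous rank-$5$ entry), while the subcase $j = 1$ alone reduces to $\lambda = [1^{2n}]$, already handled.

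The main obstacle is case (b): the extra symplectic factor $\Sp_h$ of $L$, which has no counterpart in the orthogonal setting, opens up genuinely new families of spherical pairs in the tables of \cite{KVS}, producing the two infinite exceptional families $[3,3,1^{2a}]$ and $[5,5,1^{2a}]$. The bulk of the work lies in systematically traversing Tables 4 and 5 of \cite{KVS} to check which entries are compatible with the very specific embedding of $H$ into $L$ dictated by the Young tableau combinatorics, and in using the dimensional inequality (together with direct sphericity checks for borderline small-rank quotients, in the spirit of the use of $\O_n\backslash \O_{n+2}$ in the orthogonal argument) to eliminate the remaining marginal cases.
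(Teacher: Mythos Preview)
Your overall strategy is the same as the paper's: invoke Proposition~\ref{prop:HypersphericalCriterion}, then combine the explicit product decompositions of $H$ and $L$ with the Knop--Van~Steirteghem tables and the dimensional inequality. However, your treatment of the degenerate case is mis-sorted. The partition $\lambda=[1^{2n}]$ has $H=M_\gamma=\Sp_{2n}$, so $\mf h$ is very much \emph{non}-trivial; this case belongs to your case~(b) (all parts odd, largest part $j=1$), not to the $\mf h$-trivial analysis. Conversely, the partition $[2,2]$ has $H=\O_2$, whose commutator subalgebra \emph{is} trivial, so it should appear in the degenerate case --- and the paper correctly isolates it there. Your degenerate case thus misses $[2,2]$, while your case~(a), which presupposes a factor $H_j$ with nontrivial $\mf h_j$, only captures $[2^{a_2}]$ for $a_2\ge 3$; the Shalika family is therefore incomplete in your argument as written.

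A second point: in case~(b) the paper does not organize the analysis around the largest part $j$ as you do, but rather makes the single structural observation that since all parts of $\lambda$ are odd (with even multiplicities), every part of $\lambda^t$ is even, whence \emph{every} factor of both $\mf h$ and $\mf l$ has nontrivial commutator subalgebra (they are all $\mf{sp}$ or $\mfsl$ factors of rank $\ge 1$). This forces every vertex of $\Gamma$ to be present, which immediately and sharply constrains the admissible graphs in Table~5 of \cite{KVS} and leads directly to the short list $[3,3],[5,5],[3,3,1^{2a}],[5,5,1^{2a}]$. Your route via the largest part would also work but requires more case-checking.
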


\begin{rem}
The hook-type partitions are not included for the symplectic group as they correspond to non-even nilpotent orbits; they correspond to Fourier-Jacobi models \cite{GGP} and will also be studied later in Section \ref{sec:HookType}, cf. Remark \ref{rem:SymplecticHookType}. 
\end{rem}

\begin{proof}
Keep the notation of the proof of Theorem \ref{thm:OrthogonalClassification}. 

First, if $\mf h$ is trivial, then $\mf l$ must be trivial or some copies of $\mfsl_2$. In the former case, it is easy to see that one can only have the regular nilpotent orbit. In the latter case, a simple dimensional consideration shows that we can have at most one copy of $\mfsl_2$, and then that we can only have the partition $[2,2]$.

Therefore assume henceforth that $H$ has a factor $H_j=G(V_j,B_j)$ with non-trivial $\mf h_j$, corresponding to $a_j$ rows of $d$ with the same length $j$. 


(a) If all parts of $\lambda$ are even. Note then $a_j\ge 3$. If $j\ge 4$, we must have in the graph $\Gamma$ a vertex corresponding to an $\mf{so}(a_j)$ factor, connected to two or more vertices each corresponding to $\mf{sl}(a_j+r_1),\mf{sl}(a_j+r_2)$ factors for $r_1,r_2\ge 0$. An inspection of Table 5 of \cite{KVS} shows that this is not possible. 

Hence $j=2$. Now if there are $r\ge 2$ other rows of length $>1$ in $d$, then one must have in $\Gamma$ a vertex corresponding to an $\mf{so}(a_1)$ factor, connected to a vertex corresponding to an $\mf{ sl}(a_1+r)$ factor with $r\ge 1$. An inspection of Tables 4 and 5 of \cite{KVS} shows that this is not possible. 

So we are left with the partition $\lambda=[2^{a_2}]$ (which in fact corresponds to the Shalika case). 

(b) If all parts of $\lambda$ are odd. Note that then all parts of $\lambda^t$ are even. It follows that all factors of $\mf h$ and $\mf l$ are non-trivial (they are all $\mf{sp}$ and $\mf{sl}$ factors). 

Considering the possible graph structures of $\Gamma$ and examining Table 5 of \cite{KVS}, then, one sees that there are only the following possible partitions: \[ [3,3], [5,5], [3,3,1^{2a}], [5,5,1^{2a}]. \]

\end{proof}

\subsection{Non-even orbits} We end this section by considering the case of non-even nilpotent orbits. Continuing the hypothesis of Theorems \ref{thm:OrthogonalClassification} and \ref{thm:SymplecticClassification}, we have:

\begin{prop}\label{prop:Noneven} (Non-even nilpotent orbits)
Let $G$ be either the orthogonal or the symplectic group, and $M$ be a hyperspherical $G$-variety. By Theorem \ref{MainStructureThm}, it is obtained as the Whittaker induction along a map $H\times \SL_2\rightarrow G$. Let $\gamma$ be the nilpotent orbit determined by the $\SL_2$ factor. Suppose $\gamma$ is not necessarily even, and has corresponding partition $\lambda$. 

Let $\lambda_{even}$ (resp. $\lambda_{odd}$) be the partitions formed by taking only the even (resp. odd) parts of $\lambda$. 

Then $\lambda_{even}$ and $\lambda_{odd}$ must each be of a form listed in Theorems \ref{thm:OrthogonalClassification} or \ref{thm:SymplecticClassification} respectively (according as $G$ is orthogonal or symplectic). 
\end{prop}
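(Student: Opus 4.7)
The plan is to reduce the non-even case to the already-treated even case by splitting $V$ according to the parity of the $h$-weights, which will split the Levi $L$, the reductive centralizer $M_\gamma$, and the quotient $H \bs L$ into independent ``even'' and ``odd'' factors that can each be analysed by the existing theorems.

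First I would observe, from standard $\mfsl_2$-theory, that the $h$-weights on the irreducible summand $W_j$ are $j-1, j-3, \dots, -(j-1)$, which are all odd when $j$ is even and all even when $j$ is odd. Hence $V_{even} := \bigoplus_{j \text{ even}} V^{(j)}$ carries only odd $h$-weights, while $V_{odd} := \bigoplus_{j \text{ odd}} V^{(j)}$ carries only even $h$-weights, so both subspaces are $h$-stable. Because the form $B$ pairs weight $w$ with weight $-w$, it restricts nondegenerately to each of $V_{even}$ and $V_{odd}$, and these subspaces are $B$-orthogonal to each other. A routine check using the types of the $\mfsl_2$-invariant forms on $W_j$ and $V_j$ (as recorded in Section \ref{NilpPartition}) shows that $(V_{even}, B|_{V_{even}})$ and $(V_{odd}, B|_{V_{odd}})$ are each of the same type ($\O$ or $\Sp$) as $(V, B)$.

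Next I would transport this decomposition to the Levi and the centraliser: since $L$ preserves each $h$-weight space and $M_\gamma$ further preserves each isotypic $V^{(j)}$, we obtain
\[ L \cong L_{even} \times L_{odd}, \qquad M_\gamma \cong H_{even} \times H_{odd}, \]
with the embedding $M_\gamma \hookrightarrow L$ respecting both product decompositions. Here $(L_{even}, H_{even})$ is exactly the Levi-and-centraliser pair associated in Section \ref{NilpPartition} to the nilpotent orbit with partition $\lambda_{even}$ inside $G(V_{even})$, and similarly for $(L_{odd}, H_{odd})$ with $\lambda_{odd}$ in $G(V_{odd})$. Consequently one has an isomorphism of homogeneous spaces
\[ H \bs L \cong (H_{even} \bs L_{even}) \times (H_{odd} \bs L_{odd}). \]

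To finish, I would invoke Proposition \ref{prop:HypersphericalCriterion}: hypersphericality of $M$ forces $H \bs L$ to be spherical. Sphericality of a product of homogeneous varieties for a product of reductive groups requires each factor to be spherical (as a Borel subgroup of the product is a product of Borels, and the dense orbit on the product projects to dense orbits on each factor). Thus $H_{even} \bs L_{even}$ and $H_{odd} \bs L_{odd}$ are each spherical. Since $\lambda_{even}$ has all even parts and $\lambda_{odd}$ has all odd parts, each yields an \emph{even} nilpotent orbit in the appropriate smaller classical group of the same type as $G$; and the proofs of Theorems \ref{thm:OrthogonalClassification} and \ref{thm:SymplecticClassification} use only the sphericality of $H \bs L$ as input to the classification of \cite{KVS}. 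So these theorems apply verbatim to $\lambda_{even}$ and $\lambda_{odd}$, which gives the claim.

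The only substantive point to be careful about is the compatibility of the decomposition on three simultaneous levels (the form, the Levi, the reductive centraliser) together with the verification that the induced forms on $V_{even}$ and $V_{odd}$ remain of the same $\O$/$\Sp$ type as $(V,B)$. These are routine consequences of $\mfsl_2$-theory, so no new difficulty arises beyond the even case; the argument is purely a structural reduction.
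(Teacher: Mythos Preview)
Your proposal is correct and follows essentially the same approach as the paper: both argue that $H\bs L$ factors as $(H_{even}\bs L_{even})\times(H_{odd}\bs L_{odd})$, that sphericality of a product forces sphericality of each factor, and that the proofs of Theorems \ref{thm:OrthogonalClassification} and \ref{thm:SymplecticClassification} then constrain $\lambda_{even}$ and $\lambda_{odd}$ separately. The paper simply asserts the product decomposition in one line, whereas you supply the underlying $\mfsl_2$-theoretic justification (via parity of $h$-weights and compatibility of the forms), but the substance is identical.
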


\begin{proof}
Note that $H\bs L$ decomposes as a direct product $H_{even}\bs L_{even} \times H_{odd} \bs L_{odd}$, corresponding to the even and odd parts respectively of $\lambda$, and that $H\bs L$ is spherical $\iff$ $H_{even}\bs L_{even}$ and $H_{odd} \bs L_{odd}$ are both spherical. 

It follows that the even and odd parts of $\lambda$ must be of the forms listed in Theorems \ref{thm:OrthogonalClassification} or \ref{thm:SymplecticClassification}.

\end{proof}

\begin{example} (Hook-type for symplectic groups)
For instance, the hook-type partitions $\lambda=[2a,1^{2b}]$ for the symplectic groups has even part $\lambda_{even}=[2a]$ and odd parts $\lambda_{odd}=[1^{2b}]$, both of which are listed in Theorem \ref{thm:SymplecticClassification}. 

\end{example}

We will see several more examples in Section \ref{sec:ExceptionalPartitions}, cf. Expectation \ref{ex:Exceptional}. 

\subsection{The $G\times M_\gamma$ case}
\begin{rem}\label{rem:NoIsotypic3} In \cite{FU}, a similar classification of nilpotent orbits is given in the case of hyperspherical $G\times M_\gamma$-varieties; see Remarks \ref{rem:NoIsotypic} and \ref{rem:NoIsotypic2}. That is, they essentially work with the datum \[  M_\gamma \times \SL_2 \rightarrow G\times M_\gamma \quad \text{and} \quad S = 0, \] 
with $M_\gamma$ diagonally embedded, and classify the nilpotent orbits which give rise to hyperspherical varieties. 

This is a slightly stronger condition than the case we are considering in this paper; in the language of our proof, one would require that $H^\Delta\bs (L\times H)$ be a spherical $(L\times H)$-variety, and one could then obtain a classification along the same lines as in our proof. 

Their proof proceeds via a simple dimensionality criterion for hypersphericality, which, to the best of our knowledge, is not as readily applicable in our case, since the varieties in their case take the (relatively) simple form $G\times (f+\mf g^e)$. Furthermore, our proof (by the results of \cite{BZSV} used in Proposition \ref{prop:HypersphericalCriterion}) does not place any restriction on the choice of $S$ (which would also expand the number of possible nilpotent orbits).

Heuristically, since the coisotropic property corresponds to the multiplicity-one property for representations, one would expect that if one does not have multiplicity-one for a $(H,\rho)$-isotypic subspace of the generalised Whittaker representation $W_{\gamma,\psi}$ (which is the case we are considering in this paper), then one would not expect multiplicity-one for $W_{\gamma,\psi}$ as a $G\times H$-module. This explains the sense in which the condition in \cite{FU} is stronger. 

Notably, our classification includes the Shalika case as well as several larger `exceptional' cases, whereas the classification in \cite{FU} essentially gives the `hook-type' case and several smaller `exceptional' cases (for which the corresponding generalised Whittaker models have been studied in \cite{WZ}). See Remark \ref{rem:NoIsotypic2} for some remarks on the `hook-type' case in this situation.

\end{rem}

\section{Theta correspondence}\label{sec:ThetaCorrespondence}

 In this section, we review the theory and results surrounding the theta correspondence, in preparation for the next section, where the expected functorial lifting of representations will be facilitated by the theta lift. 
 
 Throughout this paper, we have fixed a non-trivial unitary character $\psi : F\rightarrow \C^\times$.

\subsection{Howe duality}

Suppose $(G_1,G_2)$ is a type I reductive dual pair; for our purposes we will take $G_1,G_2$ the isometry groups of a split orthogonal vector space $(V_1,B_1)$ and a symplectic vector space $(V_2,B_2)$ (or vice versa). If $\dim V_1$ is odd then in what follows we understand that we will have to work with representations of ${\rm Mp}(V_2)$ instead of $\Sp(V_2)$. We suppose also that $G_1$ is the smaller group of the two. 

One may restrict the Weil representation $\omega_\psi$ of ${\rm Mp}(V_1\otimes V_2)$ (corresponding to the character $\psi$) to $G_1\times G_2$. For each $\pi\in \Irr (G_1)$ define the \textit{big theta lift} $\Theta_\psi(\pi)$ of $\pi$ by \[\Theta_\psi(\pi):=(\omega_\psi \otimes \pi^\vee)_{G_1},\] the maximal $G_1$-invariant quotient of $\omega_\psi\otimes \pi^\vee$. Since $\psi$ is fixed, in what follows we shall sometimes drop the subscript $_\psi$ where there is no danger of confusion.

It is known that $\Theta(\pi)$ is either zero or has finite length with a unique irreducible quotient, which we denote $\theta(\pi)$ and call the \textit{small theta lift} of $\pi$. In fact we summarise the key result as follows:

\begin{thm}\label{thm:HoweDuality} (Howe duality)
Let \[ C = \{ (\pi_1,\pi_2) \in \Irr(G_1)\times \Irr(G_2) \mid \pi_1\otimes\pi_2\text{ is a quotient of }\omega_\psi \}. \]
Then $C$ is the graph of a \textit{bijective} (partially-defined) function between $\Irr(G_1)$ and $\Irr(G_2)$. 

Furthermore, we have \[ \dim \Hom(\omega_\psi, \pi_1\otimes\pi_2) \le 1\] for all $\pi_1\in \Irr(G_1), \pi_2\in \Irr(G_2)$. 
\end{thm}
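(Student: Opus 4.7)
The plan is to prove the theorem by breaking it into three assertions, following the strategy that goes back to Howe, with the $p$-adic non-Archimedean case completed by Waldspurger and Gan--Takeda. The first is that $\Theta_\psi(\pi_1)$ is of finite length as a smooth $G_2$-representation (so, when nonzero, possesses at least one irreducible quotient); the second is the multiplicity-one bound $\dim \Hom(\omega_\psi, \pi_1 \otimes \pi_2) \le 1$; the third is that the irreducible quotient of $\Theta_\psi(\pi_1)$, when it exists, is unique. Combined with the symmetric statements obtained by exchanging the roles of $G_1$ and $G_2$, these yield a well-defined map $\pi_1 \mapsto \theta_\psi(\pi_1)$ together with an inverse, so that $C$ is the graph of a bijection on its domain of definition.

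For the finite-length assertion, the key input is Kudla's description of the Jacquet modules of $\omega_\psi$ along maximal parabolics $P = MN$ of $G_2$: these admit a finite filtration whose successive quotients are built out of Weil representations for smaller dual pairs, twisted by principal series of Levi subgroups. Iterating this computation and using the MVW involution to relate a representation to its contragredient, one deduces that $\Theta_\psi(\pi_1)$ is admissible and of finite length. For the multiplicity-one bound, the most natural route is the doubling method of Piatetski-Shapiro--Rallis: one realises $\omega_\psi|_{G_1 \times G_2}$ as a boundary piece of a degenerate principal series for the doubled group, which identifies the relevant $\Hom$-space with a space of invariant functionals on that principal series. Uniqueness of such functionals, established via a Bruhat-type analysis of the double cosets under the acting parabolic, then gives the bound $1$.

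The third assertion --- uniqueness of the irreducible quotient of $\Theta_\psi(\pi_1)$ --- is the step I expect to be the main obstacle. My plan is an induction on $\dim V_1$. By Kudla's filtration, any two distinct irreducible quotients of $\Theta_\psi(\pi_1)$ must, when Jacquet modules along proper parabolics are examined, be controlled by theta lifts for strictly smaller dual pairs, to which the inductive hypothesis applies; so any failure of uniqueness is pushed back to a supercuspidal situation. The base case, where $\pi_1$ is supercuspidal, is handled directly via compactness of matrix coefficients, which forces $\Theta_\psi(\pi_1)$ to be itself irreducible or zero. The technical heart of the induction is to align it with the ``conservation relation'' at the two ends of the Witt tower of dual pairs --- ensuring that first occurrence indices on both sides behave coherently --- and, in even residual characteristic, to control additional Jacquet module contributions, which is the source of the extra work in the Gan--Takeda treatment beyond Waldspurger's original argument in odd residual characteristic.
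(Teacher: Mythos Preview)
The paper does not give a proof of this statement at all. Theorem~\ref{thm:HoweDuality} is stated in Section~\ref{sec:ThetaCorrespondence} as a summary of known results from the literature (Howe's original work, completed in the non-archimedean case by Waldspurger for odd residual characteristic and by Gan--Takeda in general), and is simply used as a black box in the subsequent arguments. There is therefore no ``paper's own proof'' to compare your proposal against.

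That said, your outline is a reasonable sketch of how the theorem is actually established in the literature, with a couple of imprecisions worth flagging. First, in the supercuspidal base case it is not true that $\Theta_\psi(\pi_1)$ is automatically irreducible or zero; rather, one shows (again via Kudla's filtration and the tower structure) that it is semisimple of finite length, and then the multiplicity-one statement forces a unique irreducible quotient. Second, while the doubling method does give the multiplicity-one bound in many cases, the Gan--Takeda argument for general residual characteristic proceeds somewhat differently, reducing to a key lemma on the structure of the Weil representation restricted along certain parabolics rather than invoking doubling directly. Finally, your inductive step is phrased a bit loosely: the conservation relation is an ingredient, but the actual mechanism is a careful comparison of cuspidal supports via Kudla's filtration on both members of the dual pair simultaneously. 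None of this affects the broad shape of your plan, but if you intend this as more than a literature summary you would need to tighten these points.
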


We note also that by results of \cite{Sa2}, Howe duality may also be formulated in the $L^2$-setting. He showed in particular that the spectral support of $\hat{\omega_\psi}$ (unitary completion of $\omega_\psi$) is contained in the tempered dual of $G_1$.

\subsection{Functoriality} As mentioned, the theta lift is, for us, a means to realise the expected functorial lifting of representations from $G_1$ to $G_2$. We now make more precise what this means. Recall that we are assuming that $G_1$ is the smaller group of the two, and that we are working with split orthogonal vector spaces throughout. 

The central result in this regard is Adams' conjecture; by the recent results of \cite{BH}, we have

\begin{thm}\label{thm:Adam} (Adams' conjecture)
Suppose $\dim V_1$ and $\dim V_2$ are even. Suppose $\pi\in \Irr(G_1)$ has corresponding A-parameter $\phi$. Then for all \textit{sufficiently large} $\dim V_2$, the theta lift $\theta_\psi(\pi)\in \Irr(G_2)$ (is non-zero and) has corresponding A-parameter \[ \phi \oplus W_{\dim V_2 - \dim V_1 - 1}\] where $W_{\dim V_2 - \dim V_1 - 1}$ is the irreducible $\SL_2$-representation of dimension $(\dim V_2 - \dim V_1 - 1)$.
\end{thm}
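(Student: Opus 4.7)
The plan is to proceed by induction on $\dim V_2$ along the Witt tower, using as main inputs Moeglin's construction of A-packets for classical groups in terms of theta lifts, the Kudla--Rallis conservation relation (as proven by Sun--Zhu), and the tower property of the theta correspondence. Throughout, the strategy exploits that the theta lift is compatible with parabolic induction in a precise (Jacquet-module-theoretic) sense, together with the fact that A-parameters are constructed from tempered building blocks by a Langlands-style procedure.

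First, I would reduce to the case where $\pi$ is tempered, and ultimately a discrete series or supercuspidal representation. If $\pi$ is an irreducible constituent of $\mathrm{Ind}_P^{G_1} \tau$ for a tempered $\tau$ on a Levi, then by the Kudla filtration one can identify $\theta_\psi(\pi)$ as a constituent of an analogous parabolic induction on $G_2$ with Levi factor that is either $\GL$-type (contributing to $\phi$ unchanged) or the theta lift of the smaller classical factor. This reduces the computation of the A-parameter to the case of tempered $\pi$, where Moeglin's construction of tempered A-packets via iterated theta liftings provides the essential input.

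Next, for non-vanishing in the stable range: once $\dim V_2$ is sufficiently large relative to $\dim V_1$, Kudla's stable range argument forces $\Theta_\psi(\pi) \neq 0$, and moreover guarantees that $\Theta_\psi(\pi)$ is irreducible and equal to $\theta_\psi(\pi)$. The conservation relation then pins down the first occurrence indices along the two Witt towers (the split and the non-split). For the computation of the A-parameter, I would first identify the lift at the first occurrence: here Moeglin's work shows that the resulting A-parameter is obtained from $\phi$ by adjoining an $\SL_2$ factor whose dimension is dictated by the first-occurrence index. Then the inductive step consists of going up the tower one hyperbolic plane at a time; at each step, a Jacquet module computation (following Kudla's filtration) identifies the new A-parameter as the previous one with the $\SL_2$ component enlarged by one dimension. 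Iterating gives the $W_{\dim V_2 - \dim V_1 - 1}$ factor predicted by the theorem.

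The main obstacle is the base case: correctly identifying the A-parameter at the first occurrence, which is delicate because it involves matching component group characters (via the local Langlands correspondence of Arthur--Moeglin for classical groups) and requires ruling out ``accidental'' extra $\SL_2$ factors appearing beyond the expected one. The inductive tower step is comparatively routine once one has the Jacquet module formulas, but verifying that the induction actually starts at the predicted first occurrence (rather than a deeper one) requires the full strength of the conservation relation together with Moeglin's explicit parametrization of tempered A-packets. Handling non-tempered $\pi$ and metaplectic corrections (in the odd-orthogonal case, which is excluded here but conceptually parallel) would further require bookkeeping with Arthur's sign character on the component group.
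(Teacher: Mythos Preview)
The paper does not prove this theorem. Theorem~\ref{thm:Adam} is stated as a quotation of a result from \cite{BH} (Baki\'c--Hanzer), introduced with the phrase ``by the recent results of \cite{BH}, we have'' and followed by the remark that ``Adams' conjecture is proven in \cite{BH} in the case where $\dim V_1, \dim V_2$ are even.'' There is no proof in the paper to compare against.

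Your sketch is a plausible outline of the kind of argument that appears in the literature on Adams' conjecture (reduction via Kudla's filtration, conservation relation, Moeglin's explicit description of A-packets, induction up the Witt tower), and indeed the work of Baki\'c--Hanzer proceeds broadly along these lines. But as far as this paper is concerned, the statement is a black-box input, not something the authors establish, so there is nothing here for your proposal to match or diverge from.
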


Here $\dim V_2$ is \textit{sufficiently large} in the sense that it is at least the larger of two ``first occurrence indices" attached to the dual pair and $\pi$ (and $\psi$); to avoid excessive technicality, we do not define this precisely here, but suffice it to note that this will not pose any problems in the cases we will consider in Section \ref{sec:HookType}, in particular for generic representations $\pi$. We refer the reader to \cite{BH} (or other treatments of the standard theory of theta correspondence, such as \cite{Ga}), for details on the first occurrence indices. 

Finally, we note here that although Adams' conjecture is proven in \cite{BH} in the case where $\dim V_1,\dim V_2$ are even, it is expected in \cite{BH} that the same results will hold in all cases. 

\subsection{Transfer of nilpotent orbits}\label{sec:MomentMap} One would like to use the theta correspondence to relate two generalised Whittaker models on each member of a dual pair. To do so, one needs a correspondence of nilpotent orbits, and this is facilitated by a double fibration via moment maps. We refer to \cite{GZ}, \cite{Zh} for the details (cf. also \cite{DKP}) and state only the result here.

For the sake of simplifying notation, let us henceforth replace $G_1$ and $G_2$ with $G$ and $G'$ respectively, and similar for all other notation. 

\begin{prop}\label{prop:TransferNilpOrbit} (Transfer of nilpotent orbits via moment map)

One has moment maps \[ \g \xleftarrow[]{\phi} \Hom(V',V) \xrightarrow[]{\phi'} \g' \] 
defined by \[\phi(f) = f f^\ast,\] \[\phi'(f) = f^\ast f,\] where $f^\ast$ denotes the adjoint of the linear map $f$. 

Given a nilpotent element $e$ in the image of $\phi$ corresponding to a $\mfsl_2$-triple $\gamma$, one may uniquely define a nilpotent orbit/conjugacy class of $\mfsl_2$-triple $\gamma'$ of $\g'$ (with corresponding nilpotent element $e'\in\g'$) such that:
\begin{itemize}
    \item $e,e'$ are the images of some common element $f\in\Hom(V',V)$;
    \item the form on $V'$ restricts to a nondegenerate form on $\ker f$ (including if $\ker f = 0$),
    \item and $f$ sends the $k$-weight space of $V'$ to the $(k+1)$-weight space of $V$ for all $k\in \Z$ (here the weight spaces are under the $\mfsl_2$ action coming from $\gamma,\gamma'$). 
\end{itemize}

The partitions corresponding to $\gamma,\gamma'$ are related in the following way: suppose their corresponding Young tableaux are $d,d'$ respectively. Then one removes the first column of $d$ and adds suitably many rows of length 1, to obtain $d'$. 

Furthermore, recall from Section \ref{NilpPartition} that the nilpotent orbits of $\g,\g'$ are parameterised also by (symplectic or orthogonal) forms $B_j, B'_j$ on the multiplicity spaces $V_j, V'_j$. To obtain the corresponding forms $B'_j$ for $\gamma'$, the forms $B_j$ from $\gamma$ are left unchanged, (and the form $B'_1$ corresponding to the rows of length 1 in $d'$ is determined by the compatibility condition of Section \ref{NilpPartition}). 

In other words, one has \[ (V'_j,B'_j)  = (V_{j+1},B_{j+1}) \quad \text{for $j \geq 2$} \]
and \[ V'_1 =  V_2 \oplus V_{new} \] for a subspace $V_{new}$ corresponding to the \textit{newly added} rows of length 1 in $d'$. In fact, $V_{new}=\ker f$.

See Figure \ref{fig:TransferNilpOrbits} for an illustration. 

\begin{figure}[!h]
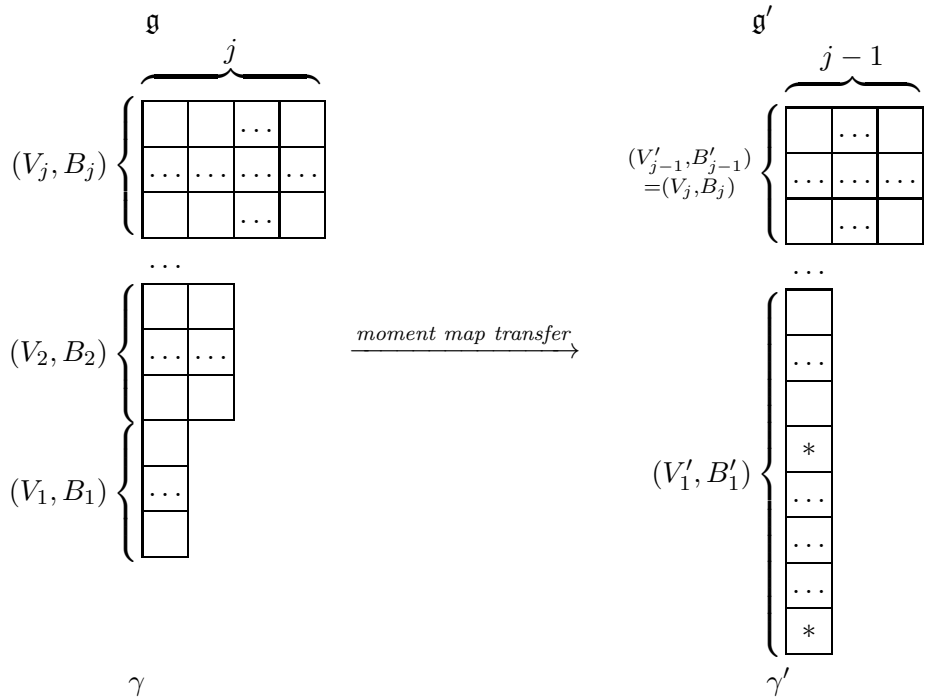

    \begin{center}
$\mf{g} \quad\quad\quad\quad\quad\quad\quad\quad\quad\quad\quad\quad\quad\quad\quad\quad\quad\quad\quad\quad \mf{g}'$
    
   \ytableausetup {mathmode,boxframe=normal,boxsize=1.5em,centertableaux} 
   \begin{tabular}{r@{}l}
   & $\overbrace{\hspace{6em}}^{\displaystyle j}$\\
   \begin{tabular}{r@{}l} $(V_j, B_j)\left\{\vphantom{\begin{ytableau}
    \quad \\ \quad  \\ \quad
 \end{ytableau}}\right.$ \\ \vspace{0.5em} \\
 $(V_2, B_2)\left\{\vphantom{\begin{ytableau}
    \quad \\ \quad  \\ \quad
 \end{ytableau}}\right.$ \\ $(V_1, B_1)\left\{\vphantom{\begin{ytableau}
    \quad \\ \quad  \\ \quad
 \end{ytableau}}\right.$ \end{tabular} & \begin{ytableau} \quad & \quad &\dots &\quad \\ \dots & \dots &\dots &\dots \\ \quad & \quad &\dots &\quad \\ \none[\dots] \\ \quad & \quad \\ \dots &\dots \\ \quad &\quad \\ \quad \\ \dots \\ \quad  \end{ytableau} \\ \\ \\ \\
   \end{tabular} $\xrightarrow{\text{moment map transfer}}$
\ytableausetup {mathmode,boxframe=normal,boxsize=1.5em,centertableaux} 
   \begin{tabular}{r@{}l} 
   & $\overbrace{\hspace{4.5em}}^{\displaystyle j-1}$\\
   \begin{tabular}{r@{}l} $\substack{(V'_{j-1}, B'_{j-1}) \\ = (V_j, B_j)}  \left\{\vphantom{\begin{ytableau}
    \quad \\ \quad  \\ \quad
 \end{ytableau}}\right.$ \\ \vspace{0.5em} \\
 $(V'_1, B'_1)\left\{\vphantom{\begin{ytableau}
    \quad \\ \quad  \\ \quad \\ \quad \\ \quad  \\ \quad \\ \quad \\ \quad
 \end{ytableau}}\right.$  \end{tabular} & \begin{ytableau} \quad &\dots &\quad \\  \dots &\dots &\dots \\  \quad &\dots &\quad \\ \none[\dots] \\   \quad \\ \dots \\ \quad \\ \ast \\ \dots \\ \dots \\ \dots \\ \ast  \end{ytableau}
   \end{tabular}

$\gamma \quad\quad\quad\quad\quad\quad\quad\quad\quad\quad\quad\quad\quad\quad\quad\quad\quad\quad\quad\quad\quad \gamma'$

\end{center}
\vspace{1em}
\caption{An illustration of the transfer of nilpotent orbits via the moment maps, in terms of the Young tableaux $d$ and $d'$. \\ $\ast$ indicates the newly added rows of length 1 in $d'$. }\label{fig:TransferNilpOrbits}
\end{figure}

\end{prop}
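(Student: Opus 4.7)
The plan is to construct $f \in \Hom(V',V)$ compatible with the $\mfsl_2$-grading and then read off the combinatorial data of $\gamma'$ from the relation between $ff^*$ and $f^*f$.

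The first step is to produce $f$. Starting from any $f_0$ with $f_0 f_0^* = e$, one observes that $f_0^* f_0 \in \g'$ is automatically nilpotent because $(f^* f)^{k+1} = f^* (ff^*)^k f$. Let $\gamma = \{e, h, f_\gamma\}$ be a Jacobson--Morozov completion of $e$ (writing $f_\gamma$ for the lower-triangular element, to avoid conflict with $f \in \Hom(V',V)$). Using the action of the centralizer of $e$ in $G$ (equivalently, modifying $f_0$ within its $G \times G'$-orbit in $\phi^{-1}(e)$), one adjusts $f_0$ to a map $f$ that intertwines a chosen $\mfsl_2$-grading $V' = \bigoplus V'(k)$ (coming from a JM-triple $\gamma'$ completing $e' := f^*f$) with the grading on $V$, shifted by one: $f(V'(k)) \subseteq V(k+1)$. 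This is the main technical point; it requires a Kraft--Procesi-style normal form for bilinear maps interacting with nilpotents, carried out for the orthogonal--symplectic setting in \cite{GZ}, \cite{Zh}.

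Once $f$ is weight-compatible, the combinatorial description follows by inspection. Under the isotypic decompositions $V = \bigoplus_j W_j \otimes V_j$ and $V' = \bigoplus_j W_j \otimes V'_j$, the shift condition forces $f$ to send each $W_j \otimes V'_j$ into $W_{j+1} \otimes V_{j+1}$, inducing an identification $V'_j \cong V_{j+1}$ for $j \geq 2$. In Young-diagram language, this is precisely the operation of stripping the first column of $d$. The subspace on the $V'$-side not hit by weight-raising into $V$ is exactly $\ker f$, which contributes new rows of length $1$ in $d'$; hence $V'_1 = V_2 \oplus \ker f$, matching the claim. The forms $B'_j$ transport correctly because adjointness $\kappa(fv',w) = \kappa'(v',f^*w)$ forces the induced pairings on corresponding multiplicity spaces to agree; for $B'_1$, its restriction to $V_2 \subseteq V'_1$ equals $B_2$, and its restriction to $\ker f$ must be non-degenerate, since otherwise the radical would furnish a $\gamma'$-invariant isotropic direction incompatible with $e' \in \g'$ lying in the isometry algebra.

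Uniqueness of the $G'$-orbit of $\gamma'$ is then automatic: any two weight-compatible $f$'s with the same $ff^*=e$ differ by an element of $M_\gamma \times M_{\gamma'}$, which preserves the full $\mfsl_2$-triple structure, and so produce conjugate nilpotents $e'$. The main obstacle is the first step: simultaneously synchronizing the $\mfsl_2$-gradings on $V$ and $V'$ through $f$ without altering $ff^* = e$. Modulo this normal-form argument, the rest of the proposition is essentially a bookkeeping exercise in Young diagrams and induced bilinear forms.
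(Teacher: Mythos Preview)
The paper does not actually prove this proposition: it is stated as a summary of results from \cite{GZ}, \cite{Zh} (cf.\ also \cite{DKP}), with the explicit disclaimer ``We refer to \cite{GZ}, \cite{Zh} for the details \ldots\ and state only the result here.'' So there is no in-paper proof to compare against.

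That said, your sketch is a fair outline of the argument in those references. The key technical point you correctly identify---producing a weight-compatible $f$ via a Kraft--Procesi-type normal form for the orthogonal--symplectic dual pair---is indeed the substance of the construction in \cite{GZ}, \cite{Zh}, and once that is in place the Young-diagram combinatorics and the transport of forms fall out as you describe. One small caution: your statement that ``$f$ sends each $W_j \otimes V'_j$ into $W_{j+1} \otimes V_{j+1}$'' is slightly too coarse as written, since $f$ need not preserve isotypic components whole; what is actually true is the weight-space shift $f(V'(k)) \subseteq V(k+1)$, from which the identification of multiplicity spaces $(V'_j, B'_j) \cong (V_{j+1}, B_{j+1})$ for $j \geq 2$ is deduced by comparing highest-weight vectors. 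But this is a minor imprecision in an otherwise accurate summary.
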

    
One verifies that the regular nilpotent orbit of $\g$ is in the image of the moment map $\phi$. 

\subsection{Results of Gomez-Zhu} We now come to the result of Gomez and Zhu \cite{GZ},\cite{Zh} which relates two generalised Whittaker models via the theta correspondence. Retain the notation of the previous sections. 

For simplicity also assume that the nilpotent orbit defined by $\gamma$ is in the image of the moment map $\phi$. 

Recall from Section \ref{sec:NilpOrbits} that \[ M_{\gamma}  \cong \prod_{k=1}^j G(V_k,B_k) \] and \[ M_{\gamma'}  \cong \prod_{k=1}^j G'(V'_k,B'_k). \]

In particular $M_{\gamma}$ and $M_{\gamma'}$ contain respectively factors 
\[G(V_1,B_1)\] and \[G'(V'_1,B'_1), \] corresponding respectively to the rows of length 1 in $d$ and $d'$. 

Furthermore $G'(V'_1,B'_1)$ contains a subgroup $G'(V_{new})$, which is an isometry group of the subspace $V_{new}\subseteq V'_1$ corresponding to the \textit{newly added} rows of length 1 in $d'$ (cf. Proposition \ref{prop:TransferNilpOrbit}).

In almost all of the cases we will work with, $d$ has no rows of length 2, so that in fact $G'(V_{new}) = G'(V'_1,B'_1)$. The only exception is when $d$ is simply the partition $[2]$, corresponding to the regular nilpotent orbit of $\mf{sp}_2$. 

We have that $G(V_1,B_1)$ and $G'(V_{new})$ forms a reductive dual pair (inside $\Sp(V_1\otimes V_{new})$, and of the same type as $G$ and $G'$ respectively). 

\begin{prop}\label{prop:MainGomezZhu} (\cite[Theorem 3.7]{Zh}, \cite[Theorem 6.2]{GZ})
For any $\pi\in \Irr(G')$, and for any genuine representation $\tau \in \Irr(\widetilde{G(V_1,B_1)})$, one has \[ W_{\gamma,\tau,\psi}(\Theta_\psi(\pi)) \cong W_{\gamma',\Theta(\tau)^\vee,\psi}(\pi^\vee).\]
Here:
\begin{itemize}
    \item $\Theta(\pi)$ is the big theta lift for the dual pair $(G,G')$;
    \item $\Theta(\tau)^\vee$ is the (dual of) the big theta lift for the dual pair $(G(V_1,B_1), G'(V_{new}))$. 
    \item Note that the isomorphism is also as $\widetilde{\big(G(V_2,B_2)\times\dots\times G(V_j,B_j)\big)}$-modules. 
    
\end{itemize}
\end{prop}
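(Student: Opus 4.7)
The plan is to reduce both sides of the claimed isomorphism to $\Hom$ spaces out of the big Weil representation $\omega$ of the dual pair $(G, G')$, and then to identify them via an explicit computation of a twisted Jacquet module of $\omega$.

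First, I would apply Frobenius reciprocity to unfold the LHS. Writing $\Theta_\psi(\pi) = (\omega \otimes \pi^\vee)_{G'}$ and $W_{\gamma,\tau,\psi}(\Theta_\psi(\pi)) = \Hom_{M_\gamma U}(\Theta_\psi(\pi), \tau \otimes \omega_\psi^{\mathrm{Heis}})$, the LHS becomes
\[
\Hom_{M_\gamma U \times G'}\bigl(\omega,\ \tau \otimes \omega_\psi^{\mathrm{Heis}} \otimes \pi\bigr),
\]
where the character $\chi_\gamma$ on $U$ is absorbed into the Heisenberg--Weil factor $\omega_\psi^{\mathrm{Heis}}$. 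A parallel unfolding of the RHS, using $\Theta(\tau) = (\omega^{\mathrm{small}} \otimes \tau^\vee)_{G(V_1,B_1)}$ for the smaller dual pair $(G(V_1,B_1), G'(V_{\mathrm{new}}))$, expresses it as a $\Hom$ space over $G \times M_{\gamma'} U'$ involving $\omega^{\mathrm{small}}$ and the Heisenberg--Weil representation $\omega_\psi^{\mathrm{Heis}'}$ attached to $\gamma'$.

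The central computational step is then to determine, as a representation of $M_\gamma \times G'$, the $(U,\chi_\gamma)$-twisted Jacquet module of $\omega$. I would realise $\omega$ in a Schr\"odinger model on a Lagrangian of $V \otimes V'$ adapted to the $\mathfrak{sl}_2$-grading $V = \bigoplus_k V_k \otimes W_k$ coming from $\gamma$: pair the strictly positive $\mathfrak{sl}_2$-weight spaces of $V$ with their duals to form a Lagrangian, and treat the weight-zero piece separately. In this model the $U$-action is explicit, via the formula $\chi_\gamma(\exp u)=\psi(\kappa(f,u))$ and the commutation relations read off from the $\mathfrak{sl}_2$-weight filtration, and the $(U,\chi_\gamma)$-twisted Jacquet quotient can be computed by a partial Fourier transform collapsing the top of this filtration. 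The outcome should be an identification
\[
(\omega)_{U,\chi_\gamma} \;\cong\; \omega^{\mathrm{small}} \otimes \omega_\psi^{\mathrm{Heis}'}
\]
as a representation of $M_\gamma \times G'$, in which $\omega^{\mathrm{small}}$ is the Weil representation of the residual dual pair $(G(V_1,B_1), G'(V_{\mathrm{new}}))$ and $\omega_\psi^{\mathrm{Heis}'}$ is the Heisenberg--Weil representation determined by $\gamma'$; its appearance reflects precisely the ``shift-by-one'' of the $\mathfrak{sl}_2$-grading from $V$ to $V'$ recorded in Proposition~\ref{prop:TransferNilpOrbit}. Substituting this back into the $\Hom$ space from the first step, and then taking $\tau$-coinvariants over $G(V_1,B_1)$, produces $\Theta(\tau)^\vee$ alongside the Whittaker data for $\gamma'$, matching the RHS.

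The main obstacle is the computation of the twisted Jacquet module in the second step. Fixing the Schr\"odinger model with a polarisation simultaneously adapted to the $\mathfrak{sl}_2$-grading on $V$ and the induced grading on $V'$, and then tracking how the partial Fourier transform realises the $\chi_\gamma$-twisted descent, is bookkeeping-heavy; this is where the detailed analysis of \cite{GZ} and \cite{Zh} enters. Once the model is fixed, however, the identification of the residual representation as $\omega^{\mathrm{small}} \otimes \omega_\psi^{\mathrm{Heis}'}$ is essentially forced by $M_\gamma \times G'$-equivariance together with the Stone--von Neumann theorem. Finally, some care is needed with smoothness and admissibility so that duals and coinvariants can be exchanged freely in the final assembly.
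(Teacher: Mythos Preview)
The paper does not prove this proposition at all: it is stated as a quotation of \cite[Theorem 3.7]{Zh} and \cite[Theorem 6.2]{GZ}, with no argument given. So there is no ``paper's own proof'' to compare against; the result is used as a black box.

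That said, your sketch is a faithful outline of the Gomez--Zhu strategy. The reduction of both sides to $\Hom$-spaces out of the big Weil representation via Frobenius reciprocity, followed by the computation of the $(U,\chi_\gamma)$-twisted Jacquet module of $\omega$ in a Schr\"odinger model adapted to the $\mathfrak{sl}_2$-grading, is exactly the shape of their argument. You are also right that the bookkeeping in the Jacquet computation is the real content, and that Stone--von Neumann pins down the residual factor once the model is fixed. One point to be careful about: your displayed identification $(\omega)_{U,\chi_\gamma} \cong \omega^{\mathrm{small}} \otimes \omega_\psi^{\mathrm{Heis}'}$ is schematic---in the actual proof the module one obtains on the $G'$-side carries the full $(M_{\gamma'}U', \omega_\psi^{\mathrm{Heis}'})$-structure (not just $G'(V_{\mathrm{new}})$), and matching this with the inducing data for $W_{\gamma',-,\psi}$ requires tracking how $M_\gamma$ and $M_{\gamma'}$ share the factors $G(V_j,B_j) \cong G'(V'_{j-1},B'_{j-1})$ for $j\ge 2$. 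Your sketch elides this, but it is where the ``transfer of nilpotent orbits'' datum from Proposition~\ref{prop:TransferNilpOrbit} is actually used, beyond just producing $\gamma'$.
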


\begin{rem}
If the nilpotent orbit defined by $\gamma$ is \textit{not} in the image of the moment map $\phi$, then one has that \[ W_{\gamma,\tau,\psi}(\Theta_\psi(\pi)) = 0 \] for all $\pi\in \Irr(G')$. 

\end{rem}

\section{`Hook-type' partitions}\label{sec:HookType}

In this section, we determine the expected hyperspherical dual for the hook-type partitions $[n-a_1,1^{a_1}]$ of $\O_n$. As laid out in Section \ref{sec:DualityExpectations}, we show that the expectations at the (smooth) local level are satisfied. 

\subsection{Even orthogonal groups}\label{sec:EvenOrthogonal}

Suppose $n=2k$ is even. Then we must have $a_1=2a+1$ odd. 

\begin{thm2}\label{thm2:EvenOrthogonal}
The hyperspherical varieties $M_1,M_2$ defined respectively by 
\begin{itemize}
    \item the datum $\O_{2a+1}\times \SL_2\rightarrow \O_{2k}$, corresponding to the nilpotent orbit with partition $[2k-2a-1,1^{2a+1}]$, and trivial $S$;\\
    
    \item and the datum $\O_{2k-2a+1}\times \SL_2\rightarrow \O_{2k}$, corresponding to the nilpotent orbit with partition $[2a-1, 1^{2k-2a+1}]$, and trivial $S$.
\end{itemize}
are dual under the expected \cite{BZSV}-duality of hyperspherical varieties. 
    
\end{thm2}

We refer to this statement as ``Theorem$^\prime$", since there is as yet no formal definition of the statement ``$M_1$ and $M_2$ are hyperspherical dual". Rather, what one has is a list of expected properties, as in Expectation \ref{ex:relLanglandsQuantizationSmooth}, and the meaning of the assertion in Theorem$^\prime$ \ref{thm2:EvenOrthogonal} is given by the precise Theorem \ref{thm:EvenOrthogonalMain}. Similar remarks apply for the subsequent sections.  

Recalling the setup of Section \ref{sec:MomentMap}, let $\gamma_1$ be the nilpotent orbit of $\mf{so}_{2k}$ corresponding under the moment map (as in Proposition \ref{prop:TransferNilpOrbit}) to a regular nilpotent orbit $\gamma_{r,1}$ of $\mf{sp}_{2k-2a}$; we know that it corresponds to the partition $[2k-2a-1,1^{2a+1}]$.

\begin{figure}[H]
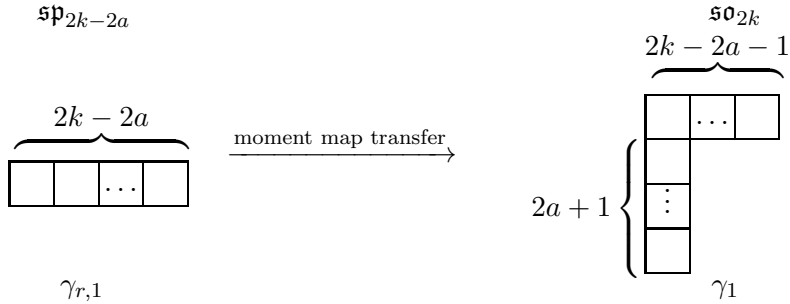

    \begin{center}
$\mf{sp}_{2k-2a} \quad\quad\quad\quad\quad\quad\quad\quad\quad\quad\quad\quad\quad\quad\quad\quad\quad\quad\quad\quad \mf{so}_{2k}$
    
   \ytableausetup {mathmode,boxframe=normal,boxsize=1.5em,centertableaux} 
   \begin{tabular}{r@{}l}
   $\overbrace{\hspace{6em}}^{\displaystyle 2k-2a}$\\
   \begin{ytableau} \quad & \quad &\dots &\quad  \end{ytableau}
   \end{tabular} $\quad\xrightarrow{\text{moment map transfer}}\quad$
\ytableausetup {mathmode,boxframe=normal,boxsize=1.5em} 
\begin{tabular}{r@{}l}
& $\overbrace{\hspace{4.5em}}^{\displaystyle 2k-2a-1}$\\
\begin{tabular}{r@{}l} \vspace{0.5em} \\
 $2a+1\left\{\vphantom{\begin{ytableau}
    \quad \\ \quad  \\ \quad
 \end{ytableau}}\right.$ \end{tabular} & \begin{ytableau} \quad&\dots &\quad \\ \quad \\ \vdots\\ \quad  \end{ytableau} 
\end{tabular}

$\gamma_{r,1} \quad\quad\quad\quad\quad\quad\quad\quad\quad\quad\quad\quad\quad\quad\quad\quad\quad\quad\quad\quad\quad \gamma_1$

\end{center}
\vspace{1em}
\caption{The Young tableaux corresponding to $\gamma_{r,1}$ and $\gamma_1$ respectively.  }
\end{figure}

Similarly, let $\gamma_2$ be the nilpotent orbit of $\mf{so}_{2k}$ corresponding under the moment map to a regular nilpotent orbit $\gamma_{r,2}$ of $\mf{sp}_{2a}$; we know that it corresponds to the partition $[2a-1, 1^{2k-2a+1}]$.

\begin{figure}[H]
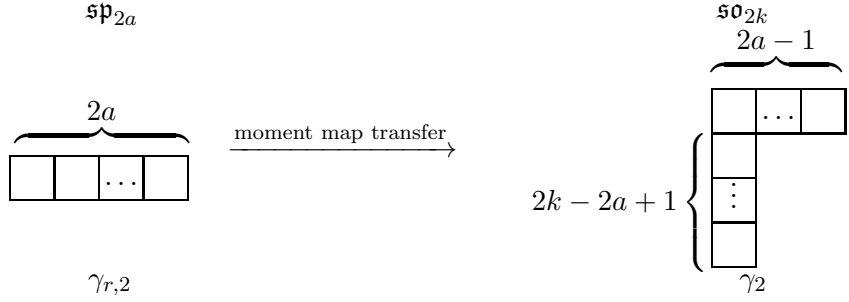

    \begin{center}
$\mf{sp}_{2a} \quad\quad\quad\quad\quad\quad\quad\quad\quad\quad\quad\quad\quad\quad\quad\quad\quad\quad\quad\quad \mf{so}_{2k}$
    
   \ytableausetup {mathmode,boxframe=normal,boxsize=1.5em,centertableaux} 
   \begin{tabular}{r@{}l}
   $\overbrace{\hspace{6em}}^{\displaystyle 2a}$\\
   \begin{ytableau} \quad & \quad &\dots &\quad  \end{ytableau}
   \end{tabular} $\quad\xrightarrow{\text{moment map transfer}}\quad$
\ytableausetup {mathmode,boxframe=normal,boxsize=1.5em} 
\begin{tabular}{r@{}l}
& $\overbrace{\hspace{4.5em}}^{\displaystyle 2a-1}$\\
\begin{tabular}{r@{}l} \vspace{0.5em} \\
 $2k-2a+1\left\{\vphantom{\begin{ytableau}
    \quad \\ \quad  \\ \quad
 \end{ytableau}}\right.$ \end{tabular} & \begin{ytableau} \quad&\dots &\quad \\ \quad \\ \vdots\\ \quad  \end{ytableau} 
\end{tabular}

$\gamma_{r,2} \quad\quad\quad\quad\quad\quad\quad\quad\quad\quad\quad\quad\quad\quad\quad\quad\quad\quad\quad\quad\quad \gamma_2$
\end{center}
\vspace{1em}
\caption{The Young tableaux corresponding to $\gamma_{r,2}$ and $\gamma_2$ respectively.  }\label{fig:EvenOrthogonalM2Transfer}
\end{figure}

Furthermore, as we have seen in Section \ref{sec:RelativeLanglandsDuality}, $M_1$ and $M_2$ have quantizations which are respectively the generalised Whittaker representations $W_{\gamma_1, \triv_1,\psi}$ and $W_{\gamma_2,\triv_2,\psi}$. Here $\triv_1,\triv_2$ are the trivial representations for the subgroups $H_1=\O_{2a+1}$ and $H_2=\O_{2k-2a+1}$ respectively. 

\begin{thm}\label{thm:EvenOrthogonalMain}
We have:
\begin{itemize}
\item If $\pi$ is an irreducible representation of $\O_{2k}$ which occurs as a quotient of $W_{\gamma_1,\triv_1,\psi}$, then $\pi=\theta_\psi(\sigma)$ for $\sigma$ an irreducible representation of $\Sp_{2k-2a}$.

Conversely, if $\sigma$ is an irreducible $\psi$-generic representation of $\Sp_{2k-2a}$, then $\pi:=\theta_\psi(\sigma)$ is an irreducible representation of $\O_{2k}$ which occurs as a quotient of $W_{\gamma_1,\triv_1,\psi}$. \\

\item If $\pi$ is an irreducible representation of $\O_{2k}$ which occurs as a quotient of $W_{\gamma_2,\triv_2,\psi}$, then $\pi=\theta_\psi(\sigma)$ for $\sigma$ an irreducible representation of $\Sp_{2a}$.

Conversely, if $\sigma$ is an irreducible $\psi$-generic representation of $\Sp_{2a}$, then  $\pi:=\theta_\psi(\sigma)$ is an irreducible representation of $\O_{2k}$ which occurs as a quotient of $W_{\gamma_2,\triv_2,\psi}$.

\end{itemize}
\end{thm}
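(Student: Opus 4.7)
The plan is to apply the Gomez--Zhu identity (Proposition \ref{prop:MainGomezZhu}) to convert each branching problem on $\O_{2k}$ into a standard Whittaker problem on a smaller symplectic group via the theta correspondence, and then to read off both directions using Howe duality.

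For the first bullet, I would take the dual pair $(\Sp_{2k-2a}, \O_{2k})$ together with the orbits $\gamma = \gamma_{r,1}$ and $\gamma' = \gamma_1$; by construction $\gamma_{r,1}$ transfers to $\gamma_1$ under the moment map. Since $\gamma_{r,1}$ has partition $[2k-2a]$ with no rows of length $1$ or $2$, the factor $G(V_1, B_1)$ of $M_{\gamma_{r,1}}$ is trivial and $V_{\mathrm{new}} = V'_1$ is the entire $(2a+1)$-dimensional orthogonal space; hence the only choice is $\tau = \triv$, and $\Theta(\tau)^\vee$ is the trivial representation of $G'(V_{\mathrm{new}}) = \O_{2a+1}$ (the inner dual pair $(\text{trivial}, \O_{2a+1})$ has a zero-dimensional underlying Weil representation). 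Proposition \ref{prop:MainGomezZhu} then supplies
\[ W_{\gamma_{r,1}, \triv, \psi}\bigl(\Theta_\psi(\pi)\bigr) \;\cong\; W_{\gamma_1, \triv, \psi}(\pi^\vee), \]
in which the left-hand side is precisely the space of $\psi$-Whittaker functionals on $\Theta_\psi(\pi)$, since $\gamma_{r,1}$ is regular.

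Both directions of the theorem are then formal. In the forward direction, if $\pi$ is a quotient of $W_{\gamma_1, \triv, \psi}$ then the right-hand side is non-zero, forcing $\Theta_\psi(\pi) \neq 0$, and Howe duality (Theorem \ref{thm:HoweDuality}) produces $\sigma := \theta_\psi(\pi) \in \Irr(\Sp_{2k-2a})$ with $\pi = \theta_\psi(\sigma)$. In the converse direction, if $\sigma \in \Irr(\Sp_{2k-2a})$ is $\psi$-generic, standard first-occurrence results guarantee that $\pi := \theta_\psi(\sigma) \neq 0$, and by Howe duality $\sigma$ is the unique irreducible quotient of $\Theta_\psi(\pi)$; composing the quotient map $\Theta_\psi(\pi) \twoheadrightarrow \sigma$ with a $\psi$-Whittaker functional on $\sigma$ yields a non-zero Whittaker functional on $\Theta_\psi(\pi)$, so the left-hand side of the displayed isomorphism is non-zero and $\pi$ is a quotient of $W_{\gamma_1, \triv, \psi}$. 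The second bullet is handled identically using the dual pair $(\Sp_{2a}, \O_{2k})$, the regular orbit $\gamma_{r,2}$ on $\Sp_{2a}$ transferring to $\gamma_2$, and the inner dual pair $(\text{trivial}, \O_{2k-2a+1})$.

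The substantive content is thus entirely packaged into Proposition \ref{prop:MainGomezZhu} and Howe duality; there is no real obstacle beyond bookkeeping: verifying that the subgroup $H = \O_{2a+1}$ (respectively $\O_{2k-2a+1}$) used in the definition of $M_1$ (respectively $M_2$) is exactly the factor $G'(V_{\mathrm{new}})$ produced by the moment-map transfer, and being attentive to the $\psi$ versus $\psi^{-1}$ conventions when translating non-vanishing of $W_{\gamma_{r,1}, \triv, \psi}(\Theta_\psi(\pi))$ into genericity of $\sigma$. Adams' conjecture (Theorem \ref{thm:Adam}) is not logically required for the equivalence stated, but it would further identify the A-parameter of the $\sigma$ produced above with the parameter predicted by the conjectural hyperspherical duality.
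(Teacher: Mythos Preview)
Your proposal is correct and follows essentially the same route as the paper: apply the Gomez--Zhu isomorphism to reduce to Whittaker functionals on $\Theta_\psi(\pi)$, then use Howe duality in both directions. The only cosmetic differences are that the paper cites \cite{Ba} explicitly for the non-vanishing $\theta_\psi(\sigma)\neq 0$ when $\sigma$ is generic (your ``standard first-occurrence results''), and phrases the converse direction by contradiction rather than by directly composing the quotient $\Theta_\psi(\pi)\twoheadrightarrow\sigma$ with a Whittaker functional; the logical content is identical.
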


\begin{proof}
We show the first statement; the second (the `dual statement') is similar. 

The main result of \cite{Zh} (cf. also \cite{GZ}), as stated in Proposition \ref{prop:MainGomezZhu}, implies that \[ W_{\gamma_{r,1},\triv,\psi}(\Theta_\psi(\pi)) \cong W_{\gamma_1,\triv_1,\psi}(\pi^\vee) \] for all $\pi\in \Irr(\O_{2k})$. (Here $\triv$ is the trivial representation for the trivial subgroup of $M_{\gamma_{r,1}}$. Henceforth we shall drop the $_{\triv}$ subscripts for ease of reading.)

On one hand, if $\pi$ occurs as a quotient of $W_{\gamma_{1}}$, then $W_{\gamma_1}(\pi^\vee) \ne 0$, hence $W_{\gamma_{r,1}}(\Theta(\pi))\ne 0$; in particular $\Theta(\pi)\ne 0$, hence $\theta(\pi)\ne 0$. In view of Theorem \ref{thm:HoweDuality}, $\pi$ is the (small) theta lift of an irreducible representation of $\Sp_{2k-2a}$. 

Note that if $\Theta(\pi)$ is itself already irreducible (i.e. equal to $\theta(\pi)$), which one expects to be true most of the time, then we have shown that $\pi$ is the (small) theta lift of an irreducible \textit{$\psi$-generic} representation of $\Sp_{2k-2a}$. 

On the other hand, let $\sigma$ be an irreducible $\psi$-generic (tempered) representation of $\Sp_{2k-2a}$, so $W_{\gamma_{r,1}}(\sigma)\ne 0$. As in the remarks after Theorem 4.1 of \cite{Ba}, since ($1\le a\le k-1$ and) $\sigma$ is generic, one has $\theta(\sigma)\ne 0$. We want to show that $W_{\gamma_1}(\theta(\sigma))\ne 0$. Suppose otherwise that $W_{\gamma_1}(\theta(\sigma))= 0$, then $W_{\gamma_{r,1}}(\Theta(\theta(\sigma)))= 0$; but this means that $\sigma$, as a quotient of $\Theta(\theta(\sigma))$ (cf. Theorem \ref{thm:HoweDuality}), is not generic, a contradiction. 
\end{proof}

Note that, by the (enhanced) Shahidi's conjecture \cite{HLL} for symplectic groups, any irreducible generic tempered representation lives in exactly one A-packet, and an A-packet is tempered if and only if it has a generic member. 

Since the theta-lift realises the respective functorial lifting via the maps $\O_{2k-2a+1}\times \SL_2\rightarrow \O_{2k}$ and $\O_{2a+1}\times \SL_2\rightarrow \O_{2k}$ (cf. Theorem \ref{thm:Adam}), we see that Theorem \ref{thm:EvenOrthogonalMain} realises the expected functorial lifting of Expectation \ref{ex:relLanglandsQuantizationSmooth}.

In other words, Theorem \ref{thm:EvenOrthogonalMain} shows Theorem$^\prime$ \ref{thm2:EvenOrthogonal} at the smooth local level.

\begin{rem}
From the proof, we see also that the multiplicity-one property should hold for $W_{\gamma_1}$ (and $W_{\gamma_2}$), that is, 
\[ \dim \Hom (W_{\gamma_1}, \pi) \le 1\]
for $\pi\in \Irr(\O_{2k})$, as long as $\Theta(\pi)$ is irreducible (i.e. equal to $\theta(\pi)$). 
\end{rem}

\begin{rem}
One has $1\le a\le k-1$. When $a=k-1$, the corresponding nilpotent orbit is trivial, and we recover (in principle) the case of the spherical variety $\O_{2k-1}\bs \O_{2k}$, which was studied in \cite{GaWa}. 
\end{rem}

\subsection{Odd orthogonal groups}

Suppose $n=2k+1$ is odd. Then we must have $a_1=2a$ even. In this case, the dual group $G^\vee$ is a symplectic group, so there are slight differences from the previous section. 

\begin{thm2}\label{thm2:OddOrthogonal}
The hyperspherical varieties $M_1,M_2$ defined respectively by 
\begin{itemize}
    \item the datum $\O_{2a}\times \SL_2\rightarrow \O_{2k+1}$, corresponding to the nilpotent orbit with partition $[n-2a,1^{2a}]$, and trivial $S$;\\
    
    \item and the datum $\Sp_{2k-2a+2}\times \SL_2\rightarrow \Sp_{2k}$, corresponding to the nilpotent orbit with partition $[2a-2, 1^{2k-2a+2}]$, and $S$ the standard symplectic representation of $\Sp_{2k-2a+2}$,
\end{itemize}
are dual under the expected \cite{BZSV}-duality of hyperspherical varieties.

\end{thm2}

Recalling the setup of Section \ref{sec:MomentMap}, let $\gamma_1$ be the nilpotent orbit of $\mf{so}_{2k+1}$ corresponding under the moment map to a regular nilpotent orbit $\gamma_{r,1}$ of $\mf{sp}_{2k-2a+2}$; we know that it corresponds to the partition $[2k-2a+1,1^{2a}]$. 

\begin{figure}[H]
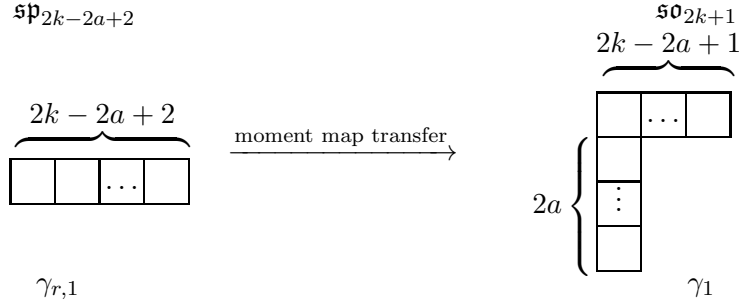

    \begin{center}
$\mf{sp}_{2k-2a+2} \quad\quad\quad\quad\quad\quad\quad\quad\quad\quad\quad\quad\quad\quad\quad\quad\quad\quad \mf{so}_{2k+1}$
    
   \ytableausetup {mathmode,boxframe=normal,boxsize=1.5em,centertableaux} 
   \begin{tabular}{r@{}l}
   $\overbrace{\hspace{6em}}^{\displaystyle 2k-2a+2}$\\
   \begin{ytableau} \quad & \quad &\dots &\quad  \end{ytableau}
   \end{tabular} $\quad\xrightarrow{\text{moment map transfer}}\quad$
\ytableausetup {mathmode,boxframe=normal,boxsize=1.5em} 
\begin{tabular}{r@{}l}
& $\overbrace{\hspace{4.5em}}^{\displaystyle 2k-2a+1}$\\
\begin{tabular}{r@{}l} \vspace{0.5em} \\
 $2a\left\{\vphantom{\begin{ytableau}
    \quad \\ \quad  \\ \quad
 \end{ytableau}}\right.$ \end{tabular} & \begin{ytableau} \quad&\dots &\quad \\ \quad \\ \vdots\\ \quad  \end{ytableau} 
\end{tabular}

$\gamma_{r,1} \quad\quad\quad\quad\quad\quad\quad\quad\quad\quad\quad\quad\quad\quad\quad\quad\quad\quad\quad\quad\quad \gamma_1$

\end{center}
\vspace{1em}
\caption{The Young tableaux corresponding to $\gamma_{r,1}$ and $\gamma_1$ respectively.  }
\end{figure}

Similarly, let $\gamma_2$ be the nilpotent orbit of $\mf{sp}_{2k}$ corresponding under the moment map to a regular nilpotent orbit $\gamma_{r,2}$ of $\mf{so}_{2a}$; we know that it corresponds to the partition $[2a-2, 1^{2k-2a+2}]$. 

Note that, in this case, there is some ambiguity in the choice of regular nilpotent orbit $\gamma_{r,2}$ of $\mf{so}_{2a}$ (since the corresponding partition is $[2a-1,1]$ and not $[2a]$). We fix a choice with the discriminant of the (orthogonal) form $B_1$ on the multiplicity space $V_1$ being trivial in $F^\times / F^{\times 2}$. 

\begin{figure}[H]
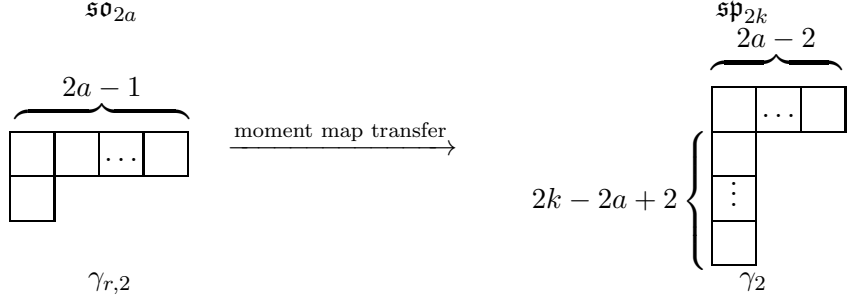

    \begin{center}
$\mf{so}_{2a} \quad\quad\quad\quad\quad\quad\quad\quad\quad\quad\quad\quad\quad\quad\quad\quad\quad\quad\quad\quad \mf{sp}_{2k}$
    
   \ytableausetup {mathmode,boxframe=normal,boxsize=1.5em,centertableaux} 
   \begin{tabular}{r@{}l}
   $\overbrace{\hspace{6em}}^{\displaystyle 2a-1}$\\
   \begin{ytableau} \quad & \quad &\dots &\quad \\ \quad  \end{ytableau}
   \end{tabular} $\quad\xrightarrow{\text{moment map transfer}}\quad$
\ytableausetup {mathmode,boxframe=normal,boxsize=1.5em} 
\begin{tabular}{r@{}l}
& $\overbrace{\hspace{4.5em}}^{\displaystyle 2a-2}$\\
\begin{tabular}{r@{}l} \vspace{0.5em} \\
 $2k-2a+2\left\{\vphantom{\begin{ytableau}
    \quad \\ \quad  \\ \quad
 \end{ytableau}}\right.$ \end{tabular} & \begin{ytableau} \quad&\dots &\quad \\ \quad \\ \vdots\\ \quad  \end{ytableau} 
\end{tabular}

$\gamma_{r,2} \quad\quad\quad\quad\quad\quad\quad\quad\quad\quad\quad\quad\quad\quad\quad\quad\quad\quad\quad\quad\quad \gamma_2$

\end{center}
\vspace{1em}
\caption{The Young tableaux corresponding to $\gamma_{r,2}$ and $\gamma_2$ respectively.  }
\end{figure}

The centraliser of $\gamma_2$, $M_{\gamma_2}$, is isomorphic to the direct product of $\mu_2$ and $H$, the isometry group of a $(2k-2a+2)$-dimensional symplectic vector space over $F$. As in subsection \ref{sec:FourierJacobiInduction}, let $\rho$ be the dual of the Weil representation (associated to $\psi$) of the metaplectic cover of $H$, which is also isomorphic to the Weil representation associated to $\bar{\psi}(x):=\psi(-x)$.  

Furthermore, as we have seen in Section \ref{sec:RelativeLanglandsDuality}, $M_1$ and $M_2$ have quantizations which are respectively the generalised Whittaker representations $W_{\gamma_1,\triv_1,\psi}$ and $W_{\gamma_2,\rho,\psi}$. If working with hyperspherical datum over $F$ (cf. Remark \ref{rem:Rationality}), we negate the symplectic form on $S$, so that its quantization is the Weil representation associated to $\bar{\psi}$, which is $\rho$.

\begin{thm}\label{thm:OddOrthogonalMain}
We have:
\begin{itemize}
\item If $\pi$ is an irreducible representation of $\O_{2k+1}$ which occurs as a quotient of $W_{\gamma_1,\triv_1,\psi}$, then $\pi=\theta_\psi(\sigma)$ for $\sigma$ an irreducible representation of ${\rm Mp}_{2k-2a+2}$.

Conversely, if $\sigma$ is an irreducible $\psi$-generic representation of ${\rm Mp}_{2k-2a+2}$, then if $\pi:=\theta_\psi(\sigma) \ne 0$, it is an irreducible representation of $\O_{2k+1}$ which occurs as a quotient of $W_{\gamma_1,\triv_1,\psi}$. \\

\item If $\pi$ is an irreducible representation of $\Sp_{2k}$ which occurs as a quotient of $W_{\gamma_2,\rho,\psi}$, then $\pi=\theta_\psi(\sigma)$ for $\sigma$ an irreducible representation of $\O_{2a}$.

Conversely, if $\sigma$ is an irreducible $\psi$-generic\footnote{one should think of the dependence not as being on $\psi$, but rather on the choice of regular nilpotent orbit $\gamma_{r,2}$ of $\mf{so}_{2a}$ (as above), as is explained in \cite[Section 12]{GGP}} representation of $\O_{2a}$, then $\pi:=\theta_\psi(\sigma)$ is an irreducible representation of $\Sp_{2k}$ which occurs as a quotient of $W_{\gamma_2,\rho,\psi}$. 

\end{itemize}
\end{thm}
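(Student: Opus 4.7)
The plan is to prove both statements by adapting the argument of \thmref{thm:EvenOrthogonalMain} essentially verbatim: the key move in each case is to specialise the Gomez-Zhu identity of \propref{prop:MainGomezZhu} to the relevant reductive dual pair, and then to use Howe duality (\thmref{thm:HoweDuality}) together with Adams' conjecture (\thmref{thm:Adam}), or its known analogue for generic tempered representations, to produce the desired functorial lifting from, or to, the expected smaller group.

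For the first statement, I would work with the dual pair $(\O_{2k+1},{\rm Mp}_{2k-2a+2})$ and the moment-map transfer $\gamma_{r,1}\leftrightarrow \gamma_1$ of \propref{prop:TransferNilpOrbit}. Since $\gamma_{r,1}$ is regular, its Young diagram $[2k-2a+2]$ has no rows of length one, so the centraliser factor $G(V_1,B_1)$ of $M_{\gamma_{r,1}}$ is trivial; hence $\tau$ must be the trivial character and $\Theta(\tau)^\vee = \triv_1$. \propref{prop:MainGomezZhu} then simplifies to
\[ W_{\gamma_{r,1},\triv,\psi}(\Theta_\psi(\pi))\cong W_{\gamma_1,\triv_1,\psi}(\pi^\vee),\qquad \pi\in \Irr(\O_{2k+1}), \]
and the proof of \thmref{thm:EvenOrthogonalMain} transcribes almost line by line. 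One has only to accept that Howe duality and the requisite non-vanishing result for generic tempered theta lifts extend to the pair $(\O_{2k+1},{\rm Mp}_{2k-2a+2})$, which is standard in this setting.

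For the second statement, I would use the dual pair $(\O_{2a},\Sp_{2k})$ and the transfer $\gamma_{r,2}\leftrightarrow \gamma_2$, where $\gamma_{r,2}$ is the regular orbit of $\mf{so}_{2a}$ of partition $[2a-1,1]$ with the trivial discriminant on the multiplicity space $V_1$ stipulated in the theorem. Here the centraliser factor $G(V_1,B_1) = \O_1 \cong \mu_2$ is non-trivial, and $G'(V_{new}) = G'(V'_1,B'_1) = \Sp_{2k-2a+2}$. A direct computation of the theta lift for the auxiliary dual pair $(\O_1,\Sp_{2k-2a+2})$ identifies $\Theta(\triv)$ with the even Weil representation $\omega_\psi^{ev}$ of ${\rm Mp}_{2k-2a+2}$, so that $\Theta(\triv)^\vee = (\omega_\psi^{ev})^\vee$ is a direct summand of $\rho = \omega_\psi^\vee$. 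Summing the Gomez-Zhu identities for $\tau = \triv$ and $\tau = \mathrm{sgn}$ then yields
\[ W_{\gamma_2,\rho,\psi}(\pi^\vee)\cong W_{\gamma_{r,2},\triv,\psi}(\Theta_\psi(\pi))\oplus W_{\gamma_{r,2},\mathrm{sgn},\psi}(\Theta_\psi(\pi)),\qquad \pi\in \Irr(\Sp_{2k}). \]
The first direction then follows because non-vanishing of the left side forces $\Theta_\psi(\pi)\neq 0$, whence $\pi = \theta_\psi(\sigma)$ for $\sigma = \theta_\psi(\pi)\in \Irr(\O_{2a})$ by Howe duality. For the converse, the notion of $\psi$-generic for $\sigma$ (given the choice of $\gamma_{r,2}$ specified) translates to $W_{\gamma_{r,2},\triv,\psi}(\sigma)\neq 0$, and the non-vanishing of $\theta_\psi(\sigma)$ is standard for generic tempered $\sigma$; running the contradiction argument of \thmref{thm:EvenOrthogonalMain}, realising $\sigma^\vee \cong \theta_\psi(\theta_\psi(\sigma)^\vee)$ as a quotient of $\Theta_\psi(\theta_\psi(\sigma)^\vee) = \Theta_\psi(\pi^\vee)$ via MVW and invoking left-exactness of $\Hom(W_{\gamma_{r,2},\triv},-)$ then yields $W_{\gamma_2,(\omega_\psi^{ev})^\vee,\psi}(\pi^\vee)\neq 0$ and hence $W_{\gamma_2,\rho,\psi}(\pi^\vee)\neq 0$.

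The main point requiring care is the computation of the big theta lift $\Theta(\triv)$ for the auxiliary pair $(\O_1,\Sp_{2k-2a+2})$ in part two, together with the verification that the Whittaker model produced on the $\Sp_{2k}$-side by Gomez-Zhu really coincides with a direct summand of the $W_{\gamma_2,\rho,\psi}$ appearing in the theorem statement. A secondary technical point is the MVW-style identification $\theta_\psi(\sigma^\vee)\cong \theta_\psi(\sigma)^\vee$ for the dual pairs involved, which is standard in the $(\O,\Sp)$ setting but essential to make the contradiction argument in the converse direction airtight.
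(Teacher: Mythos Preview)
Your proposal is correct and follows essentially the same route as the paper, whose proof consists of the single sentence ``Since the proof will be largely the same as that of \thmref{thm:EvenOrthogonalMain}, we do not reproduce it here.'' You have in fact been more careful than the paper by spelling out the one genuinely new feature of the second bullet: because the regular orbit $\gamma_{r,2}$ of $\mf{so}_{2a}$ has partition $[2a-1,1]$ rather than a single row, the centraliser factor $G(V_1,B_1)\cong\O_1$ is non-trivial, so one must sum the Gomez--Zhu identity over both characters of $\O_1$ to assemble the full $\rho=\omega_\psi^\vee=(\omega_\psi^{ev})^\vee\oplus(\omega_\psi^{odd})^\vee$ on the $\Sp_{2k}$-side. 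One small imprecision worth noting: $\psi$-genericity of $\sigma\in\Irr(\O_{2a})$ relative to the chosen $\gamma_{r,2}$ means non-vanishing of the ordinary Whittaker functional $\Hom_U(\sigma,\chi_{\gamma_{r,2}})$, which by induction in stages is the \emph{sum} $W_{\gamma_{r,2},\triv,\psi}(\sigma)\oplus W_{\gamma_{r,2},\mathrm{sgn},\psi}(\sigma)$ rather than the $\triv$-piece alone; this does not affect your argument, since non-vanishing of either summand already forces the corresponding summand of $W_{\gamma_2,\rho,\psi}(\pi^\vee)$ to be nonzero.
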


\begin{proof}
Since the proof will be largely the same as that of Theorem \ref{thm:EvenOrthogonalMain}, we do not reproduce it here. 
\end{proof}

Note here that, by the local Shimura correspondence \cite{GaSa}, irreducible representations of the metaplectic group are parameterised by L-parameters into the symplectic group. 

Now since the theta-lift is expected to realise the respective functorial lifting via the maps $\Sp_{2k-2a+2}\times \SL_2\rightarrow \Sp_{2k}$ and $\O_{2a}\times \SL_2\rightarrow \O_{2k+1}$ (cf. Theorem \ref{thm:Adam}), we see that Theorem \ref{thm:OddOrthogonalMain} should realise the expected functorial lifting of Expectation \ref{ex:relLanglandsQuantizationSmooth}. 

In other words, Theorem \ref{thm:OddOrthogonalMain} shows Theorem$^\prime$ \ref{thm2:OddOrthogonal} at the smooth local level. 

\begin{rem}
From the proof, we see also that the multiplicity-one property should hold for $W_{\gamma_1}$ (and $W_{\gamma_2,\rho}$), that is, 
\[ \dim \Hom (W_{\gamma_1}, \pi) \le 1\]
for $\pi\in \Irr(\O_{2k+1})$, as long as $\Theta(\pi)$ is irreducible (i.e. equal to $\theta(\pi)$). 
\end{rem}

\begin{rem}
One has $2\le a\le k$. When $a=k$, the corresponding nilpotent orbit is trivial, and we recover (in principle) the case of the spherical variety $\O_{2k}\bs \O_{2k+1}$, which was studied in \cite{GaWa}. 
\end{rem}

\subsection{Closing remarks} We close this section with several remarks. 

\begin{rem}\label{rem:SymplecticHookType}
It is of note that we do also exhaust all the possible hook-type partitions for the group $\Sp_{2n}$, modulo the occurrence of the Weil representation, or the non-trivial $S$. 

When $S$ is trivial, this corresponds to trivial $\rho$, so that (cf. the discussion of Section \ref{notevengamma}) we should be working with representations of the metaplectic cover ${\rm Mp}_{2n}$ in its quantization. In the language of \cite{BZSV}, the corresponding $M$ has anomaly, cf. Remark \ref{rem:Anomaly}, and is currently excluded from the expectations of duality. 

The choice of $S$, or of $\rho$ as above, corresponds to the canonical choice explained in subsection \ref{sec:FourierJacobiInduction}. In fact $S$ is the dual of the symplectic space $(\mf{u}/\mf{u}^+)$, and the resulting Hamiltonian variety $M_2$ is a (twisted) cotangent bundle over $HU^+\bs G$, whose quantization is an induction from a character, as in (\ref{defFJW}). 

So we obtain a fairly complete picture of the duality in the case of hook-type partitions for the orthogonal and symplectic groups. 
\end{rem}

\begin{rem}
The main result of Gomez and Zhu \cite{Zh}, \cite{GZ} used in the proof has natural global analogues, special cases of which have been studied in for example \cite{GRS}. It is possible that this will play a role in the study of the analogous global problem. 
\end{rem}

\begin{rem}
As alluded to in \S \ref{sec:IntroResult}(b), our results on the hook-type partitions in this section give rise to a correspondence \[ e \leftrightarrow e^\vee \] between nilpotent orbits of $G$ and $G^\vee$, which resembles a duality theory of nilpotent orbits. It is very natural to ask if this correspondence coincides with, for instance, the Barbasch-Vogan duality of nilpotent orbits \cite{BV}. 

Unfortunately, this does not appear to be the case. For instance, we have computed that the nilpotent orbit with partition $[3,1,1,1,1]$ in type $B_3$ corresponds under Barbasch-Vogan duality to the nilpotent orbit with partition $[4,2]$ in type $C_3$, which is not of hook-type. 

\end{rem}

\begin{rem}\label{rem:NoIsotypic2}
As mentioned in Remarks \ref{rem:NoIsotypic} and \ref{rem:NoIsotypic3}, one may also consider the generalised Whittaker models as representations of $G\times \tilde{M_\gamma}$, especially when dealing with the Bessel and Fourier-Jacobi models (i.e. hook-type partitions). 

However, the situation in this case (where we are dealing with the Bessel and Fourier-Jacobi models) as regards to hyperspherical duality is in fact somewhat `simpler' than the situation we deal with, as the spherical varieties thus obtained are \textit{strongly tempered}, in the sense that its dual datum satisfies $X^\vee = G^\vee\times \tilde{M_\gamma}^\vee$ \cite{WZ}. 

The dual in this case is essentially known \cite{FU} to be the standard symplectic vector space (acted upon by the corresponding dual groups $G^\vee$ and $\tilde{M_\gamma}^\vee$ which form a reductive dual pair), quantizing to the Weil representation (or theta correspondence / Howe duality, as studied in \cite{Sa2}). The dual problem concerning the spectral decomposition of the Weil representation is essentially precisely Adams' conjecture (Theorem \ref{thm:Adam})!

We note that, for instance, corresponding problems related to the Bessel models have been studied by Liu \cite{Liu}, and for the Fourier-Jacobi models by Xue \cite{Xu}. 
\end{rem}

\section{Duality under symplectic reduction}\label{sec:DualitySymplecticReduction}

Following a suggestion of Venkatesh, let us now examine how duality in the $G\times M_\gamma$ case (Remark \ref{rem:NoIsotypic2}, \cite{FU}) may be seen to be related to duality in the $G$ case. In doing so, we will also see how hyperspherical duality is expected to relate to the operation of symplectic reduction, via the following guiding principle: \[ \textit{Hyperspherical duality `commutes' with symplectic reduction.} \]

We will now illustrate this using the case where $G$ is the even orthogonal group, that is, the case in Section \ref{sec:EvenOrthogonal}. Retain all notation of Section \ref{sec:EvenOrthogonal}. 

From (\ref{defWhittakerInduction2}), one easily sees that the hyperspherical variety $M_1$ is obtained by the symplectic reduction (Definition \ref{def:SymplecticReduction}), with respect to $H_1=\O_{2a+1}$, of the corresponding hyperspherical variety in the $G\times M_\gamma$ case, that is, the hyperspherical variety with datum \[  \O_{2a+1}^\Delta \times \SL_2 \rightarrow \O_{2k}\times \O_{2a+1} \quad \text{and} \quad S = 0. \] Denote this hyperspherical variety by $M_1'$. 

We may thus also view $M_1$ as the symplectic reduction of $M_1'$ with the trivial space $\{0\}$ for $\O_{2a+1}$. We have seen above that the dual of $M_1'$ is just the symplectic vector space $M_2' = \C^{2k}\otimes \C^{2a}$ given by the tensor product of the defining representations of the dual group $\O_{2k}\times \Sp_{2a}$. 

Now, since the dual of the trivial $\O_{2a+1}$-space $\{0\}$ is the Whittaker cotangent bundle $T^\ast (N,\psi \bs \Sp_{2a})$ for $N$ a maximal unipotent subgroup of $\Sp_{2a}$, one might expect, following a suggestion of Venkatesh, that the dual of $M_1$ is obtained by the symplectic reduction of $M_2'$ with the Whittaker cotangent bundle $T^\ast (N,\psi \bs \Sp_{2a})$ for $\Sp_{2a}$ (see Definition \ref{def:SymplecticReduction} and the remarks after it): \[ (M_2' \times_{\mf{sp}_{2a}^\ast} T^\ast (N,\psi \bs \Sp_{2a}))/\Sp_{2a}. \]

There are several ways to view such a reduction with the Whittaker cotangent bundle, which we will also call a `Whittaker reduction'. On one hand, writing $T^\ast (N,\psi \bs \Sp_{2a})$ in the usual form $(\lambda+\mf n^\perp)\times^N \Sp_{2a}$ (as in Example \ref{ex:TwistedCotangent}), one may easily view this reduction of $M_2'$ as a `reduction with respect to $(N,\psi)$'. (In fact, viewing the Whittaker cotangent bundle as a symplectic induction from $N$ to $\Sp_{2a}$, this is an instance of the symplectic analogue of Frobenius reciprocity, as in Remark \ref{rem:FrobeniusReciprocitySymplectic} and \cite[Theorem 3.4]{RZ}.) This form is most familiar to us from the representation-theoretic viewpoint, where it corresponds to the formation of $(N,\psi)$-coinvariants. 

On the other hand, one may again use (\ref{defWhittakerInduction2}) but for the Whittaker cotangent bundle, to write  \begin{equation}
\label{defWhittakerInduction3} 
T^\ast (N,\psi \bs \Sp_{2a}) \cong (f_{r,2} + \mf{sp}_{2a}^{e_{r,2}}) \times \Sp_{2a},
\end{equation} where $(f_{r,2} + \mf{sp}_{2a}^{e_{r,2}})$ is the principal Slodowy slice for the regular nilpotent orbit $\gamma_{r,2}$ of $\mf{sp}_{2a}$ (recall that we are continuing the notation of Section \ref{sec:EvenOrthogonal}). 

Now one sees readily that the desired reduction of $M_2'$ is then nothing but the pre-image, under the moment map $M_2'\rightarrow \mf{sp}_{2a}^\ast$, of the principal Slodowy slice $(f_{r,2} + \mf{sp}_{2a}^{e_{r,2}})$ of $\mf{sp}_{2a}$. See \cite[Proposition 3.9]{CR} for a detailed proof, where such a reduction is also called a \textit{Poisson slice} of $M_2'$ (with respect to $\Sp_{2a}$). 

Therefore one expects that the dual $M_2$, which is the cotangent bundle $T^\ast(H_2 U_2\bs \O_{2k})$ obtained by the Whittaker induction from the hook-type nilpotent orbit $\gamma_2$ of $\O_{2k}$, is precisely obtained by taking such a Poisson slice of the symplectic vector space $M_2'$.

We summarise the above discussion via the following diagram:

\[ \begin{tikzcd}[ampersand replacement=\&, row sep=large,column sep=huge, every label/.append
style={font=\normalsize}, outer sep=5pt]
M_1' \arrow{d}[swap]{\begin{array}{@{}c@{}}\text{symplectic reduction} \\ \text{with } \{0\} \end{array}}\arrow[r,leftrightarrow,"\text{duality}"] \& M_2' \arrow[d," \begin{array}{@{}c@{}}\text{Whittaker reduction,} \\ \text{or taking Poisson slice}\end{array}"] \\
M_1 \arrow[r,leftrightarrow,"\text{duality} "] \& M_2\\
\end{tikzcd} \]

\begin{prop}\label{prop:Slice}
Retain the notation of the preceding discussion. Assume that the Poisson slice, or Whittaker reduction, of $M_2'$ is hyperspherical. Then this Poisson slice is isomorphic to the dual $M_2$.
\end{prop}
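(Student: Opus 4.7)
The plan is to rewrite the Whittaker reduction of $M_2'$ explicitly in the form \eqref{defWhittakerInduction2} of a Whittaker-induced variety, with data $(\O_{2k-2a+1}\times \SL_2 \rightarrow \O_{2k},\ S = 0)$ — precisely the data defining $M_2$. Since both sides are hyperspherical $\O_{2k}$-varieties (by hypothesis for the Poisson slice, and by construction for $M_2$), matching the Whittaker-induction datum will yield the desired isomorphism via the structure theorem (Theorem \ref{MainStructureThm}) together with a dimension count.

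First, using \eqref{defWhittakerInduction3} and the standard Slodowy-slice manipulation, I would identify the Whittaker reduction with the Poisson slice
\[
\phi'^{-1}(f_{r,2} + \mf{sp}_{2a}^{e_{r,2}}) \subset M_2' = \Hom(V', V),
\]
where $\phi'(f) = f^\ast f$ is the moment map for the $\Sp_{2a}$-action. Next, using the linear-algebra fact that $f f^\ast$ and $f^\ast f$ have matching nonzero Jordan structure (any difference being supported on $\ker f$ and $\ker f^\ast$), I would show that the $\mf{so}_{2k}$-moment map image $\phi(f) = f f^\ast$ of a point $f$ in the Poisson slice lies in the Slodowy slice $f_2 + \mf{so}_{2k}^{e_2}$ on the $\mf{so}_{2k}$ side. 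This is the Slodowy-slice refinement of the moment-map transfer of nilpotent orbits from Proposition \ref{prop:TransferNilpOrbit}. The complement $V_{new} := \ker f \oplus (f(V'))^\perp \subset V$ of total dimension $2k-2a+1$ inherits a nondegenerate orthogonal form, and the residual symmetry $H_2 = \O(V_{new}) \cong \O_{2k-2a+1}$ acts on the Poisson slice. The $\mfsl_2$-action on $V'$ induced by $\gamma_{r,2}$ transfers, via $f$, to an $\mfsl_2$-action on $V$ corresponding to $\gamma_2$. Combining these constructions yields a natural $\O_{2k}$-equivariant morphism from the Poisson slice to $((f_2 + \mf{so}_{2k}^{e_2}) \cap \mf h_2^\perp) \times^{H_2} \O_{2k} \cong M_2$, which is symplectic by the general theory of Hamiltonian reduction.

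Finally, to conclude that this morphism is an isomorphism: surjectivity follows by constructing an explicit section, reconstructing $f$ (up to the $H_2$-action) from a Slodowy-slice point and a choice of orthogonal frame on $V_{new}$, using the $\mfsl_2$-intertwining property of $f$. A dimension count confirms that both sides have the same dimension. Alternatively, once the morphism is established and both varieties are known to be hyperspherical with matching Whittaker-induction data, one can invoke the uniqueness in Theorem \ref{MainStructureThm} directly.

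The main obstacle is extending the moment-map transfer of nilpotent orbits of Proposition \ref{prop:TransferNilpOrbit}, which concerns individual conjugacy classes, to a statement about entire Slodowy slices. Concretely, one must verify that a deformation of $f^\ast f$ within $f_{r,2} + \mf{sp}_{2a}^{e_{r,2}}$ produces a corresponding deformation of $f f^\ast$ lying precisely in $f_2 + \mf{so}_{2k}^{e_2}$ (rather than merely in a neighbourhood of the orbit of $e_2$), and that the resulting assignment is essentially bijective modulo the $H_2$-action. This is most cleanly handled by working in a Jacobson–Morozov-compatible basis on $V'$ and $V$ adapted to the $\mfsl_2$-weight decompositions, writing $f$ as a raising operator between weight spaces, and solving the Slodowy-slice constraints in explicit coordinates. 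Once this local-coordinate computation is in place, the remaining steps are essentially bookkeeping.
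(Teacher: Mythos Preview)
Your alternative approach—matching the Whittaker-induction data and invoking the structure theorem—is precisely what the paper does, and is considerably cleaner than your main plan. The paper does not attempt to show that $f\mapsto ff^\ast$ carries the entire Poisson slice into a fixed Slodowy slice $f_2+\mf{so}_{2k}^{e_2}$ on the $\mf{so}_{2k}$ side (your ``main obstacle''). Instead it exploits the hypersphericality hypothesis more directly: by Theorem~\ref{MainStructureThm}, the Poisson slice is determined by the data at a single point in its unique closed $\O_{2k}\times\G_m$-orbit. The paper takes for this basepoint the specific element $f\in M_2'$ furnished by the moment-map transfer of Proposition~\ref{prop:TransferNilpOrbit}, checks that it lies in the closed orbit (namely the fibre over $f_{r,2}$), that its stabiliser is $H_2=\O_{2k-2a+1}$ (since $\ker f=V_{new}$ is $(2k-2a+1)$-dimensional), and that its $\O_{2k}$-moment-map image is $f_2$. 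A dimension comparison ($4ka-2a^2$ on both sides, citing \cite{CR}) then forces $S=0$.

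Your main approach would require real additional work, and the obstacle you flag is genuine: there is no a priori reason the image of the Poisson slice under $f\mapsto ff^\ast$ lands in a \emph{fixed} transversal slice on the $\mf{so}_{2k}$ side rather than merely in a union of $U_2$-translates of it, so the ``explicit coordinates'' step is not just bookkeeping. The paper's one-point argument sidesteps this entirely. A minor point: your definition $V_{new}=\ker f\oplus (f(V'))^\perp\subset V$ is inconsistent with your own conventions—if $\phi'(f)=f^\ast f$ lands in $\mf{sp}_{2a}$ then $V'$ is the symplectic space, so $\ker f\subset V'$ while $(f(V'))^\perp\subset V$, and the two summands do not live in a common space. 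In the paper's conventions (with $f$ going from the larger orthogonal space to the smaller symplectic one) one has simply $V_{new}=\ker f$.
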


\begin{proof}
First, one may use \cite[Corollary 3.4]{CR} to verify that the desired dimensions coincide: both have dimension $4ka-2a^2$. 

Recall that the nilpotent orbit $\gamma_2$ is precisely obtained by a transfer, via moment maps (Section \ref{sec:MomentMap}), from the regular nilpotent orbit $\gamma_{r,2}$ (cf. Figure \ref{fig:EvenOrthogonalM2Transfer}). Now, note that the moment maps in Section \ref{sec:MomentMap} are precisely the moment maps for the symplectic vector space $M_2'$ (after choosing some invariant identifications)! 

Therefore, retaining the notation of Section \ref{sec:MomentMap}, by definition of the transfer, one has an element $f\in M_2'$, which maps under the moment maps to the nilpotent elements $f_2$ of $\mf{so}_{2k}^\ast$ (corresponding to the nilpotent orbit $\gamma_2$), and $f_{r,2}$ of $\mf{sp}_{2a}^\ast$ (corresponding to $\gamma_{r,2}$) respectively. 

In particular, since its moment map image is $f_{r,2}$, $f$ lives in the Poisson slice of $M_2'$. Now since the moment map to $\mf{sp}_{2a}^\ast$ is a quotient map for $\O_{2k}$, it is also readily verified that $f$ is in the (unique) closed $\O_{2k}\times \mathbb{G}_{m}$-orbit of the Poisson slice, which is the pre-image of $f_{r,2}$ in $M_2'$. Here the $\mathbb{G}_{m}$-action is the grading (cf. Remark \ref{rem:Grading}) on the Poisson slice, which comes from the grading on $M_2'$ and the Whittaker cotangent bundle. Under the identification of (\ref{defWhittakerInduction3}), $\mathbb{G}_{m}$ acts on $\mf{sp}_{2a}^{e_{r,2}}$ so as to leave $0\in \mf{sp}_{2a}^{e_{r,2}}$ fixed. 

Furthermore, by the definition of $f$ as a linear map between vector spaces satisfying the conditions of Proposition \ref{prop:TransferNilpOrbit}, one has $\ker f = V_{new}$ is $(2k-2a+1)$-dimensional (and carries a non-degenerate orthogonal form). Now it is easy to verify that the stabiliser of $f$ is precisely \[ H_{2} = \O_{2k-2a+1}, \]  (cf. also \cite[Lemma 3.4]{Zh}). 

Now, since the moment map image of $f$ is $f_2 \in \mf{so}_{2k}^\ast$, one may employ the structure theorem of \cite{BZSV}, to deduce that the Poisson slice is indeed given by Whittaker induction along the datum \[ \O_{2k-2a+1}\times \SL_2 \rightarrow \O_{2k} \] corresponding to the nilpotent orbit $\gamma_2$, with the element $f$ as basepoint. The fact that the inducing symplectic vector space $S$ is trivial follows from the dimension comparison at the beginning.

\end{proof}

\begin{rem}
    This isomorphism is the geometric analog of the result of Gomez-Zhu (by the above interpretation of taking $(N,\psi)$-coinvariants). By the above proof, we also obtain a very conceptual geometric explanation of the involvement of the moment map transfer of nilpotent orbits, in the result of Gomez and Zhu. 
\end{rem}


Now, since $M_2$ is also a hook-type hyperspherical variety, one may repeat the above discussion with the roles of $M_1$ and $M_2$ reversed. In all, one has the following summarising diagram:

\[ \begin{tikzcd}[ampersand replacement=\&, row sep=huge,column sep=large, outer sep=5pt]
\O_{2k}\times \O_{2a+1} \circlearrowright M_1' \arrow{d}[swap]{\begin{array}{@{}c@{}}\text{symplectic reduction} \\ \text{with } \{0\} \end{array}}\arrow[r,leftrightarrow,"\text{duality}"] \&  M_2' = \C^{2k}\otimes \C^{2a} \circlearrowleft \O_{2k}\times \Sp_{2a}  \arrow[d," \begin{array}{@{}c@{}}\text{Whittaker reduction,} \\ \text{or taking Poisson slice}\end{array}"] \\
\O_{2k} \circlearrowright M_1 \arrow[r,leftrightarrow,"\text{duality} "] \& M_2 \circlearrowleft \O_{2k} \\ 
\O_{2k} \times \Sp_{2k-2a} \circlearrowright M_1'' = \C^{2k}\otimes \C^{2k-2a}  \arrow[u," \begin{array}{@{}c@{}}\text{Whittaker reduction,} \\ \text{or taking Poisson slice}\end{array}"]\arrow[r,leftrightarrow,"\text{duality} "] \& M_2'' \circlearrowleft \O_{2k} \times \O_{2k-2a+1} \arrow{u}[swap]{\begin{array}{@{}c@{}}\text{symplectic reduction} \\ \text{with } \{0\} \end{array}} \\
\end{tikzcd} \]

\section{Exceptional partitions}\label{sec:ExceptionalPartitions}

In this section, we examine  the `exceptional' partitions that feature in Theorems \ref{thm:OrthogonalClassification} and \ref{thm:SymplecticClassification}. In particular, we determine in many cases their expected hyperspherical duals and explain how the local Expectation \ref{ex:relLanglandsQuantizationSmooth}, as well as that for the `dual' problem (Remark \ref{rem:DualExpectation}), may be seen to be satisfied using (exceptional) theta correspondences. We also summarize some further expectations for hyperspherical duality at the end of the section; see  Expectation \ref{ex:Exceptional}.

In the following, we will take the liberty of working with different isogenous types of the groups involved, or with similitude groups. Note here one advantage of the proofs of Theorems \ref{thm:OrthogonalClassification} and \ref{thm:SymplecticClassification} (and the classification of spherical varieties in \cite{KVS}) is that they go through the commutator subalgebras of the Lie algebras (so that the proof carries over to isogenous or similitude groups without too much difficulty). For simplicity, we will thus not make much essential distinction between isogenous types or similitude groups. 
\vskip 5pt

\subsection{Orthogonal groups} Recall from Theorem \ref{thm:OrthogonalClassification} that the exceptional partitions which occur for the orthogonal group $\O_n$ are 
\[ [3,3], [4,4], [6,6].\]
We shall consider each of these in turn.
\vskip 5pt

\subsubsection{\bf $[3,3]$ partition}  Because $A_3 = D_3$,  we are essentially considering the general linear group $\GL_4$ in this case. Working in the context of $\GL_4$,  we are looking at the nilpotent orbit with hook-type partition $[3,1]$. In fact we may replace $\GL_4$ with any $\GL_n$ ($n\ge 2$), and a nilpotent orbit with partition $[n-1,1]$.

The hyperspherical dual is then expected to be (the cotangent bundle of) the standard representation of $\GL_n$, by the standard Jacquet-Shalika theory of integral representations of the standard L-function (applied to $\GL_n\times \GL_1$, with the trivial character on $\GL_1$). 

More precisely, we have the following meta-theorem

\begin{thm2}\label{thm2:Orthogonal33}
Let 
\begin{itemize}
    \item $M_1$ be the hyperspherical variety associated with the datum $\GL_1\times \SL_2\rightarrow \GL_n$  (corresponding to the nilpotent orbit $\gamma$ of $\GL_n$ with partition $[n-1,1]$) and trivial $S$;\\
    
    \item $M_2$ be the hyperspherical variety associated with the datum $\GL_n\times \SL_2\rightarrow \GL_n$ (corresponding to the trivial nilpotent orbit) and $S=\mathrm{std}\oplus\mathrm{std}^\ast$, where $\mathrm{std}$ is the standard representation of $\GL_n$.
\end{itemize}
Then $M_1$ and $M_2$ are dual under the expected \cite{BZSV}-duality.
 \end{thm2}

Let us now explicate the precise meaning of the above meta-theorem.
As we have seen in Section \ref{sec:RelativeLanglandsDuality}, the quantization of $M_1$ is the generalised Whittaker representation $W_{\gamma,\psi}$. For $M_2$, its quantization is (cf. Example \ref{ex:QuantizationWeil}) the pullback (after fixing a splitting) of the Weil representation $\omega_\psi$ of $\Sp_{2n}$ to the Levi factor $\GL_n$ of its Siegel parabolic subgroup. Then the mathematical content of Theorem' \ref{thm2:Orthogonal33} is:
\vskip 5pt

\begin{thm}\label{thm:Orthogonal33}
\begin{itemize}
    \item The irreducible representations of $\GL_n$ which occurs as a quotient of $W_{\gamma,\psi}$, are precisely the generic representations of $\GL_n$.\\

\item The irreducible representations of $\GL_n$ (of Arthur type) which occurs as a quotient of $\omega_\psi$ have Arthur parameters factoring through the morphism $\GL_1\times \SL_2\rightarrow \GL_n$ defining $M_1$. 

\end{itemize}
 
\end{thm}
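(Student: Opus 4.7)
The plan is to handle the two parts separately via direct analysis.

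For the first statement, the plan is to establish a bijection between the standard Whittaker functionals and the $[n-1,1]$-generalised Whittaker functionals on $\pi$. The key observation is that, for the partition $[n-1,1]$ of $\GL_n$, the unipotent subgroup $U^+$ attached to $\gamma$ is contained in the standard maximal unipotent $N$ of $\GL_n$, and the character $\chi_\gamma$ agrees with the restriction of a generic Whittaker character $\psi_N$ of $N$. The Rankin--Selberg integral of Jacquet--Piatetski-Shapiro--Shalika, which unfolds $L(s,\pi)$ as an integral
\[ \int_{N_{n-1}\backslash P_{n-1,1}} W_\pi(p)\, |\det p|^{s-1/2}\, dp \]
of a Whittaker function over the mirabolic $P_{n-1,1}$, provides the passage in one direction, and a Fourier expansion (root-exchange) argument provides the other. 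Non-vanishing of the $[n-1,1]$-functional on $\pi$ is thus equivalent to non-vanishing of the Whittaker functional, i.e.\ to $\pi$ being generic.

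For the second statement, the plan is to directly analyse the irreducible quotients of $\omega_\psi|_{\GL_n}$ via the Schr\"odinger model. Realising $\omega_\psi \cong \mathscr{S}(F^n)$, with $\GL_n$ (the Siegel Levi) acting by $(g\cdot\phi)(x) = |\det g|^{1/2}\phi(xg)$ (up to a Weil factor, which we suppress), one has the short exact sequence
\[ 0 \to \mathscr{S}(F^n\setminus\{0\}) \to \mathscr{S}(F^n) \to \C_{|\det|^{1/2}} \to 0 \]
of $\GL_n$-modules. The quotient character $|\det|^{1/2}$ is not of Arthur type (its restriction to $W_F$ is unbounded), so the analysis reduces to the submodule $\mathscr{S}(F^n\setminus\{0\}) \cong \ind^{\GL_n}_{P_{n-1,1}}\one$, the compact induction from the mirabolic of the trivial character. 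A Bernstein--Zelevinsky analysis then identifies the irreducible quotients of Arthur type as parabolic inductions of the form $\chi \boxplus \one_{\GL_{n-1}}$ for unitary characters $\chi$ of $\GL_1$; such a representation has Arthur parameter $\chi \oplus (1\boxtimes W_{n-1})$, which evidently factors through the morphism $\GL_1\times\SL_2\rightarrow \GL_n$ defining $M_1$.

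The main obstacle will be the detailed Bernstein--Zelevinsky analysis in the second part: one needs to verify both that every irreducible quotient of Arthur type takes the expected form $\chi \boxplus \one_{\GL_{n-1}}$ and that the corresponding Arthur parameters factor through the hyperspherical datum as claimed. The first part, by contrast, is a direct translation of classical Rankin--Selberg unfolding into the language of generalised Whittaker models, and should cause no essential difficulty.
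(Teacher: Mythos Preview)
Your proposal is correct but takes a genuinely different route from the paper. The paper treats both parts uniformly via theta correspondence: it views $\GL_n$ as the quasi-split unitary group over $E = F \times F$ and invokes the unitary analogue of the Gomez--Zhu result (Proposition~\ref{prop:MainGomezZhu}) for the equal-rank dual pair $\GL_n \times \GL_n$ (whose theta lift is the identity map $\pi \mapsto \pi$) to reduce part one to the hook-type argument of Theorem~\ref{thm:EvenOrthogonalMain}; for part two it interprets the restriction of $\omega_\psi$ to $\GL_n$ as the theta correspondence for the unitary dual pair $\U_1 \times \U_n$ inside $\Sp_{2n}$ and appeals directly to Adams' conjecture (Theorem~\ref{thm:Adam}). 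You instead handle each part by direct classical analysis --- Rankin--Selberg unfolding and root exchange for part one, and an explicit Bernstein--Zelevinsky filtration of the Schr\"odinger model $\mathscr{S}(F^n)$ for part two. Your route is more elementary and self-contained, bypassing both Gomez--Zhu and Adams entirely; the paper's route has the virtue of conceptual uniformity with Section~\ref{sec:HookType}, making transparent that the $[3,3]$ case for $\O_6$ is not a genuine exception but simply another instance of the same theta-lift mechanism governing all hook-type partitions.
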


\begin{proof}
Let us in fact view $\GL_n(F)$ as the unitary group $\U(V)$ for a Hermitian space $V$ of dimension $n$ over $E=F\times F$. In the unitary group case, almost all the results of Section \ref{sec:ThetaCorrespondence} have corresponding analogues (though we did not state them for simplicity); for details, we point the reader to \cite{GZ} and \cite{Zh}. 

One may then, exactly as in Section \ref{sec:HookType}, show that the irreducible representations of $\GL_n$ which occur as quotients of the corresponding generalised Whittaker model, are precisely $\psi$-generic representations of $\GL_n$. Note that the theta-lift for the (unitary) dual pair $\GL_n\times \GL_n$ in this case is precisely the identity $\pi\mapsto \pi$, cf. the remarks in \cite[Section 3.1]{Ga2}.

For the second statement (the dual problem), one may view the situation as concerning the theta correspondence for the (unitary) dual pair $\U_1\times \U_n$ in $\Sp_{2n}$, since the theta correspondence in this case involves the restriction of the Weil representation $\omega_\psi$ of $\Sp_{2n}$ to $\U_1\times \U_n$, and $\U_n\cong \GL_n$ is the group we are interested in. (Recall that irreducible representations of $\U_1\cong \GL_1$ are 1-dimensional.) The dual problem is then essentially just Adams' conjecture (cf. Theorem \ref{thm:Adam}) for the dual pair $\U_1\times \U_n$!
\end{proof}

\subsubsection{\bf $[4,4]$ partition}\label{44partition} 
In this case, we shall take $G = \PGSO_8$ to be the adjoint group, so that $G^{\vee} = \Spin_8$ is simply connected.  The phenomenon  of triality has some interesting structural implications in this context, which we shall first explicate. 
\vskip 5pt
\begin{itemize}
\item[(a)]  First, there are 3 non-conjugate homomorphisms
\[  f_j:  \SO_8 \longrightarrow G= \PGSO_8 \quad \text{and likewise} \quad  p_j: G^{\vee}= \Spin_8 \longrightarrow \SO_8. \]
The $f_j$'s and $p_j$'s are permuted cyclically by a triality automorphism $\theta$ of $\PGSO_8$ and $\Spin_8$ respectively.  
\vskip 5pt

Now the description of nilpotent orbits in type $D_4$ by partitions of $8$ makes sense only if one is working with $\SO_8$ with its standard representation. In the present setting, this signifies that we have distinguished one of the 3 conjugacy classes of maps, say $f_1$ and $p_1$, as the standard one. Thus, $p_1: \Spin_8 \rightarrow \SO_8$ is considered the standard representation, whereas $p_2$ and $p_3$ are considered the half-spin representations of $\Spin_8$. 
 \vskip 5pt
 
\item[(b)]   If one denotes by  $\SO_7$ the stabilizer in $\SO_8$ of a unit vector in the standard representation, then set
 \[  \Spin_7^{[j]}  := p_j^{-1}(\SO_7) \subset \Spin_8. \]
This gives 3 distinct conjugacy classes of embeddings $\Spin_7 \rightarrow \Spin_8$ and 3 spherical varieties
\[  X_j := \Spin_7^{[j]} \bs \Spin_8. \]
Having fixed $p_1: \Spin_8 \rightarrow \SO_8$ as the standard representation of $\Spin_8$, 
the restriction of $p_2$ or $p_3$ to $\Spin_7^{[1]}$ is then the  irreducible spin representation of $\Spin_7$.  Moreover, one has
\[  \Spin_7^{[i]} \cap \Spin_7^{[j]} \cong G_2 \quad \text{if $i \ne j$}, \]
where $G_2$ is the exceptional group of rank $2$.
\vskip 5pt

Likewise, the restriction of $f_j$ to $\SO_7$ gives 3 conjugacy classes of embedding $\SO_7 \longrightarrow  \PGSO_8$ permuted by triality.
\vskip 5pt

\item[(c)]  If one starts with a  nilpotent orbit of type $[5,1,1,1]$ relative to $(f_1, p_1)$ and apply the triality automorphism to it, one obtains a nilpotent orbit of type $[4,4]$ relative to $(f_1, p_1)$ (cf. \cite[Example 5.3.7]{CM}). 
\end{itemize}

After the above preliminaries on triality, we can now formulate the following meta-theorem.

\begin{thm2}\label{thm2:Orthogonal44}
Let
\begin{itemize}
    \item $M_1$ be the hyperspherical variety associated to the datum corresponding to a nilpotent orbit  of $\PGSO_8$ associated to a partition $[4,4]$ (relative to $f_1$);\\
    
    \item $M_2$ be the the cotangent bundle of the spherical variety
\[  X_2 = \Spin^{[2]}_7 \bs \Spin_8.\]
\end{itemize}
Then $M_1$ and $M_2$ are dual under the expected \cite{BZSV}-duality. 
 
\end{thm2}

As before, the precise mathematical meaning of this meta-theorem involves resolving two  branching problems.
Indeed, we shall see that the desired  result can be deduced 
by an application of a triality automorphism $\theta$ to the results of Section \ref{sec:EvenOrthogonal} in the case $a=3$ and $k=4$,  i.e. for the duality between the nilpotent orbit of type $[5,1,1,1]$ and  $\SO_7\bs \SO_8 \cong \Spin^{[1]}_7\bs \Spin_8 = X_1$ (with the isomorphism induced by $p_1$).   

\vskip 5pt
More precisely, since a triality automorphism $\theta$ carries $\Spin_7^{[1]}$ to $\Spin_7^{[2]}$, it carries the irreducible quotients of 
$C^\infty_c(X_1)$ to that of $C^\infty_c(X_2)$, via:
\[  C^\infty_c(X_2) \cong C^\infty_c(X_1)^{\theta}. \]
 By Theorem \ref{thm:EvenOrthogonalMain}, the irreducible quotients of $C^\infty_c(X_1)$ are described in terms of the dual data 
\[ \begin{CD}
 \SO_3 \times \SL_2 @>>> \SO_8 @>f_1>> \PGSO_8  \end{CD} \]
whose restriction to $\SL_2$ corresponds to the partition $[5,1,1,1]$ (relative to $f_1$). In view of (c), the composition of this map  with $\theta$  gives a new morphism whose restriction to $\SL_2$ corresponds to the partition $[4,4]$ (relative to $f_1$).  This implies that the solution to the branching problem for the quantization of $M_2$ (i.e. $C^\infty_c(X_2)$) is given in terms of the initial data defining $M_1$. 

\vskip 5pt

For the dual problem,  the branching problem for the quantization of $M_1$ is the generalized Whittaker model attached to the partition $[4,4]$. By (c), this is carried by the triality automorphism $\theta$ to the branching problem for the generalized Whittaker model attached to $[5,1,1,1]$.  This latter branching problem has been addressed in Theorem \ref{thm:EvenOrthogonalMain} and its solution is given in terms of the dual data
\[   \Spin^{[1]}_7 \longrightarrow \Spin_8. \]
(More precisely, the irreducible quotients of the generalized Whittaker model attached to the partition $[5,1,1,1]$ are classical theta lifts of generic representations of ${\rm PGSp}_6$). 
The composition of the above dual data  with $\theta$ produces 
\[  \Spin_7^{[2]} \longrightarrow \Spin_8. \]
Thus the solution of the branching problem for the quantization of $M_1$  is given by the above map, which is the initial data for $M_2$. 
\vskip 5pt

\vskip 5pt

\subsubsection{\bf $[6,6]$ partition}\label{66partition} One expects the following, by the results of \cite{WZ}: 

\begin{thm2}\label{thm2:Orthogonal44}
Let
\begin{itemize}
    \item $M_1$ br the hyperspherical variety associated with the datum corresponding to a nilpotent orbit $\gamma$ of $\PGSO_{12}$ with partition $[6,6]$;\\
    
    \item $M_2$ be the (multiplicity-free \cite{Kn2}) half-spin representation $S$ of $\Spin_{12}$.
\end{itemize}
Then $M_1$ and $M_2$ are dual under the expected \cite{BZSV}-duality. 
 \end{thm2}

As before, the mathematical content  of this meta-theorem involves resolving the branching problems associated to quantization of $M_1$ and $M_2$, in terms of the data defining the other. 
Let us sketch how this can be done in this case.
\vskip 5pt

The problem of determining the irreducible quotients of the quantization of $M_1$, or of the generalised Whittaker representation $W_{\gamma,\psi}$ arising from the $[6,6]$ partition  is essentially resolved by the results in \cite[Section 9]{WZ}, in which the local multiplicity for this model is studied.   This is an instance of the ``strongly tempered" case, in conformity with the fact that the data defining $M_2$ is the pair
\[  ( {\rm id}: \Spin_{12} \longrightarrow \Spin_{12},  S). \]

\vskip 5pt

Therefore what remains is to investigate the dual problem, which concerns the quantization of $M_2$, or of the half-spin representation of $\Spin_{12}$. This is the pullback, via the half-spin representation, of the Weil representation $\omega_\psi$ obtained from ${\rm Mp}_{32}$. (Note that the metaplectic cover splits over $\Spin_{12}$, a manifestation of the fact that $M_2$ is anomaly-free (Remark \ref{rem:Anomaly}).) Let us continue to denote this representation of $\Spin_{12}$  by $\omega_\psi$. 
\vskip 5pt

To analyze the irreducible quotients of $\omega_{\psi}$ as a $\Spin_{12}$-module, we will  make use of the dual pair $(\SL_2, H) := (\SL_2, \Spin_{12})$ in the (simply-connected) exceptional group of type $E_7$, where $H$ is the derived subgroup of the Levi factor $L$ of a Heisenberg parabolic $P=LU$ of $E_7$. The unipotent radical $U$ is a Heisenberg group corresponding to a 32-dimensional symplectic space, on which $H$ acts via the half-spin representation.  From the description \cite[Proposition 43]{Ru} of the mixed model of the minimal representation $\Pi$ of $E_7$, in terms of the Weil representation $\omega_\psi$ obtained on $H$, one has: as a representation of $\Spin_{12}$,
\[ \omega_\psi\cong \Pi_{N,\psi},\] 
for $N$ a maximal unipotent subgroup of $\SL_2$. 

Hence we see that the irreducible quotients of $\omega_\psi\cong\Pi_{N,\psi}$ consists of lifts (via this exceptional theta correspondence) from $\psi$-generic representations of $\SL_2$, which exhibits the desired lifting of Expectation \ref{ex:relLanglandsQuantizationSmooth}. It remains to check the $\SL_2$-type of the lifted representations (they should correspond to the $[6,6]$ partition). That this is the case follows from \cite[Proof of Prop. 4.4, Pg.1239, line -4]{GaSa2}.
\vskip 5pt

\subsection{Symplectic groups} Recall from Theorem \ref{thm:SymplecticClassification} that the exceptional partitions which occur for the symplectic group $\Sp_{2n}$ are \[ [3,3], [5,5], [3,3,1^{2a}], [5,5,1^{2a}]. \]

\subsubsection{} The models related to the partitions \[ [3,3,1^{2a}], [5,5,1^{2a}] \] for $a>0$ are closely related to that for the partitions \[ [4,4], [6,6] \] in the orthogonal group case, by the theta correspondence and results of Gomez and Zhu \cite{GZ}, \cite{Zh} (as in Section \ref{sec:ThetaCorrespondence}). In particular, once one understands the models for $[4,4], [6,6]$, then one also understands the models for $[3,3,1^{2a}], [5,5,1^{2a}]$. This allows one to easily formulate conjectural expectations for their hyperspherical duals; some such examples may be found in Expectation \ref{ex:Exceptional} below. 

However, the corresponding dual problems seem to be much more difficult, and we are not yet aware of any way to resolve this case in complete generality. 

\subsubsection{\bf $[3,3]$ partition}  \label{SS:33}
Similar to the orthogonal group case above in subsection \ref{44partition}, one has:

\begin{thm2}\label{thm2:Symplectic33}
Let
\begin{itemize}
    \item $M_1$ be the hyperspherical variety for $\PGSp_6$ associated to the datum corresponding to a nilpotent orbit  $\gamma$ of $\PGSp_6$ with partition $[3,3]$;\\
    
    \item $M_2$ be the cotangent bundle of the  spherical variety 
    \[G_2 \bs \Spin_7.\] 
    
\end{itemize}
Then $M_1$ and $M_2$  are duals of each other  under the expected \cite{BZSV} duality.
 \end{thm2}

The hyperspherical variety $M_1$ has a quantization which is the generalised Whittaker representation $W_{\gamma,\psi}$. Working with the group $\PGSp_6$ instead, let us consider the dual pair $(G_2, \PGSp_6)$ in the (adjoint) exceptional group of type $E_7$. Let $\Pi$ be the minimal representation of $E_7$, and $(N,\psi)$ a Whittaker datum for the group of type $G_2$. By \cite[Proposition 11.5]{GaSa3}, we see that 
\[ W_{\gamma,\psi}\cong \Pi_{N,\psi}. \] 
Therefore, since the Howe duality (and functorial properties of the theta-lift) has been shown for this dual pair $(G_2, \PGSp_6)$ in \cite{GaSa3}, one sees that the irreducible quotients of the generalised Whittaker representation $W_{\gamma,\psi}\cong \Pi_{N,\psi}$ corresponding to the $[3,3]$ partition, consists of lifts (via this exceptional theta correspondence) of generic representations of $G_2$. In other words, the L-parameters of these irreducible quotients factor through the inclusion
\[  G_2 \longrightarrow \Spin_7 \]
which is the data defining $M_2$. 
\vskip 5pt

For the dual problem,  the quantization of $M_2=T^\ast(G_2\bs \Spin_7)$  is $C^\infty_c(G_2\bs \Spin_7)$. One needs to show that the irreducible quotients of this representation have A-parameters which factor through
\[  \SO_3 \times \SL_2 \longrightarrow \SO_3 \times \SO_3 \longrightarrow \PGSp_6. \]
This means that these irreducible representations of $\Spin_7$ should be  functorial lifts from $\SL_2$.  This expectation can in fact  be shown by the classical theta correspondence for $\SL_2 \times \SO_8$. 

More precisely, as we saw in subsection \ref{44partition}, the theta correspondence for $\SL_2 \times \SO_8$ shows that the irreducible quotients of $C^\infty_c(\SO_7 \backslash \SO_8)$ are given by theta lifts from generic representations of $\SL_2$. 
On the other hand, by item (b) in \S \ref{44partition},  we have  an embedding:
\[  p_2: \Spin_7 \hookrightarrow \SO_8   \]
inducing
\[  G_2 \bs \Spin_7 \cong  \SO_7 \bs \SO_8.    \]
Hence the irreducible constituents of 
\[  C^\infty_c(G_2 \bs \Spin_7) \cong  C^\infty_c(\SO_7 \bs \SO_8) \]
should be the irreducible representations of $\Spin_7$ obtained by restriction, via $p_2$, of the theta lifts of irreducible generic representations of $\SL_2$. 


\begin{rem}
This $[3,3]$ case is the only exceptional case for the orthogonal or symplectic groups (and one of two exceptional cases in general) which features in the classification of \cite{FU}. Similar to Section \ref{sec:DualitySymplecticReduction}, one has that the dual of their $\PGSp_6\times \PGL_2$-variety $M_1'$ is the 16-dimensional symplectic vector space $M_2'$ for $\SO_8\times \SL_2$, restricted to $\Spin_7\times \SL_2$. As in Proposition \ref{prop:Slice}, one has that the $\SL_2$-Whittaker reduction, or Poisson slice, of $M_2'$, is isomorphic to the cotangent bundle of $\SO_7\bs\SO_8$, which is in turn isomorphic to the cotangent bundle of $G_2\bs\Spin_7$, which is $M_2$. 

Finally, let us remark also in passing, that the final exceptional case of \cite{FU} is a duality between $M_1'$, the $G_2\times \SL_2$-variety corresponding to the 8-dimensional nilpotent orbit of $G_2$, and $M_2'$, the 14-dimensional symplectic vector space for $\SO_7\times \SL_2$, restricted to $G_2\times \SL_2$ (which is in fact anomalous). Again, as in Section \ref{sec:DualitySymplecticReduction} and the previous paragraph, one has an expected duality between 
\begin{itemize}
    \item $M_1$ the hyperspherical variety for $G_2$ associated to the datum corresponding to the 8-dimensional nilpotent orbit $\gamma$ of $G_2$; 
    \item $M_2$ the cotangent bundle of the  spherical variety 
    \[\SL_3 \bs G_2 (\cong \SO_6\bs \SO_7).\] 
\end{itemize}
The corresponding branching problems can be shown exactly as for the $[3,3]$ case in this subsection, using the exceptional theta correspondence for the dual pair $\PGL_3\times G_2$ (in $E_6$), and the classical theta correspondence for $\widetilde{\SL_2}\times \SO_7$ (restricted to $\widetilde{\SL_2}\times G_2$), respectively. The former has been completely understood by \cite{GaSa3}, and the latter by \cite{GaGu}. 
\end{rem}

\subsubsection{\bf $[5,5]$ partition} Similar to the orthogonal group case above in subsection \ref{66partition}, one has:

\begin{thm2}\label{thm2:Symplectic55}
Let
\begin{itemize}
    \item $M_1$ be the hyperspherical variety associated with a nilpotent orbit $\gamma$ of $\PGSp_{10}$ with partition $[5,5]$;\\
    
    \item  $M_2$ be the (multiplicity-free \cite{Kn2}) spin representation $S$ of $\Spin_{11}$.
\end{itemize}
Then $M_1$ and $M_2$  are duals of each other  under the expected \cite{BZSV}-duality.
 
\end{thm2}

Similar to subsection \ref{66partition}, the problem of determining the irreducible quotients of the generalised Whittaker representation $W_{\gamma,\psi}$ arising from the $[5,5]$ partition is essentially resolved by the results in \cite[Section 9]{WZ}, in which the local multiplicity for this model is studied. This is an instance of the ``strongly tempered" case, whose  solution is expressed in terms of the dual data
\[  ( {\rm id}: \Spin_{11} \rightarrow \Spin_{11}, S). \]

Therefore what remains is to study the dual problem, which concerns the quantization of the spin representation $S$ of $\Spin_{11}$. But since the spin representation of $\Spin_{11}$ is the half-spin representation for $\Spin_{12}$ restricted to $\Spin_{11}$, this quantization is just the Weil representation of ${\rm Mp}_{32}$ pulled back to $\Spin_{12}$ (which we studied above in \S  \ref{66partition}) and then further to $\Spin_{11}$.  

\subsection{\bf A non-even example}
Let us give another example of the \cite{BZSV}-duality which involves the case of a non-even nilpotent orbit.
\vskip 5pt

\begin{thm2}
Let

\begin{itemize}
\item $M_1$ be the hyperspherical variety associated with a  (non-even) nilpotent orbit $\gamma$ of $\PGSO_8$ with partition $[2,2,1,1,1,1]$;\\

\item $M_2$ be the cotangent bundle of the spherical variety
\[   G_2 \backslash \Spin_8. \]
\end{itemize}
Them $M_1$ and $M_2$ are dual under the expected \cite{BZSV}-duality.
\end{thm2}
\vskip 5pt

To justify this meta-theorem, we first consider the branching problem associated to the quantization of $M_1$, which is a generalized Whittaker representation $W_{\gamma, \triv, \psi}$. 
By the similitude theta correspondence for $\PGSO_8 \times \PGSp_6$, and applying  the result of Gomez-Zhu for this dual pair, one sees that the irreducible quotients of $W_{\gamma, \triv,\psi}$ are theta lifts of the irreducible quotients of the generalized Whittaker representation of $\PGSp_6$ associated with a nilpotent orbit with partition $[3,3]$. As we have alluded to in subsection \ref{SS:33}, the latter irreducible quotients are themselves theta lifts of generic representations of $G_2$, via the exceptional theta correspondence for $G_2 \times \PGSp_6$. This shows that the irreducible quotients of the quantization of $M_1$ are those whose A-parameters factor through the map
\[  G_2 \longrightarrow \Spin_8 \]
which is the datum defining $M_2$.
\vskip 5pt

Next we consider the branching problem arising from the quantization of $M_2$, so that we are interested in determining the irreducible quotients of  $C^{\infty}_c(G_2 \bs \Spin_8)$.
For this, we shall make use of the exceptional theta correspondence  for the dual pair 
\[  \SL_2^3/ \mu_2^{\Delta} \times \Spin_8   \]
in the adjoint group of type $E_7$.  As discussed in \cite{GaGo}, the irreducible quotients of $C^{\infty}_c(G_2 \bs \Spin_8)$ are theta lifts of generic representations of $\SL_2^3$. 
A study of this theta correspondence should show that the A-parameters of these theta lifts factor through the map 
\[  H \times \SL_2 = (\SL_2 \times_{\mu_2} \SL_2 \times_{\mu_2} \SL_2) \times \SL_2  \longrightarrow \PGSO_8. \]
This is the datum defining $M_1$. 
\vskip 5pt

\subsection{\bf Some speculations} 
We conclude this paper with some further speculations about the \cite{BZSV}-duality, arranging some examples in families.  

\begin{claim}\label{ex:Exceptional} 
    
One has the following tables of examples of hyperspherical dual pairs, ignoring (for clarity) all issues of isogeny and center: 

\begin{figure}[H]
    \begin{center}
\ytableausetup {mathmode,boxframe=normal,boxsize=1em,centertableaux} 
      \begin{tabular}{|c|c|c|} \hline
   $M$ & $\PGSp_6$, \begin{ytableau}  \quad &\quad &\quad \\  \quad &\quad &\quad  \end{ytableau} & $\PGSO_{8}$, \begin{ytableau}  \quad &\quad \\  \quad &\quad \\ \quad \\ \quad \\ \quad \\ \quad \end{ytableau}    \\ \hline
   $M^\vee$ & $T^\ast(G_2\bs \Spin_7)$ & $T^\ast(G_2\bs \Spin_8)$ \\ \hline
   \end{tabular}
      \end{center}
      \vspace{-1em}
\end{figure}

\begin{figure}[H]
    \begin{center} 
   \begin{tabular}{|c|c|c|c|} \hline
   $M$ & $\PGSO_8$, \begin{ytableau} \quad & \quad &\quad &\quad \\ \quad & \quad &\quad &\quad  \end{ytableau} & $\PGSp_{8}$, \begin{ytableau} \quad & \quad &\quad \\ \quad & \quad &\quad \\ \quad \\ \quad  \end{ytableau} &$\PGSO_{10}$, \begin{ytableau} \quad & \quad \\ \quad & \quad \\ \quad \\ \quad \\ \quad \\ \quad \\ \quad \\ \quad  \end{ytableau}   \\ \hline
   $M^\vee$ & $T^\ast(\Spin_7\bs \Spin_8)$ & $T^\ast(\Spin_7\bs \Spin_9)$ & $T^\ast(\Spin_7\bs \Spin_{10})$ \\ \hline
   \end{tabular}

         \end{center}
               \vspace{-1em}
\end{figure}

\begin{figure}[H]
    \begin{center}

      \begin{tabular}{|c|c|} \hline
   $M$ & $\PGSO_{10}$, \begin{ytableau}  \quad &\quad &\quad &\quad \\ \quad & \quad &\quad &\quad \\ \quad \\ \quad \end{ytableau}     \\ \hline
   $M^\vee$ & $\Spin_{10}$, \text{spin rep}  \\ \hline
   \end{tabular} 
      \end{center}
\end{figure}
 \vspace{-1em}
   \begin{figure}[H]
    \begin{center}
   \begin{tabular}{|c|c|} \hline
   $M$ & $\PGSp_{10}$, \begin{ytableau}  \quad & \quad &\quad &\quad &\quad \\ \quad &\quad & \quad &\quad &\quad \end{ytableau}     \\ \hline
   $M^\vee$ & $\Spin_{11}$, \text{spin rep}  \\ \hline
   \end{tabular} 
        \end{center}
\end{figure}
 \vspace{-1em}
   \begin{figure}[H]
    \begin{center} 
   \begin{tabular}{|c|c|} \hline
   $M$ & $\PGSO_{12}$, \begin{ytableau}  \quad &\quad & \quad &\quad &\quad &\quad \\ \quad &\quad &\quad & \quad &\quad &\quad \end{ytableau}     \\ \hline
   $M^\vee$ & $\Spin_{12}$, \text{half-spin rep}  \\ \hline
   \end{tabular}

   \end{center}
\end{figure}

(The occurrence of a Young tableaux indicates the Whittaker induction or generalised Whittaker model corresponding to the nilpotent orbit with that Young tableaux.)
\end{claim}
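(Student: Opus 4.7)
The overarching strategy is to reduce every pair in Expectation \ref{ex:Exceptional} to a case already treated in the paper by combining three tools: (a) the Gomez--Zhu transfer of generalised Whittaker models along the classical theta correspondence (Proposition \ref{prop:MainGomezZhu}), (b) triality in type $D_4$, and (c) exceptional theta correspondences inside $E_6, E_7, E_8$. The key combinatorial remark is that, within each row of each table, the partition is obtained from a previously studied partition by the ``remove first column, add rows of length one'' operation of Figure \ref{fig:TransferNilpOrbits}, i.e.\ by the moment-map transfer of Proposition \ref{prop:TransferNilpOrbit}; this is exactly the combinatorial manifestation of tool (a).

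For the $G_2$-family (Table 1), the pair $(\Sp_6,[3,3])$ is Theorem$^\prime$ \ref{thm2:Symplectic33}. For $(\SO_8,[2,2,1^4])$, apply Proposition \ref{prop:MainGomezZhu} to the similitude theta correspondence $\PGSO_8\times \PGSp_6$ to identify the generalised Whittaker model on $\PGSO_8$ with that for $[3,3]$ on $\PGSp_6$, which is already known to be a theta-lift of generic $G_2$-representations by \S\ref{SS:33}; this supplies the factorisation through $G_2\to\Spin_8$. The dual problem, concerning $C^\infty_c(G_2\bs\Spin_8)$, is treated by the exceptional theta for $\SL_2^3/\mu_2^\Delta\times \Spin_8\subset E_7$ via the minimal representation of $E_7$: the irreducible quotients are theta-lifts of generic representations of $\SL_2^3$, and one checks that their $\SL_2$-types match the partition $[2,2,1^4]$.

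For the $\Spin_7$-family (Table 2), the pair $(\SO_8,[2,2,2,2])$ is approached by triality in $D_4$, in exact parallel with \S\ref{44partition}: the orbit $[2,2,2,2]$ lies in the same $12$-dimensional triality-orbit as $[3,1^5]$, while $\Spin_7\bs\Spin_8$ is a twisted form of the varieties $X_j$, so transporting Theorem \ref{thm:EvenOrthogonalMain} (applied to the hook-type partition $[3,1^5]$) by the triality automorphism $\theta$ of $\Spin_8$ produces the desired duality. The entries $(\Sp_8,[3,3,1,1])$ and $(\SO_{10},[2,2,1^6])$ are then deduced from this $\SO_8$-case by iterating Proposition \ref{prop:MainGomezZhu} along the classical theta correspondences $\SO_8\times\Sp_8$ and $\Sp_8\times\SO_{10}$: on the Whittaker side the Young-tableau transfer of Figure \ref{fig:TransferNilpOrbits} delivers exactly the listed partitions, and on the dual side the spherical varieties $\Spin_7\bs\SO_n$ for $n=8,9,10$ form a compatible chain under the corresponding theta lifts.

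For the spin-representation family (Table 3), the entries $(\Sp_{10},[5,5])$ and $(\SO_{12},[6,6])$ are already established in Theorem$^\prime$ \ref{thm2:Symplectic55} and in \S\ref{66partition}, using \cite[\S9]{WZ} and the exceptional theta for the Heisenberg parabolic in $E_7$. For the new pair $(\SO_{10},[4,4,1,1])\leftrightarrow(\Spin_{10},\mathrm{spin})$, Proposition \ref{prop:MainGomezZhu} applied to the dual pair $\SO_{10}\times\Sp_{10}$ transfers the generalised Whittaker model for $[4,4,1,1]$ to that for $[5,5]$ on $\Sp_{10}$, whose spectral decomposition is then provided by Theorem$^\prime$ \ref{thm2:Symplectic55}; dually, the spin representation of $\Spin_{10}$ is the restriction of the spin representation of $\Spin_{11}$, whose quantisation was analysed in \S\ref{66partition}. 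The principal obstacle is not any individual computation but the bookkeeping: one must propagate the A-parameter description faithfully through each reduction chain, verifying that the $\SL_2$-type of the resulting A-parameter is the Jacobson--Morozov $\SL_2$ attached to the starting nilpotent orbit; for the triality step in Table 2, one must also explicitly identify the $S_3$-action on the $12$-dimensional $D_4$-orbits and on the conjugacy classes of $\Spin_7$-type subgroups of $\Spin_8$, along the lines of the triality analysis in \S\ref{44partition}.
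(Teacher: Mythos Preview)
Your overall strategy---reduce each entry via Gomez--Zhu to a leftmost ``anchor'' case already treated in \S\ref{sec:ExceptionalPartitions}, and handle the anchors by triality or exceptional theta---is exactly the heuristic the paper offers in the Remark following Expectation \ref{ex:Exceptional}. Note, however, that the paper does \emph{not} claim a proof here: the statement is deliberately labelled an Expectation, and the only justification given is that brief Remark (Gomez--Zhu links within each table, the reference to Ginzburg \cite{Gi} for the $\SO_{10}$ spin case, and the observation that these exhaust the relevant exceptional spherical varieties in \cite{KVS}). So your proposal is more ambitious than what the paper attempts.

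There is a concrete error in your Table 2 analysis. The first Young tableau in that table has two rows of four boxes, i.e.\ the partition is $[4,4]$, not $[2,2,2,2]$. Consequently your triality argument is mis-calibrated: the correct anchor is the $[4,4]$ case of \S\ref{44partition}, where triality exchanges $[4,4]$ with the hook-type $[5,1,1,1]$ (both $18$-dimensional orbits in $D_4$), not $[2,2,2,2]$ with $[3,1^5]$ (both $12$-dimensional). The dual side $T^\ast(\Spin_7\bs\SO_8)$ is then literally Theorem$^\prime$ \ref{thm2:Orthogonal44}, and the remaining Table 2 entries $[3,3,1,1]$ on $\Sp_8$ and $[2,2,1^6]$ on $\SO_{10}$ follow from this $[4,4]$ anchor by successive Gomez--Zhu transfers, as you correctly describe. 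With this correction your sketch aligns with the paper's intended (but unproven) picture; the dual problems for $\Spin_7\bs\SO_9$ and $\Spin_7\bs\SO_{10}$, and the precise bookkeeping of $\SL_2$-types you flag at the end, are precisely the parts the paper leaves open.
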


\begin{rem}
    Several remarks: \begin{itemize}
        \item Within each table, the models are related by the result of Gomez-Zhu (Proposition \ref{prop:MainGomezZhu}), moving from left to right. The choice of $S$ should be dictated according to this result. 
        \item Note that the example involving the (multiplicity-free) spin representation for $\SO_{10}$ comes from the work of Ginzburg \cite{Gi} studying the Spin L-function of ${\rm GSO}_{10}$. 
        \item Finally, the above tables account for essentially all of the `exceptional' spherical varieties (to do with low-rank orthogonal groups) which occur in the classification of spherical varieties \cite{KVS}. 

    \end{itemize}
\end{rem}

\subsubsection{A family of dual pairs from spin representations} Let us now examine how the examples in the last three tables of Expectation \ref{ex:Exceptional} can in fact be seen to fit into a larger family of examples.

For $0\le j\le 5$, let $M_j^\vee$ denote the restriction of the half-spin representation of $\Spin_{12}$ to its subgroup \[ G_j^\vee := \Spin_{12-j}\times \Spin_j. \] Note that $M_j^\vee$ is hence either the tensor product of spin reps (when $j$ is odd), or the (sum of) tensor products of half-spin reps (when $j$ is even). 

Now we have the following expectation, or meta-theorem:

\begin{thm2}
When $j$ is even, then $G_j = \PGSO_{12-j} \times \PGSO_j$. 

Let $\gamma$ be the nilpotent orbit of $\PGSO_{12-j}$ with partition $[6-j, 6-j, 1^j]$; then $M_\gamma = (\GSp_2 \times \GSO_j)^{\text{sim}}/\mathbb{G}_m$. Here $(\GSp_2 \times \GSO_j)^{\text{sim}}$ consists of the pairs of elements of $\GSp_2$ and $\GSO_j$ with the same similitude character. 

Then the hyperspherical dual $M_j$ is defined by the hyperspherical datum \[  (\GSp_2 \times \GSO_j)^{\text{sim}}/\mathbb{G}_m \times \SL_2 \rightarrow  \PGSO_{12-j} \times \PGSO_j \] (and trivial $S$), where the map into the $\PGSO_{12-j}$ factor is as usual (defined by $\gamma$) while the map into the $\PGSO_j$ factor is by projection. 

Similarly, when $j$ is odd, then $G_j = \PGSp_{11-j} \times \PGSp_{j-1}$. 

Let $\gamma$ be the nilpotent orbit of $\PGSp_{11-j}$ with partition $[6-j, 6-j, 1^{j-1}]$; then $M_\gamma = (\GSp_2 \times \GSp_{j-1})^{\text{sim}}/\mathbb{G}_m$. 

Then the hyperspherical dual $M_j$ is defined by the hyperspherical datum \[  (\GSp_2 \times \GSp_{j-1})^{\text{sim}}/\mathbb{G}_m \times \SL_2 \rightarrow  \PGSp_{11-j} \times \PGSp_{j-1} \] (and trivial $S$), where the map into the $\PGSp_{11-j}$ factor is as usual (defined by $\gamma$) while the map into the $\PGSp_{j-1}$ factor is by projection.

(In informal terms, while the cases in \cite{FU} considers the residual action of the entire $M_\gamma$, and the cases we have mainly considered so far have `reduced away' this residual action of $M_\gamma$, here we are considering the `partial' residual action of a \textit{factor} of $M_\gamma$.)
\end{thm2}

Note that when $j=3$ or $4$, we are essentially considering the nilpotent orbits with partitions $[3,3,1,1]$ and $[2,2,1,1,1,1]$ respectively, which have already featured in some of our other examples above. When $j=0,1,2$, we of course essentially recover the last three examples of Expectation \ref{ex:Exceptional} respectively. We will mention the $j=5$ case below. 
\vskip 5pt

On one hand, the branching problems associated to the quantization of $M_j^\vee$ can in principle be resolved by the exceptional theta correspondence for the dual pair \[ (\SL_2 \times_{\mu_2} \Spin_j, \Spin_{12-j}) \] in the exceptional group of type $E_7$. 

Indeed, continue the notation of \S \ref{66partition}; then the quantization of $M_j^\vee$ is the representation $\omega_\psi$ further restricted to $\Spin_{12-j}\times \Spin_j$. As in \S \ref{66partition}, one has: as a representation of $\Spin_{12-j}\times \Spin_j$,
\[ \omega_\psi\cong \Pi_{N,\psi},\] 
for $N$ a maximal unipotent subgroup of $\SL_2$. 

Suppose now Howe duality holds for the dual pair $(\SL_2 \times_{\mu_2} \Spin_j, \Spin_{12-j})$; then one has the irreducible quotients of $\Pi$ are in general of the form \[ \pi \boxtimes \sigma \boxtimes \Theta(\pi\boxtimes \sigma) \] for $\pi\in\Irr(\SL_2)$ and $\sigma\in\Irr(\Spin_j)$. 

It follows then that the irreducible quotients of $\omega_\psi\cong \Pi_{N,\psi}$ are representations of $\Spin_j\times \Spin_{12-j}$ consisting in general of lifts from $\SL_2\times_{\mu_2} \Spin_j$, and whose A-parameters should factor through the defining datum of $M_j$ \[ (\GSp_2 \times \GSO_j)^{\text{sim}}/\mathbb{G}_m \times \SL_2 \rightarrow  \PGSO_{12-j} \times \PGSO_j \] (if $j$ is even, and similarly if $j$ is odd), recalling also how this map was defined above.  
\vskip 5pt

On the other hand, the branching problems associated to the quantization of $M_j$ are of strongly-tempered type. As mentioned, we have already considered several cases above. Of particular note is the extreme case $j=5$, in which case the corresponding nilpotent orbit $\gamma$ is trivial, and $M_j$ would be (the cotangent bundle of) $\big((\GSp_2 \times \GSp_{4})^{\text{sim}}/\mathbb{G}_{m} \big)\bs \big(\PGSp_{6} \times \PGSp_{4}\big)$, which is a strongly tempered spherical variety \cite{WZ}.

\end{document}